\newcommand{\cU}{\mathcal{U}}
\newcommand{\bZ}{\mathbb{Z}}
\newcommand{\bG}{\mathbb{G}}
\newcommand{\cO}{\mathcal{O}}
\newcommand{\bP}{\mathbb{P}}
\newcommand{\bA}{\mathbb{A}}
\newcommand{\spec}{\operatorname{Spec}}
\newcommand{\Gr}{\operatorname{Gr}}
\newcommand{\Pic}{\operatorname{Pic}}
\newcommand{\diag}{\operatorname{diag}}
\newcommand{\GL}{\operatorname{GL}}
\newcommand{\Hilb}{\operatorname{Hilb}}
\newcommand{\Aut}{\operatorname{Aut}}
\newcommand{\PGL}{\operatorname{PGL}}
\newcommand{\Id}{\operatorname{Id}}
\newcommand{\sgn}{\operatorname{sgn}}
\newtheorem{theorem}{Theorem}[section]
\newtheorem{Def}[theorem]{Definition}
\newtheorem{Teo}[theorem]{Theorem}
\newtheorem*{TeoA}{Theorem A.5}
\newtheorem{Lemma}[theorem]{Lemma}
\newtheorem{Oss}[theorem]{Observation}
\newtheorem{Prop}[theorem]{Proposition}
\newtheorem{notations}[theorem]{Notation}
\newtheorem{Rmk}[theorem]{Remark}
\begin{document}
\subjclass[2010]{Primary 14M10, 14C22; Secondary 14D23.}
\title{The Picard Group of the Moduli of Smooth Complete Intersections of Two Quadrics}
\author{Shamil Asgarli and Giovanni Inchiostro}
\begin{abstract} We study the moduli space of smooth complete intersections of two quadrics in $\mathbb{P}^n$ by relating it to the geometry of the singular members of the corresponding pencils. By giving an alternative presentation for the moduli space of complete intersections, we compute the Picard group for all $n\geq 3$.
\end{abstract}

\maketitle 

\section{Introduction}\label{Section:introduction}

Given a scheme $X$ and an algebraic group $G$ that acts on $X$, one might be interested in studying the Chow ring $A^{*}([X/G])$. The notion of integral Chow ring for a smooth quotient stack was introduced by Edidin-Graham in \cite{EdGr}, where they developed the general framework of equivariant intersection theory. Some explicit examples have been computed in \cite{VisM2}, \cite{EdFulNodal}, and \cite{EdFulHyper}. More recently, in \cite{FV} the authors treat the case when $X$ is the moduli space of smooth hypersurfaces of degree $d$ in $\bP^{n}$, and $G=\GL_{n+1}$. In particular, they completely determine the Chow ring of the space of smooth plane cubics. 

A natural case to consider is the stack of complete intersections of two quadrics in $\bP^{n}$. With a view toward understanding the full equivariant Chow ring, we determine the first graded piece, namely the Picard group:
\begin{Teo}\label{Teo:answer:pic} We have 
$$
\Pic \left(\left\{\begin{matrix} \text{Complete intersections} \\ \text{of two quadrics in }\mathbb{P}^n
\end{matrix} \right\}\right ) \cong
\begin{cases}
\bZ/n\bZ &\text{if } n \text{ is even} \\
\bZ/2n\bZ &\text{if } n\equiv 1 \text{ (mod } 4) \\
\bZ/4n\bZ &\text{if } n\equiv 3 \text{ (mod } 4)
\end{cases} 
$$
\end{Teo}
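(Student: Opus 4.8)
The plan is to replace the tautological description of the moduli stack $\mathcal M$ in the statement---the open substack of $[\Gr(2,\Sym^2 V^*)/\PGL_{n+1}]$ (with $V=\bC^{n+1}$) parametrizing pencils of quadrics with smooth base locus, which for $n\ge 3$ coincides with the moduli of smooth complete intersections of two quadrics because such an intersection is cut out by a unique pencil---with a presentation as a quotient of an open subset of an affine space. Realizing $\Gr(2,\Sym^2 V^*)$ as the quotient of $\{(Q_1,Q_2)\text{ linearly independent}\}\subseteq(\Sym^2 V^*)^2$ by the free $\GL_2$-action on ordered bases, and noting that the one-parameter subgroup $\bG_m\hookrightarrow\GL_{n+1}\times\GL_2$, $\mu\mapsto(\mu\Id_{n+1},\mu^2\Id_2)$, acts trivially on $(\Sym^2 V^*)^2$, I would prove
\[
\mathcal M\;\cong\;[W/G],\qquad G:=(\GL_{n+1}\times\GL_2)/\bG_m,
\]
where $W\subseteq(\Sym^2 V^*)^2$ is the locus of pairs whose common zero scheme is a smooth complete intersection of dimension $n-2$; one checks that $G$ sits in an extension $1\to\GL_2\to G\to\PGL_{n+1}\to 1$ and that $[W/G]$ recovers the original presentation.

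The advantage is that $W$ is the complement of a single hypersurface. By the classical criterion, a pencil $\langle Q_1,Q_2\rangle$ has smooth base locus of the expected dimension if and only if the binary form $\det(\mu M_{Q_1}+\nu M_{Q_2})$ of degree $n+1$ has $n+1$ distinct roots (here $M_Q$ denotes the symmetric matrix of $Q$); since distinct roots force $Q_1,Q_2$ independent,
\[
W\;=\;(\Sym^2 V^*)^2\setminus\mathfrak D,\qquad \mathfrak D=\{F=0\},\quad F(Q_1,Q_2):=\operatorname{disc}_{[\mu:\nu]}\!\bigl(\det(\mu M_{Q_1}+\nu M_{Q_2})\bigr),
\]
with no contribution from higher-codimension strata; moreover $\mathfrak D$ is $G$-invariant and $F$ is a $G$-semi-invariant.

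The computation is then a short equivariant excision. As $(\Sym^2 V^*)^2$ is a representation of the connected group $G$, one has $\Pic\bigl[(\Sym^2 V^*)^2/G\bigr]=\widehat G$, and $\widehat G=\{\det_{n+1}^{\,a}\otimes\det_2^{\,b}:(n+1)a+4b=0\}$ is infinite cyclic with primitive generator $\chi_0=\det_{n+1}^{\,4/d}\otimes\det_2^{\,-(n+1)/d}$, where $d:=\gcd(n+1,4)$. The localization sequence gives
\[
\Pic(\mathcal M)\;=\;\Pic[W/G]\;=\;\widehat G\big/\bigl\langle[\mathcal O(\mathfrak D)]\bigr\rangle,
\]
and $[\mathcal O(\mathfrak D)]\in\widehat G$ is the character by which $F$ transforms. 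I would read this off from the covariance properties of the discriminant of a binary form: under $g\in\GL_{n+1}$ one has $M_{Q_i}\mapsto(g^{-1})^{\mathsf T}M_{Q_i}g^{-1}$, so $\det(\mu M_{Q_1}+\nu M_{Q_2})$ is multiplied by $(\det g)^{-2}$, whence $F$---homogeneous of degree $2(n+1)-2=2n$ in the coefficients of that binary form---is multiplied by $(\det g)^{-4n}$; under $h\in\GL_2$ the binary form is precomposed with a linear substitution of determinant $\det h$, so $F$ is multiplied by $(\det h)^{n(n+1)}$. Hence $[\mathcal O(\mathfrak D)]=\det_{n+1}^{\,-4n}\otimes\det_2^{\,n(n+1)}$, which is $\pm nd$ times $\chi_0$, so $\bigl\langle[\mathcal O(\mathfrak D)]\bigr\rangle=nd\,\widehat G$.

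Therefore $\Pic(\mathcal M)\cong\bZ/nd\,\bZ$, provided one knows that $\mathfrak D$ is irreducible and reduced---for only then is $\bigl\langle[\mathcal O(\mathfrak D)]\bigr\rangle$ generated by $nd\,\chi_0$ rather than by a proper divisor of it---and I expect this irreducibility to be the main obstacle. I would establish it by describing the generic member of $\mathfrak D$: a general non-regular pencil is simply tangent, at a single quadric of corank one, to the smooth locus of the determinantal hypersurface $\{\det=0\}\subset\bP(\Sym^2 V^*)$; the incidence variety of pairs (point of $\{\det=0\}^{\mathrm{sm}}$, tangent line there) fibers over the irreducible $\{\det=0\}^{\mathrm{sm}}$ with irreducible linear fibers, so its image in $\Gr(2,\Sym^2 V^*)$---and the pullback of that along the $\GL_2$-bundle $W\to\Gr$---is irreducible, while the simple-tangency picture shows $F$ vanishes to order one there. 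Granting this, $d=\gcd(n+1,4)$ equals $1$, $2$, $4$ according as $n$ is even, $n\equiv1\pmod 4$, or $n\equiv3\pmod 4$, so $nd$ equals $n$, $2n$, $4n$ respectively---exactly the asserted answer.
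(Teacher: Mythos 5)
Your proposal is correct, but it takes a genuinely different route from the paper. The paper never works with the full affine space $(\Sym^2 V^*)^2$; instead it passes to the \emph{diagonal slice} $W\subseteq\bA^{2n+2}$ of simultaneously diagonalized pairs, acted on by a quotient $G_{-2}$ of $(\GL_2\times\bG_m^{n+1})\rtimes S_{n+1}$, and the bulk of the work goes into proving $[\cU/\PGL_{n+1}]\cong[W/G_{-2}]$ (a bijectivity-plus-automorphism-count argument relying on Reid's simultaneous diagonalization, the computation that a complete intersection has exactly $2^n$ automorphisms over a given configuration of singular members, and a Zariski-main-theorem criterion for stacks proved in the appendix); the Picard group is then read off from the characters of $G_{-2}$ modulo the character of the single basic invertible function $\prod_{i<j}(x_iy_j-x_jy_i)$, whose equation and weight are completely explicit. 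You instead keep the tautological presentation $[W'/\Gamma]$ with $W'\subseteq(\Sym^2V^*)^2$ and $\Gamma=(\GL_{n+1}\times\GL_2)/\bG_m$ (this is literally the group $\Gamma$ and the identification $[\mathcal{F}/\Gamma]\cong[\cU/\PGL_{n+1}]$ that the paper only introduces later, in the hyperelliptic section), so that the entire isomorphism machinery of Sections 4--5 is bypassed; the price is that the boundary is no longer a union of transparent hypersurfaces but the single discriminant divisor $\mathfrak D$, whose irreducibility and generic reducedness become the crux. Your character arithmetic is right: $\widehat\Gamma=\{(a,b):(n+1)a+4b=0\}\cong\bZ$, the discriminant transforms by $\det_{n+1}^{-4n}\otimes\det_2^{n(n+1)}=-nd\,\chi_0$ with $d=\gcd(4,n+1)$, matching the paper's $\gcd(2k,n+1)$ at $k=-2$.

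One step you should not leave as a one-liner is the assertion that $F$ vanishes to order one along $\mathfrak D$. It is true, but it is more delicate than ``simple tangency'' suggests: the restriction of $F$ to the diagonal slice is the perfect square $c\prod_{i<j}(a_ib_j-a_jb_i)^2$, and a perturbation of the pencil that moves only the non-degenerate members (or stays in the diagonal slice) makes the two colliding roots separate like $\pm\sqrt{\epsilon}$ in one affine chart while the leading coefficient compensates, so several natural test curves meet $\mathfrak D$ with multiplicity two. To get order one you must perturb the corank-one quadric $Q_0$ itself in a direction $R$ with $v^TRv\neq0$, where $v$ spans $\ker Q_0$; then exactly one pair of roots collides and the discriminant of binary forms (which is reduced at a form with a single double root) pulls back with multiplicity one. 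With that computation supplied, and the stratification argument showing that lines through corank-$\ge 2$ quadrics and higher tangencies all have codimension $\ge 2$ (so $\mathfrak D$ is irreducible), your proof is complete and gives the stated answer.
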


An important ingredient in our work is the connection between the space of complete intersections of two quadrics in $\mathbb{P}^n$ and the binary forms of degree $n+1$. Namely, one can associate to the complete intersection $X=Q_1 \cap Q_2$, the binary form $\det(x_0Q_1+x_1Q_2)$. 
It turns out that $X$ is smooth if and only if $\det(x_0Q_1+x_1Q_2)$ has distinct roots (see \cite{Reid}).
This construction can be 
made functorial, leading to a morphism between moduli spaces 
$$\Phi:\left\{\begin{matrix} \text{Complete intersections} \\ \text{of two quadrics in }\mathbb{P}^n
\end{matrix} \right \} \to \left\{\begin{matrix} \text{Binary forms of degree }n+1 \\ \text{with distinct roots}
\end{matrix} \right\}$$

It was already known to A. Cauchy and C. Jacobi that if $Q_1 \cap Q_2$ is smooth, then there is a basis in which $Q_1$ and $Q_2$ are simultaneously diagonal, i.e. $Q_1= \sum_{i=0}^n a_{i}x_i^2$ and $Q_2= \sum _{i=0}^n b_{i}x_i^2$. These coefficients can be packaged into the space $W \subseteq \mathbb{A}^{2n+2}$:
$$
W:=\left\{\begin{pmatrix} a_{0} & a_{1} &\cdots &a_{n}\\ b_{0} & b_{1} &\cdots & b_{n} 
\end{pmatrix}: \text{for every } i \neq j, \ a_{i}b_{j} \neq
 a_{j}b_{i} \right\}
$$
where the condition $\ a_{i}b_{j} \neq
 a_{j}b_{i}$ is imposed by the smoothness of $Q_1 \cap Q_2$ (Lemma \ref{Singular:complete:intersection}).
We refer to $W$ as the \textbf{diagonal slice}. 

For each non-zero integer $k$ we define groups $G_k$ which act on the diagonal slice. These groups will be quotients of $(\GL_2 \times \mathbb{G}_m^{n+1}) \rtimes S_{n+1}$ (see Section \ref{section:diagonal:slice}). For $a|b$ there is a homomorphism $G_b \to G_a$ which makes the identity map $W\to W$ equivariant. This gives rise to a morphism $[W/G_{b}] \to [W/G_{a}]$. The relevance of this construction lies in Theorem \ref{Teo:introduction:diagram}: 

\begin{Teo}\label{Teo:introduction:diagram}There are isomorphisms
$$ [W/G_{-2}] \cong \left\{\begin{matrix} \text{Complete intersections} \\ \text{of two quadrics in }\mathbb{P}^n
\end{matrix} \right \} \text{ and } \text{ } [W/G_{-1}] \cong \left\{\begin{matrix} \text{Binary forms of degree }n+1 \\ \text{with distinct roots}
\end{matrix} \right\}$$
such that the following diagram commutes:
$$
\xymatrix{
{\left\{\begin{matrix} \text{Complete intersections} \\ \text{of two quadrics in }\mathbb{P}^n
\end{matrix} \right \}} \ar[r]^-{\Phi} & {\left\{\begin{matrix} \text{Binary forms of degree }n+1 \\ \text{with distinct roots}
\end{matrix} \right\}} \\
[W/G_{-2}] \ar[r] \ar[u] & [W/G_{-1}] \ar[u]
}
$$
\end{Teo}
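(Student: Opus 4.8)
Write $\mathcal{M}_1$ and $\mathcal{M}_2$ for the two moduli stacks in the statement: an object of $\mathcal{M}_1$ over $S$ is a closed subscheme $X\subseteq\bP^n_S$, flat over $S$, with geometric fibers smooth complete intersections of two quadrics (equivalently, $\mathcal{M}_1$ is the quotient by $\PGL_{n+1}$ of the open locus of the Grassmannian of pencils of quadrics in $\bP^n$ with base scheme smooth of codimension $2$), and an object of $\mathcal{M}_2$ over $S$ is a family of degree-$(n+1)$ binary forms that is fiberwise of nonvanishing discriminant. The plan is as follows. A point $(a,b)\in W$ tautologically produces the family $X_{(a,b)}=\{\sum_i a_ix_i^2=\sum_i b_ix_i^2=0\}\subseteq\bP^n_W$ and the binary form $\prod_i(a_ix_0+b_ix_1)=\det\!\big(x_0\diag(a_i)+x_1\diag(b_i)\big)$; carried along with the $G_{-2}$- and $G_{-1}$-equivariant structures built in Section \ref{section:diagonal:slice}, these descend to morphisms $[W/G_{-2}]\to\mathcal{M}_1$ and $[W/G_{-1}]\to\mathcal{M}_2$, and the square in the statement commutes because on $W$ both composites are ``$(a,b)\mapsto\prod_i(a_ix_0+b_ix_1)$''. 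It therefore suffices to show that each of these two morphisms is an equivalence of stacks, i.e. fully faithful and essentially surjective étale-locally on the base.

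Essential surjectivity for $\mathcal{M}_1$ is a relative form of the Cauchy--Jacobi simultaneous-diagonalization theorem and is the geometric heart of the argument. Let $X/S$ be a family as above. For $n\geq3$ one has $h^0(\bP^n,\mathcal{I}_X(2))=2$ on every fiber, and this is locally constant, so $p_*\mathcal{I}_{X/S}(2)\subseteq p_*\mathcal{O}_{\bP^n_S}(2)$ is a rank-$2$ subbundle --- the pencil of quadrics through $X$ --- and $\Phi$ sends $X$ to the family of binary forms $\det(x_0Q_1+x_1Q_2)$, which has nonvanishing discriminant by Lemma \ref{Singular:complete:intersection}. Hence the scheme of roots of this form is finite étale of degree $n+1$ over $S$; after the corresponding étale base change it splits into $n+1$ disjoint sections, each cutting out a corank-$1$ member of the pencil and therefore a line subbundle of the ambient rank-$(n+1)$ bundle (its radical). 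Because the roots are distinct, a two-line linear-algebra identity shows these $n+1$ line subbundles are pairwise orthogonal with respect to both $Q_1$ and $Q_2$; since on each fiber the $n+1$ vertices of a smooth pencil span $\bP^n$, they span the whole bundle as well. Thus the bundle is the direct sum of the line subbundles, $Q_1$ and $Q_2$ are simultaneously diagonal in this splitting, and trivializing the line subbundles on a further étale cover yields the desired map $S\to W$. The analogous statement for $\mathcal{M}_2$ is the easier fact that a binary form of degree $n+1$ with nonvanishing discriminant factors as an ordered product of linear forms after the étale cover that orders its roots.

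For full faithfulness, let $w_1,w_2\in W(S)$ and let $g\in\PGL_{n+1}(S)$ be an isomorphism $X_{w_1}\xrightarrow{\sim}X_{w_2}$, i.e. $g(X_{w_1})=X_{w_2}$. Then $g$ carries the functorially recovered pencil of $X_{w_1}$ to that of $X_{w_2}$, inducing an isomorphism of the parametrizing $\bP^1$'s; absorbing this into a change of basis of the pencil (an element of $G_{-2}$) we may assume $g$ respects the two pencils and their parametrizations, so it matches the corresponding root schemes and hence permutes the (ordered) vertex line bundles; absorbing this permutation into a coordinate permutation (again in $G_{-2}$) we reduce to the case where $g$ fixes the pencil with its parametrization and each vertex line bundle, which forces $g$ to be a diagonal matrix --- once more an element of $G_{-2}$. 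This identifies $\mathrm{Isom}_{[W/G_{-2}]}(w_1,w_2)$ with $\mathrm{Isom}(X_{w_1},X_{w_2})$ as étale sheaves on $S$; taking $w_1=w_2$, it is the classical statement that the group of projective automorphisms of a smooth complete intersection of two quadrics is the group of coordinate sign changes extended by the group of projectivities of the pencil fixing its $n+1$ branch points --- and the definition of $G_{-2}$ is arranged exactly so that the stabilizer in $G_{-2}$ of a point of $W$ is this group. The same discussion, with ``roots'' in place of ``pencil and vertices'', handles $\mathcal{M}_2$ and $G_{-1}$.

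Combining the two previous paragraphs, $[W/G_{-2}]\to\mathcal{M}_1$ and $[W/G_{-1}]\to\mathcal{M}_2$ are equivalences, and they are compatible with the morphisms down from $W$ and with $\Phi$, which is the assertion of the theorem. I expect the genuine work to lie less in any single step than in two places: making the diagonalization of the second paragraph truly relative (here the clean orthogonality identity makes this manageable), and, above all, the bookkeeping of Section \ref{section:diagonal:slice} that pins down $G_{-2}$ and $G_{-1}$ --- keeping precise track of the central tori coming from rescaling the two quadrics and from the scalar matrices inside $\GL_{n+1}$, whose weights on $W$ differ by the factor $2$ responsible for the difference between the two groups.
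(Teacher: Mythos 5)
Your proposal is correct in outline but follows a genuinely different route from the paper. The paper never argues at the level of families: after constructing the equivariant maps on atlases (Section \ref{section:main:diag}), it proves $[W/G_{-1}]\cong[U^\circ/\PGL_2]$ by factoring through $[M_{0,n+1}/S_{n+1}]$ via iterated quotients (Lemma \ref{Lemma:double:quot}) with GIT supplying properness of the torus and $\PGL_2$ actions (Proposition \ref{Proposition:iso:W/G:and:M0n}), and it proves $[W/G_{-2}]\cong[\cU/\PGL_{n+1}]$ by a pointwise criterion: a separated morphism of normal equidimensional Artin stacks that is an equivalence on $\operatorname{Spec}(k)$-points is an isomorphism (Theorem \ref{Proposition:iso:normal:alg:stacks}, a stacky Zariski-main-theorem statement). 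Essential surjectivity and injectivity on points then come from Reid's diagonalization at a single geometric point, and the automorphism comparison is done by \emph{counting}: representability of $F$ gives injectivity (Lemma \ref{no:aut}), and $|\Aut(p)|=|\Aut(F(p))|$ follows from the order-$2^n$ kernel computation (Lemma \ref{Lemma:cardinality:automorphism}) together with the degree of the gerbe $F_{-2,-1}$. Your approach instead establishes the equivalence of fibered categories directly over an arbitrary base, with the relative simultaneous diagonalization (étale cover splitting the root scheme, radical line subbundles, pairwise orthogonality, fiberwise spanning) doing the work of essential surjectivity, and a structural identification of Isom sheaves doing the work of full faithfulness. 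What your route buys is independence from the appendix and from normality/equidimensionality hypotheses; what it costs is that every step must be carried out in families (cohomology and base change for the pencil, constancy of corank of the singular members --- which needs Reid's result that singular members of a smooth pencil have corank exactly one, not just Lemma \ref{Singular:complete:intersection} --- and descent bookkeeping). One spot where you are thin: you establish that every isomorphism $X_{w_1}\to X_{w_2}$ \emph{lifts} to $G_{-2}$, i.e. surjectivity of $\operatorname{Isom}_{[W/G_{-2}]}(w_1,w_2)\to\operatorname{Isom}_{\mathcal{M}_1}$, but the injectivity half (that an element of $G_{-2}$ fixing $w$ and mapping to the identity of $\PGL_{n+1}$ is trivial in $G_{-2}$) is not spelled out; this is exactly the content of the paper's Lemma \ref{no:aut} and should be recorded, though it is a short computation with the central torus $\mathcal{N}_{-2}$ rather than a gap in the method.
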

\noindent Two advantages of using this alternative presentation are: \\
$\bullet$ $W$ is affine, which is easier to handle; we can compute 
$\Pic([W/G_{-k}])$ for every $k$ (see Theorem \ref{Theo:Pic}).  \\
$\bullet$ The map induced by $\Phi$ on Picard groups is readily available (see Proposition \ref{prop:pic:map}).

There are connections between complete intersections of quadrics and hyperelliptic curves. Indeed, given a smooth complete intersection $X=Q_1\cap Q_2$ in $\bP^{2g+1}$, consider the hyperelliptic curve given by the equation $y^2=\det(xQ_1+zQ_2)$ in $\bP(1, 1, g+1)$. Carrying out this construction in families, we obtain a factorization of the map $\Phi$ (Section \ref{section:hyperelliptic}):
$$\left\{\begin{matrix} \text{Complete intersections} \\ \text{of two quadrics in $\bP^{2g+1}$}
\end{matrix} \right\} \to \left\{\begin{matrix} \text{Hyperelliptic curves} \\ \text{of genus $g$} \end{matrix}\right\} \to \left\{\begin{matrix} \text{Binary forms of degree } 2g+2 \\ \text{with distinct roots} \end{matrix} \right\}
$$
Combining our analysis with the results in Gorchinskiy-Viviani \cite{GorViv}, we prove:
\begin{Teo}\label{pic:hyper} The induced map
$$ 
\Pic\left(\left\{\begin{matrix} \text{Hyperelliptic curves} \\ \text{of genus $g$} \end{matrix}\right\}\right) \to \Pic \left(\left\{\begin{matrix} \text{Complete intersections} \\ \text{of two quadrics in $\bP^{2g+1}$}
\end{matrix} \right\}\right)
$$
is an isomorphism. 
\end{Teo}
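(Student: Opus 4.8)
Write $\mathcal H_g$ for the stack of hyperelliptic curves of genus $g$, $\mathcal X_g$ for the stack of smooth complete intersections of two quadrics in $\bP^{2g+1}$, and $\mathcal B$ for the stack of binary forms of degree $2g+2$ with distinct roots, and let $p\colon\mathcal X_g\to\mathcal H_g$ and $q\colon\mathcal H_g\to\mathcal B$ be the morphisms from the factorization of $\Phi$ in Section~\ref{section:hyperelliptic}, so that $q\circ p=\Phi$. We must prove that $p^{*}\colon\Pic(\mathcal H_g)\to\Pic(\mathcal X_g)$ is an isomorphism. The plan is to reduce this to two assertions: (i) the groups $\Pic(\mathcal H_g)$ and $\Pic(\mathcal X_g)$ are finite of the same order; and (ii) $p^{*}$ is surjective. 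Since a surjective homomorphism between finite groups of equal cardinality is an isomorphism, (i) and (ii) will finish the argument.

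We would first establish (i). By Theorem~\ref{Teo:answer:pic}, with $n=2g+1$, the group $\Pic(\mathcal X_g)$ has order $2n=4g+2$ when $g$ is even (equivalently $n\equiv 1\pmod 4$) and order $4n=8g+4$ when $g$ is odd (equivalently $n\equiv 3\pmod 4$). On the other hand, the computation of Gorchinskiy--Viviani in \cite{GorViv} shows that $\Pic(\mathcal H_g)$ is cyclic of order $4g+2$ for $g$ even and $8g+4$ for $g$ odd. Thus in every case the two groups are finite of the same order, which is (i).

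For (ii) we would exploit the factorization $\Phi=q\circ p$: it gives $\Phi^{*}=p^{*}\circ q^{*}$, hence $\im(\Phi^{*})\subseteq\im(p^{*})$, so it suffices to show that $\Phi^{*}\colon\Pic(\mathcal B)\to\Pic(\mathcal X_g)$ is surjective. Under the presentations $\mathcal B\cong[W/G_{-1}]$ and $\mathcal X_g\cong[W/G_{-2}]$ of Theorem~\ref{Teo:introduction:diagram}, the homomorphism $\Phi^{*}$ is the explicit map described in Proposition~\ref{prop:pic:map}, while both Picard groups are computed explicitly in Theorem~\ref{Theo:Pic}. It then remains to read off from these descriptions that $\Phi^{*}$ carries a generator of $\Pic(\mathcal B)$ to a generator of $\Pic(\mathcal X_g)$, hence is surjective; together with (i) this proves that $p^{*}$ is an isomorphism.

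The one step that is not formal is the surjectivity of $\Phi^{*}$ in (ii): one must reconcile the normalization of the generators in Theorem~\ref{Theo:Pic} with the formula for $\Phi^{*}$ induced by the homomorphism $G_{-2}\to G_{-1}$ --- equivalently, check that the discriminant of the pencil generates $\Pic(\mathcal X_g)$. Once the orders of $\Pic(\mathcal H_g)$ recorded in \cite{GorViv} are matched with Theorem~\ref{Teo:answer:pic}, everything else is bookkeeping.
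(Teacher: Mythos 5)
Your overall strategy (show both groups are finite of the same order, then show $p^{*}$ is surjective) is sound, and step (i) is correct: by Theorem \ref{Teo:answer:pic} with $n=2g+1$ and by \cite{GorViv}, both groups are cyclic of order $2n=4g+2$ for $g$ even and $4n=8g+4$ for $g$ odd. The gap is in step (ii), in the case $g$ odd. There you reduce surjectivity of $p^{*}$ to surjectivity of $\Phi^{*}$ and claim one can ``read off that $\Phi^{*}$ carries a generator to a generator.'' This is impossible: for $g$ odd one has $n\equiv 3\pmod 4$, so $\Pic(\mathcal B)\cong\bZ/2n\bZ$ while $\Pic(\mathcal X_g)\cong\bZ/4n\bZ$, and no homomorphism from a group of order $2n$ can surject onto a group of order $4n$. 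Indeed Proposition \ref{prop:pic:map} says explicitly that $\Phi^{*}$ is multiplication by $d_{-2}/d_{-1}=\gcd(4,n+1)/\gcd(2,n+1)=2$, i.e.\ injective with image the index-two subgroup. So the inclusion $\im(\Phi^{*})\subseteq\im(p^{*})$ only tells you $|\im(p^{*})|\ge 2n$, leaving open the possibility $|\im(p^{*})|=2n$. (For $g$ even the argument does go through, since there $\Phi^{*}$ is an isomorphism $\bZ/2n\bZ\to\bZ/2n\bZ$.)

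To close the gap for $g$ odd you need one more input, which is exactly what the paper uses: not just the order of $\Pic(\mathcal H_g)$ from \cite{GorViv} but the fact that $q^{*}\colon\Pic(\mathcal B)\to\Pic(\mathcal H_g)$ is itself multiplication by $2$, i.e.\ injective with image the unique index-two subgroup of $\bZ/4n\bZ$ (\cite{GorViv}*{Theorem 3.6}). Granting this, suppose $p^{*}$ were not surjective; since $|\im(p^{*})|\ge 2n$ divides $4n$, its image would have order $2n$ and hence its kernel would be the unique order-two subgroup $\langle 2n\rangle\subseteq\bZ/4n\bZ$. But $2n=q^{*}(n)$ with $n\ne 0$ in $\bZ/2n\bZ$, so
\begin{equation*}
0=p^{*}(2n)=p^{*}(q^{*}(n))=\Phi^{*}(n)=2n\ne 0 \quad\text{in }\bZ/4n\bZ,
\end{equation*}
a contradiction. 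With this diagram chase inserted, your proof matches the paper's (Proposition \ref{prop:pic:isom}); as written, the $g$ odd case does not go through.
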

More generally, the stack of uniform cyclic covers of degree $n$ over a smooth curve, and its Picard group have been studied by Poma, Talpo, and Tonini in \cite{PTT}. 

The present paper is organized as follows. In Section \ref{section:context} we provide background and historical context about complete intersections of two quadrics, and introduce the relevant moduli spaces. The groups $G_k$ together with their actions on the diagonal slice are introduced in Section \ref{section:diagonal:slice}. Both Section \ref{section:main:diag} and Section \ref{section:isom} are devoted to the proof of Theorem \ref{Teo:introduction:diagram}. The commutativity of the diagram is checked in Section \ref{section:main:diag}, and the two main isomorphisms are proved in Section \ref{section:isom}. In Section \ref{section:picard:group} we address the problem of computing the Picard groups and the relevant map between them; in particular, we prove Theorem \ref{Teo:answer:pic}. In Section \ref{section:hyperelliptic} we revisit the connection with the hyperelliptic curves, and prove Theorem \ref{pic:hyper}. Finally, in the appendix, a criterion is given for checking when a map between two normal Artin stacks is an isomorphism.

\begin{bf}Conventions:\end{bf} We will work over an algebraically closed field $k$ of characteristic 0. Throughout the paper we will assume that $n\ge 3$, and for simplicity, we will let $N=\binom{n+2}{2}$ be the shorthand for $\dim (H^{0}(\mathcal{O}_{\bP^n}(2)))$. When we say that a diagram of stacks commutes, we mean that it is 2-commutative. 

\begin{bf}Acknowledgements.\end{bf} We thank our advisors Dan Abramovich and Brendan Hassett for their constant support and many helpful discussions. We are also grateful for insightful conversations with Kenneth Ascher, Asher Auel, Dori Bejleri, Damiano Fulghesu, Dhruv Ranganathan, Mattia Talpo, Angelo Vistoli and Yuwei Zhu. We thank the referee for the detailed comments on the manuscript. Research by the authors are partially supported by funds from the NSF grants DMS-1551514 and DMS-1500525, respectively.

\section{Context and Background}\label{section:context}

The complete intersections of two quadrics in $\bP^n$ have been intensively studied since the time of Pl\"ucker, Kummer and Klein \cite{Klein}. When $n=3$, the smooth complete intersections of two quadrics are exactly the genus 1 curves. Given such a complete intersection $C=Q_1 \cap Q_2$, we can consider the pencil of quadrics $\{s Q_1 + t Q_2\} \to \bP^1$. This pencil has exactly four singular members, and let $p_1,...,p_4$ be the corresponding points in $\bP^1$. With more work we can see that the degree 2 covering of $\bP^1$ ramified at the points $p_i$ is isomorphic to $C$.

When $n=4$, the resulting objects are Del Pezzo surfaces. Given a smooth Del Pezzo surface $X=Q_1\cap Q_2$, there are exactly $5$ points in $\mathbb{P}^1$ that correspond to singular members of the pencil $\{s Q_1 + t Q_2\} \to \bP^1$. If we embed $\mathbb{P}^1 \hookrightarrow \mathbb{P}^2$ through the Veronese embedding, the blow-up of $\bP^2$ at these five points is isomorphic to $X$. This classical result was generalized by Skorobogatov in positive characteristic \cite{Skor}.

In his thesis \cite{Reid}*{Proposition 2.1} Reid shows that the isomorphism class of $X$ is uniquely determined by the configuration of points in $\bP^1$ corresponding to singular members of the family $\{s Q_1 + t Q_2\} \to \bP^1$. 

These examples indicate how the configuration of the singular members of the pencil and the original complete intersection are related. Such a connection will play an essential role in our work. 
This connection, and the corresponding moduli interpretation, is already taken into account by Hassett-Kresch-Tschinkel in \cite{HKT}, where they also present a compactification of the space of smooth degree 4 Del Pezzo surfaces. Other compactifications are considered by Mabuchi-Mukai \cite{MabMuk} and Hacking-Keel-Tevelev \cite{HaKeTe}.

\noindent To formalize our discussion, we need to properly introduce the relevant moduli spaces. 

\textbf{Notation.} Throughout the paper, we will adopt the following notation: \\
$\bullet$ $\mathcal{U}\subseteq\Gr(2, H^0(\mathcal{O}_{\bP^n}(2)))$ will denote the open subset parametrizing smooth complete intersections of two quadrics. \\
$\bullet$ $U^{\circ}\subseteq\bP(H^0(\mathcal{O}_{\bP^1}(n+1)))$ will denote the open subset consisting of binary forms of degree $n+1$ with distinct roots. 

\textbf{Group actions.} On the spaces mentioned above we will consider the following actions: \\
$\bullet$ $\PGL_{n+1}$ acts on $\bP^{n}$ by change of coordinates, and preserves the property of being a complete intersection. More precisely, given $[A]\in\PGL_{n+1}$, $[A]$ acts on $V=\langle Q_1, Q_2\rangle\in\cU$ by $$[A]\ast V := \langle (A^{-1})^{T} Q_1 A^{-1}, (A^{-1})^{T} Q_2 A^{-1}\rangle $$ 
$\bullet$ Given $p(x, y)\in U^{\circ}$ and $M=\begin{pmatrix} a & b \\ c & d \end{pmatrix}$, we define $[M] \ast p(x, y) := p(ax+cy, bx+dy)$.

\textbf{Moduli.} The moduli spaces discussed in the introduction can be understood as follows:
\begin{align*}
\left\{\begin{matrix} \text{Complete intersections} \\ \text{of two quadrics in }\mathbb{P}^n
\end{matrix} \right \} = [\mathcal{U}/\PGL_{n+1}]
\end{align*}
and 
\begin{align*}
\left\{\begin{matrix} \text{Binary forms of degree } n+1 \\ \text{with distinct roots}
\end{matrix} \right\} = [U^{\circ}/\PGL_{2}] 
\end{align*}

Let us take a closer look at the functorial properties of $[\mathcal{U}/\PGL_{n+1}]$. If $X_1$ and $X_{2}$ are complete intersections of two quadrics in $\bP^{n}$, they have the same Hilbert polynomial $p(t)$. In particular, if $V=\langle Q_1, Q_2\rangle \subseteq H^{0}(\mathcal{O}_{\bP^{n}}(2))$ is such that $Q_1\cap Q_2$ is a \textit{complete} intersection, then $Q_{1}\cap Q_{2}$ corresponds to a point of $\Hilb_{\bP^{n}}^{p(t)}$. In fact, this set-theoretic function can be turned into a rational map $\Gr(2, N) \dashrightarrow \Hilb_{\bP^{n}}^{p(t)}$. In \cite{AV}, Avritzer-Vainsencher perform a sequence of blow-ups $G'\to \Gr(2, N)$ away from $\mathcal{U}$ to resolve the map $\Gr(2, N) \dashrightarrow \Hilb_{\bP^{n}}^{p(t)}$. They prove that $G'\dashrightarrow \Hilb_{\bP^{n}}^{p(t)}$ is an isomorphism. Consequently, $\mathcal{U}$ is an open subscheme of the Hilbert scheme; this reinforces the moduli interpretation of $[\mathcal{U}/\PGL_{n+1}]$.

\section{The Diagonal Slice}\label{section:diagonal:slice}

In this section we interpret the diagonal slice $W$ geometrically, and define the groups $G_k$ acting on it.
Recall that we defined $W$ to be the set of 2$\times (n+1)$ matrices, where all the $2\times$2 determinants are not 0. We can view it as the total space of a particular frame bundle.

\textbf{Notation.} Let $\mathcal{F}$ be the frame bundle associated to the universal subbundle of $\Gr(2, N)$ restricted to $\mathcal{U}$. 

Consider $V:=\langle x_0^2, ..., x_n^2\rangle\subseteq H^0(\mathcal{O}_{\bP^n}(2))$. The inclusion $V\hookrightarrow H^0(\mathcal{O}_{\bP^n}(2))$ induces an inclusion $\Gr(2, n+1)\hookrightarrow\Gr(2, N)$. Let then $\mathcal{U}' := \mathcal{U}\cap \Gr(2,n+1)$.

\begin{Oss}\label{obs:frame} The diagonal slice $W$ is isomorphic to the restriction of $\mathcal{F}$ to $\mathcal{U'}$. \end{Oss}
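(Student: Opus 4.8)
The plan is to unwind the definitions on both sides and exhibit a natural bijection between the fibers, then check it globalizes. Recall that $\mathcal{F}$ is the frame bundle of the universal rank-$2$ subbundle $\mathcal{S}\subseteq \mathcal{O}^{\oplus N}$ on $\Gr(2,N)$, restricted to $\mathcal{U}$; so a point of $\mathcal{F}$ over $V=\langle Q_1,Q_2\rangle\in\mathcal{U}$ is a choice of ordered basis $(R_1,R_2)$ of $V$, i.e. an isomorphism $k^{2}\xrightarrow{\sim} V$. Restricting further to $\mathcal{U}'=\mathcal{U}\cap\Gr(2,n+1)$, the ambient subspace is $V_0:=\langle x_0^2,\dots,x_n^2\rangle$, so a point of $\mathcal{F}|_{\mathcal{U}'}$ is a pair: a two-dimensional subspace $V\subseteq V_0$ cutting out a smooth complete intersection, together with an ordered basis $(R_1,R_2)$ of $V$. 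Writing $R_1=\sum_i a_i x_i^2$ and $R_2=\sum_i b_i x_i^2$, this pair is exactly encoded by the $2\times(n+1)$ matrix $\left(\begin{smallmatrix} a_0 & \cdots & a_n \\ b_0 & \cdots & b_n\end{smallmatrix}\right)$ whose rows are linearly independent (since they span a $2$-dimensional space). So set-theoretically the two sides visibly coincide once we identify such a matrix with the data $(V,(R_1,R_2))$.

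First I would make the map precise: send a matrix $\left(\begin{smallmatrix} a_0 & \cdots & a_n \\ b_0 & \cdots & b_n\end{smallmatrix}\right)\in W$ to the subspace $V=\langle \sum a_i x_i^2,\sum b_i x_i^2\rangle$ together with the tautological frame. Conversely a frame over $\mathcal{U}'$ gives back the matrix of coefficients. Second I would verify that the open condition defining $W$ matches the open condition defining $\mathcal{U}'$: the matrix lies in $W$ precisely when $a_ib_j\neq a_jb_i$ for all $i\neq j$, and by Lemma \ref{Singular:complete:intersection} this is exactly the smoothness of $Q_1\cap Q_2$ (equivalently, that $\det(x_0R_1+x_1R_2)$ has distinct roots); in particular the rows are automatically linearly independent, so the matrix does define a genuine point of $\Gr(2,n+1)$ landing in $\mathcal{U}'$. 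Third, I would upgrade this to an isomorphism of schemes: both $W$ and $\mathcal{F}|_{\mathcal{U}'}$ are locally closed subschemes of $\mathbb{A}^{2(n+1)}$ (for $\mathcal{F}$, using that the universal subbundle of $\Gr(2,n+1)$ pulls back a frame to an honest $2\times(n+1)$ matrix of full rank), the morphism above is a morphism of schemes, and its inverse is too; or, more functorially, one checks both represent the same functor — a family of smooth complete intersections inside the fixed diagonal subsystem together with a trivialization of the associated rank-$2$ bundle.

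The only genuine content, rather than bookkeeping, is the translation of the smoothness/distinct-roots condition into the inequalities $a_ib_j\neq a_jb_i$, and that is precisely what Lemma \ref{Singular:complete:intersection} supplies; everything else is unwinding the definition of a frame bundle. So I expect no real obstacle here — the main point to be careful about is simply that ``frame of the universal subbundle'' means exactly ``ordered basis of the $2$-plane'', so that over $\mathcal{U}'\subseteq\Gr(2,n+1)$ the frame bundle is naturally an open subscheme of the space of $2\times(n+1)$ matrices, and that this identification is compatible with the one defining $W$. The observation will then follow immediately.
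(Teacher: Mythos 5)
Your proposal is correct and follows essentially the same route as the paper: identify a point of $\mathcal{F}|_{\mathcal{U}'}$ with an ordered pair of diagonal quadrics spanning a point of $\mathcal{U}'$, send the matrix $\left(\begin{smallmatrix} a_0 & \cdots & a_n \\ b_0 & \cdots & b_n\end{smallmatrix}\right)$ to $\left(\sum a_i x_i^2, \sum b_i x_i^2\right)$, and invoke Lemma \ref{Singular:complete:intersection} to match the open condition defining $W$ with smoothness of the complete intersection. The paper states exactly this map and one-line justification; your additional remarks on full rank of the rows and the scheme-theoretic upgrade are just the bookkeeping the paper leaves implicit.
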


Indeed, a point in $\mathcal{F}' := \mathcal{F}|_{\mathcal{U}'}$ can be represented by a pair $(Q_1, Q_2)$ of diagonal quadrics such that $Q_1\cap Q_2$ is smooth. Using Lemma \ref{Singular:complete:intersection} below, we see that the map $W\to \mathcal{F}'$ sending
$$
\begin{pmatrix} a_{0} & a_{1} &\cdots &a_{n}\\ b_{0} & b_{1} &\cdots &b_{n} 
\end{pmatrix}\mapsto \left(\sum_{i=0}^{n} a_{i} x_i^2, \sum_{i=0}^{n} b_{i} x_i^2\right)
$$
is an isomorphism.

\begin{Lemma}\label{Singular:complete:intersection}
Let $X \subseteq \mathbb{P}^n$ 
be the complete intersection of two quadrics given by the matrices $\diag(\alpha_0,...,\alpha_n)$, $\diag(\beta_0,....,\beta_n)$.

Then $X$ is singular if and only if there are $0 \le i < j \le n$ such that $\alpha_i\beta_j=\alpha_j \beta_i$.
\end{Lemma}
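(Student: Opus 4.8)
The plan is to work with the defining equations of $X \subseteq \bP^n$ directly and analyze when the Jacobian criterion detects a singular point. Write $Q_1 = \sum_{i=0}^n \alpha_i x_i^2$ and $Q_2 = \sum_{i=0}^n \beta_i x_i^2$, so that $X = V(Q_1, Q_2)$. A point $p = [p_0 : \cdots : p_n] \in X$ is singular on $X$ precisely when the two gradient vectors $\nabla Q_1(p) = 2(\alpha_0 p_0, \ldots, \alpha_n p_n)$ and $\nabla Q_2(p) = 2(\beta_0 p_0, \ldots, \beta_n p_n)$ are linearly dependent (here I am using that $X$ is a complete intersection, so it is singular at $p$ iff the Jacobian matrix of $(Q_1, Q_2)$ has rank $\le 1$ there). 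Linear dependence of these two vectors means that the $2 \times 2$ matrix with rows $(\alpha_i p_i)_i$ and $(\beta_i p_i)_i$ has all $2\times 2$ minors vanishing, i.e. $p_i p_j (\alpha_i \beta_j - \alpha_j \beta_i) = 0$ for all $i < j$.

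First I would prove the easy direction: suppose there exist $i < j$ with $\alpha_i \beta_j = \alpha_j \beta_i$. I want to produce a singular point. The natural candidate lies in the coordinate plane spanned by $e_i$ and $e_j$: look for $p$ with $p_k = 0$ for $k \ne i, j$. The conditions $p \in X$ become $\alpha_i p_i^2 + \alpha_j p_j^2 = 0$ and $\beta_i p_i^2 + \beta_j p_j^2 = 0$, a linear system in $(p_i^2, p_j^2)$ whose coefficient matrix has determinant $\alpha_i \beta_j - \alpha_j \beta_i = 0$; hence there is a nonzero solution $(p_i^2, p_j^2)$, and since the ground field is algebraically closed we can extract square roots to get an actual point $p$ (one should check $p \ne 0$, which follows because the relevant $2\times 1$ columns $(\alpha_i,\beta_i)$ and $(\alpha_j,\beta_j)$ are nonzero — note if, say, $(\alpha_i,\beta_i) = 0$ then both $\alpha_i\beta_j = \alpha_j\beta_i = 0$ trivially and one can instead take $p_j = 0$, $p_i = 1$). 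At this point both gradients are supported on coordinates $i, j$ and the single minor $p_i p_j(\alpha_i\beta_j - \alpha_j\beta_i)$ vanishes, so $p$ is singular on $X$.

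For the converse, suppose $X$ is singular at some $p \in \bP^n$, and let $S = \{k : p_k \ne 0\}$, which is nonempty. If $|S| = 1$, say $S = \{i\}$, then $p = e_i \in X$ forces $\alpha_i = \beta_i = 0$, and then for any $j \ne i$ we get $\alpha_i \beta_j = 0 = \alpha_j \beta_i$, so we are done. If $|S| \ge 2$, pick any two distinct indices $i, j \in S$; the minor condition $p_i p_j(\alpha_i\beta_j - \alpha_j\beta_i) = 0$ with $p_i, p_j \ne 0$ gives $\alpha_i\beta_j = \alpha_j\beta_i$. This completes the proof.

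I do not expect a serious obstacle here; the statement is essentially an unwinding of the Jacobian criterion for complete intersections. The one point requiring a little care is the degenerate bookkeeping when some column $(\alpha_k, \beta_k)$ vanishes or when the singular point lies on a coordinate subspace of dimension $0$ or $1$ — these edge cases need to be handled so that the "there exists a nonzero solution $p$" claim in the easy direction is genuinely justified, and so that the $|S|=1$ case of the converse is not overlooked. If one prefers, the whole argument can be phrased more symmetrically by noting that $X$ is singular iff the map $\bP^n \dashrightarrow \bP^1$, $p \mapsto [\nabla Q_1(p) : \nabla Q_2(p)]$ fails to be defined, i.e. the $2\times(n+1)$ matrix $\mathrm{diag}(\alpha)\,;\,\mathrm{diag}(\beta)$ drops rank along $X$, but the coordinate-wise argument above is the most transparent.
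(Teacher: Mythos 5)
Your proof is correct and is exactly the argument the paper intends: the paper simply remarks that the lemma "follows from the Jacobian criterion for smoothness," and your write-up carries out that criterion in full, including the edge cases (the $|S|=1$ singular point and the vanishing-column situation) that the paper leaves implicit. No issues.
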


The proof of the lemma follows from the Jacobian criterion for smoothness.

Now we will construct the groups $G_k$. 
First, consider the action of $S_{n+1}$ on $\GL_2 \times \mathbb{G}_m^{n+1}$ given by permuting the $n+1$ factors: $$\sigma *(M,(\lambda_0,...,\lambda_n)):=
(M,(\lambda_{\sigma^{-1}(0)},...,\lambda_{\sigma^{-1}(n)}))$$

This action respects the group multiplication in
$\GL_{2} \times \mathbb{G}_m^{n+1}$, therefore gives a homomorphism $\rho: S_{n+1} \to \operatorname{Aut}(\GL_{2} \times \mathbb{G}_m^{n+1})$.
\begin{Def}
Let $\mathcal{G}:=(\GL_{2} \times \mathbb{G}_m^{n+1})\rtimes_{\rho} S_{n+1}$.\end{Def} For every non-zero integer $k$, let $\mathcal{N}_k:=\langle (\diag(\lambda^{-k}, \lambda^{-k}),(\lambda,...,\lambda), \Id)\rangle \subseteq \mathcal{G}$. This is a normal subgroup. We now introduce the main groups of this paper:
\begin{Def}Let $G_k:=\mathcal{G}/\mathcal{N}_k$.\end{Def}
\begin{Oss}
 Whenever $a\mid b$, we have a homomorphism $h_{b,a}:G_{b} \to G_{a}$, which is surjective with kernel of order $(\frac{b}{a})^n$.
\end{Oss}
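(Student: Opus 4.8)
The plan is to construct $h_{b,a}$ by first lifting it to an endomorphism of $\mathcal{G}$. Write $b=ac$ with $c=b/a$ a nonzero integer, and define
\[
\psi_c\colon \mathcal{G}\longrightarrow \mathcal{G},\qquad \psi_c\bigl(M,(\lambda_0,\ldots,\lambda_n),\sigma\bigr)=\bigl(M,(\lambda_0^{\,c},\ldots,\lambda_n^{\,c}),\sigma\bigr).
\]
Since raising coordinates to the $c$-th power commutes with the permutation action $\rho$ and leaves the $\GL_2$- and $S_{n+1}$-factors unchanged, $\psi_c$ is a group homomorphism, and it is surjective because the $c$-th power map $\bG_m\to\bG_m$ is surjective over an algebraically closed field of characteristic $0$. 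The key computation is $\psi_c(\mathcal{N}_b)=\mathcal{N}_a$: applying $\psi_c$ to $(\diag(\lambda^{-b},\lambda^{-b}),(\lambda,\ldots,\lambda),\Id)$ gives $(\diag((\lambda^{c})^{-a},(\lambda^{c})^{-a}),(\lambda^{c},\ldots,\lambda^{c}),\Id)$, which is the element of $\mathcal{N}_a$ with parameter $\lambda^{c}$, and $\lambda\mapsto\lambda^{c}$ is onto $\bG_m$. Hence the composite of $\psi_c$ with the projection $\mathcal{G}\to\mathcal{G}/\mathcal{N}_a=G_a$ kills $\mathcal{N}_b$, so it factors through $G_b=\mathcal{G}/\mathcal{N}_b$, producing $h_{b,a}\colon G_b\to G_a$. (That this is the map making the identity $W\to W$ equivariant is then a direct check against the definitions of the actions.) Surjectivity of $h_{b,a}$ is immediate, since $\psi_c$ and $\mathcal{G}\to G_a$ are both surjective and their composite factors through $h_{b,a}$.

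For the kernel I would use $\ker h_{b,a}\cong \psi_c^{-1}(\mathcal{N}_a)/\mathcal{N}_b$ and identify $\psi_c^{-1}(\mathcal{N}_a)$ explicitly: an element $(M,(\lambda_i),\sigma)$ lies in it exactly when $\sigma=\Id$, the $\lambda_i^{\,c}$ all equal a common value $s$, and $M=\diag(s^{-a},s^{-a})=\diag(\lambda_0^{-b},\lambda_0^{-b})$. Equivalently, $\psi_c^{-1}(\mathcal{N}_a)=\mu_c^{\,n+1}\cdot\mathcal{N}_b$, where $\mu_c^{\,n+1}\subseteq\bG_m^{\,n+1}\subseteq\mathcal{G}$ is the $c$-torsion of the torus factor. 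Because $c\mid b$, the matrix $\diag(\lambda^{-b},\lambda^{-b})$ is the identity for every $\lambda\in\mu_b$, and $\mu_c\subseteq\mu_b$; hence $\mu_c^{\,n+1}\cap\mathcal{N}_b$ is precisely the diagonally embedded $\mu_c$. The second isomorphism theorem then yields
\[
\ker h_{b,a}\ \cong\ (\mu_c^{\,n+1}\mathcal{N}_b)/\mathcal{N}_b\ \cong\ \mu_c^{\,n+1}\big/\bigl(\mu_c^{\,n+1}\cap\mathcal{N}_b\bigr)\ \cong\ \mu_c^{\,n+1}/\mu_c\ \cong\ \mu_c^{\,n},
\]
a finite group of order $|c|^{\,n}=(b/a)^{\,n}$, as claimed.

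The only point requiring genuine care is the bookkeeping in the last two displays, namely that the divisibility $c\mid b$ is invoked at exactly the right step: it is what makes $\diag(\lambda^{-b},\lambda^{-b})$ trivial on all of $\mu_c$, and therefore forces $\mu_c^{\,n+1}\cap\mathcal{N}_b$ to be the full diagonal $\mu_c$ rather than a proper subgroup. Everything else — the homomorphism property of $\psi_c$, its surjectivity, and the descent to $G_b\to G_a$ — is formal.
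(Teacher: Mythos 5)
Your construction is exactly the paper's: your $\psi_c$ is the homomorphism $\mathfrak{h}_{b,a}$ the authors use, together with the same observation that it carries $\mathcal{N}_b$ into $\mathcal{N}_a$ and hence descends to $h_{b,a}\colon G_b\to G_a$. The proposal is correct, and your explicit identification of the kernel as $\psi_c^{-1}(\mathcal{N}_a)/\mathcal{N}_b\cong\mu_c^{n+1}/\mu_c$ carefully fills in the order count $(b/a)^n$ that the paper states without proof.
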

In fact, we can consider the homomorphism $\mathfrak{h}_{b,a}:\mathcal{G} \to \mathcal{G}$ defined by
$$(M, (\lambda_i)_{i=0}^n,\sigma) \mapsto (M, (\lambda_i^{\frac{b}{a}})_{i=0}^n,\sigma)$$ It sends
$(\diag(\lambda^{-b},\lambda^{-b}),(\lambda,...,\lambda),\sigma)\mapsto
(\diag(\lambda^{-b},\lambda^{-b}),(\lambda^{\frac{b}{a}},...,\lambda^\frac{b}{a}),\sigma)$, namely $\mathcal{N}_b$
goes
to $\mathcal{N}_a$. As a result it induces a homomorphism $h_{b,a}:G_b \to G_a$. 

Let $k$ be an integer.
We have an action of $\mathbb{G}_m^{n+1}$ on $W$: $$ (\lambda_0,...,\lambda_n) *
\begin{pmatrix} a_{0} & a_{1} &\cdots &a_{n}\\ b_{0} & b_{1} &\cdots &b_{n} 
\end{pmatrix} = \begin{pmatrix} \lambda_0^k a_{0} & \lambda_1^k a_{1} &\cdots & \lambda_n^k a_{n}\\ \lambda_0^k b_{0} &
\lambda_1^k b_{1} &\cdots &\lambda_{n}^{k} b_{n} 
\end{pmatrix}$$ 
We also have an action of $\GL_2$ on the diagonal slice by left multiplication. These
two actions combine together to give an action of $\GL_2 \times \mathbb{G}_m^{n+1}$ on $W$.

Finally we have an action of $S_{n+1}$ on the diagonal slice:
 $$ \sigma*
\begin{pmatrix} a_{0} & a_{1} &\cdots &a_{n}\\ b_{0} & b_{1} &\cdots & b_{n} 
\end{pmatrix} = \begin{pmatrix} a_{\sigma^{-1}(0)} &  a_{\sigma^{-1}(1)} &\cdots &  a_{\sigma^{-1}(n)}\\ b_{\sigma^{-1}(0)} & b_{\sigma^{-1}(1)} &\cdots & b_{\sigma^{-1}(n)} 
\end{pmatrix}$$

To define an action $*_k$ of $G_k$ on $W$, recall the following lemma:
\begin{Lemma}\label{Lemma:semidirect} Suppose that $H$ and $K$ are two groups acting on a set $X$.
Let $\varphi: K\to \Aut(H)$ be a homomorphism, and let $\mathcal{G}=H\rtimes_{\varphi} K$. Assume that
$k\cdot (h\cdot (k^{-1}\cdot x)) = \varphi_{k}(h)\cdot x$ for every $x\in X$, $h\in H$, $k\in K$.

Then there is an action of $\mathcal{G}$ on $X$ via $g\cdot x = (hk)\cdot x := h\cdot(k\cdot x)$.\end{Lemma}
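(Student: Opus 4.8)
The plan is to verify directly that the proposed formula $g\cdot x=(hk)\cdot x:=h\cdot(k\cdot x)$ defines an action of $\mathcal{G}=H\rtimes_\varphi K$ on $X$. Recall first how multiplication works in the semidirect product: $(h_1,k_1)(h_2,k_2)=(h_1\varphi_{k_1}(h_2), k_1k_2)$, with identity $(e_H,e_K)$. So there are two things to check: that $(e_H,e_K)$ acts as the identity on $X$, and that the formula respects composition, i.e. $\big((h_1,k_1)(h_2,k_2)\big)\cdot x=(h_1,k_1)\cdot\big((h_2,k_2)\cdot x\big)$ for all $x\in X$.

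The identity axiom is immediate: $(e_H,e_K)\cdot x=e_H\cdot(e_K\cdot x)=x$, using that both the $H$-action and the $K$-action are honest actions. For the compatibility axiom, I would expand the left-hand side using the multiplication rule in $\mathcal{G}$ and then the definition of $*$:
\[
\big((h_1,k_1)(h_2,k_2)\big)\cdot x=(h_1\varphi_{k_1}(h_2),\,k_1k_2)\cdot x=\big(h_1\varphi_{k_1}(h_2)\big)\cdot\big((k_1k_2)\cdot x\big).
\]
Now I use that $K$ acts on $X$ to write $(k_1k_2)\cdot x=k_1\cdot(k_2\cdot x)$, and that $H$ acts on $X$ to split $\big(h_1\varphi_{k_1}(h_2)\big)\cdot(\,\cdot\,)=h_1\cdot\big(\varphi_{k_1}(h_2)\cdot(\,\cdot\,)\big)$, obtaining
\[
h_1\cdot\Big(\varphi_{k_1}(h_2)\cdot\big(k_1\cdot(k_2\cdot x)\big)\Big).
\]
At this point the hypothesis enters: applying $k_1\cdot\big(h_2\cdot(k_1^{-1}\cdot y)\big)=\varphi_{k_1}(h_2)\cdot y$ with $y=k_1\cdot(k_2\cdot x)$ (so that $k_1^{-1}\cdot y=k_2\cdot x$, using that $K$ acts), the inner expression $\varphi_{k_1}(h_2)\cdot\big(k_1\cdot(k_2\cdot x)\big)$ equals $k_1\cdot\big(h_2\cdot(k_2\cdot x)\big)$. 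Substituting back gives
\[
h_1\cdot\Big(k_1\cdot\big(h_2\cdot(k_2\cdot x)\big)\Big)=(h_1,k_1)\cdot\Big(h_2\cdot(k_2\cdot x)\Big)=(h_1,k_1)\cdot\big((h_2,k_2)\cdot x\big),
\]
which is exactly the right-hand side. This completes the verification.

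There is no serious obstacle here; the lemma is essentially a bookkeeping exercise, and the only subtlety is making sure the compatibility hypothesis is invoked with the correct substitution $y=k_1\cdot(k_2\cdot x)$ and that one freely uses the associativity of the ambient $H$- and $K$-actions at each step. One should also note, for completeness, that the hypothesis itself can be rephrased as saying $k\cdot h\cdot k^{-1}=\varphi_k(h)$ as operators on $X$, which makes the origin of the semidirect-product structure transparent.
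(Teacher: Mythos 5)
Your verification is correct: the identity axiom and the compatibility with the semidirect-product multiplication $(h_1,k_1)(h_2,k_2)=(h_1\varphi_{k_1}(h_2),k_1k_2)$ are exactly what needs checking, and your substitution $y=k_1\cdot(k_2\cdot x)$ in the hypothesis is the right move. The paper simply recalls this lemma without proof, so there is no alternative argument to compare against; your direct computation is the standard one and fills the omitted details accurately.
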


In this case, we have $H=\GL_{2}\times\mathbb{G}_{m}^{n+1}$ and $K=S_{n+1}$. One can check that
$$
\sigma \cdot (h \cdot (\sigma^{-1} \cdot x)) = \rho_{\sigma}(h)\cdot x
$$
for every $\sigma \in S_{n+1}$ and $h \in \GL_2 \times \mathbb{G}_m^{n+1}$. Since $\mathcal{N}_k$ acts trivially on $W$, this induces an action of $G_k$ on the diagonal slice.

\begin{Oss}\label{Oss:G-2frame}
Identifying $W$ with $\mathcal{F}'$ as in Observation \ref{obs:frame}, the action of $\GL_2 \subseteq G_{-2}$ on $W$ coincides with the action of $\GL_2$ on the fibers of the $\GL_2$-bundle $\mathcal{F}' \to \mathcal{U}'$.  
\end{Oss}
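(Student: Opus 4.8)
The plan is to unwind both actions under the identification $W\xrightarrow{\sim}\mathcal{F}'$ of Observation \ref{obs:frame} and check that they agree. There is no genuine obstacle here; the statement is essentially a definitional check, and the only point requiring care is matching the row-versus-ordered-basis conventions on the two sides.

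First I would note that $\GL_2$ really does embed into $G_{-2}=\mathcal{G}/\mathcal{N}_{-2}$, so that the assertion makes sense: an element of $\mathcal{N}_{-2}$ has the form $(\diag(\lambda^{2},\lambda^{2}),(\lambda,\dots,\lambda),\Id)$, and this lies in the subgroup $\GL_2\times\{1\}\times\{1\}$ only when $\lambda=1$, so $\GL_2\cap\mathcal{N}_{-2}=\{e\}$. Under this embedding the action $*_{-2}$ restricted to $\GL_2$ is just the left-multiplication action on $W$, because $\mathcal{N}_{-2}$ acts trivially on $W$ and the $\GL_2$-factor of $\mathcal{G}$ acts on $W$ by left multiplication by construction. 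Thus for $M=\left(\begin{smallmatrix} a & b \\ c & d\end{smallmatrix}\right)\in\GL_2$ we have $M *_{-2}\left(\begin{smallmatrix} a_{0} & \cdots &a_{n}\\ b_{0} & \cdots &b_{n}\end{smallmatrix}\right)=\left(\begin{smallmatrix} aa_0+bb_0 & \cdots & aa_n+bb_n\\ ca_0+db_0 & \cdots & ca_n+db_n\end{smallmatrix}\right)$.

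Next, recall that the isomorphism of Observation \ref{obs:frame} sends $\left(\begin{smallmatrix} a_{0} & \cdots &a_{n}\\ b_{0} & \cdots &b_{n}\end{smallmatrix}\right)$ to the ordered basis $(Q_1,Q_2)=\bigl(\sum_i a_i x_i^2,\ \sum_i b_i x_i^2\bigr)$ of the $2$-plane $V=\langle Q_1,Q_2\rangle\subseteq H^0(\mathcal{O}_{\bP^n}(2))$, regarded as a point of the frame bundle $\mathcal{F}'$ lying over $[V]\in\mathcal{U}'$. Describing $\mathcal{F}'$ as the bundle whose fiber over a $2$-plane is its set of ordered bases, the structure-group action of the $\GL_2$-bundle $\mathcal{F}'\to\mathcal{U}'$ is the tautological one: $M$ carries the ordered basis $(Q_1,Q_2)$ to $(aQ_1+bQ_2,\ cQ_1+dQ_2)$. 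Since $M$ is invertible, the span $V$ is unchanged, so this action preserves the fibers of $\mathcal{F}'\to\mathcal{U}'$.

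Finally I would observe that, under the identification above, the ordered basis $(aQ_1+bQ_2,\ cQ_1+dQ_2)$ corresponds to precisely the matrix $\left(\begin{smallmatrix} aa_0+bb_0 & \cdots & aa_n+bb_n\\ ca_0+db_0 & \cdots & ca_n+db_n\end{smallmatrix}\right)$ computed in the second paragraph, i.e. to $M*_{-2}\left(\begin{smallmatrix} a_{0} & \cdots &a_{n}\\ b_{0} & \cdots &b_{n}\end{smallmatrix}\right)$. Hence the two $\GL_2$-actions on $\mathcal{F}'\cong W$ coincide, which is the assertion.
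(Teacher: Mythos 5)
Your proof is correct and is exactly the definitional unwinding the paper intends: the paper states this as an Observation without proof, and the agreement of left multiplication on $W$ with the frame-bundle action $(V,v_1,v_2)\mapsto(V,av_1+bv_2,cv_1+dv_2)$ is precisely the computation that also appears in Case 2 of the equivariance lemma in Section 4(C). Your preliminary check that $\GL_2\cap\mathcal{N}_{-2}=\{e\}$, so that $\GL_2$ genuinely sits inside $G_{-2}$, is a sensible addition that the paper leaves implicit.
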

If $a$ and $b$ are integers such that $a|b$, then the homomorphism $h_{b,a}$ induces a morphism of quotient stacks $F_{b,a}:[W/G_b] \to [W/G_a]$.

\begin{Oss}\label{Obs:stabilizer:gerbe} Let $w$ be a point of $W$. Since $h_{b,a}$ is surjective with kernel of size $(\frac{b}{a})^n$, the induced homomorphism $\operatorname{Stab}_{G_b}(w) \to \operatorname{Stab}_{G_a}(w)$ is also surjective, and has kernel of size $(\frac{b}{a})^n$. Thus, $|\operatorname{Stab}_{G_b}(w)|=\left(\frac{b}{a}\right)^n |\operatorname{Stab}_{G_a}(w)|$.
\end{Oss}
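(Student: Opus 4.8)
The plan is to deduce the whole statement from the single fact, recorded above, that the homomorphism $h_{b,a}\colon G_b\to G_a$ makes the identity $W\to W$ equivariant, that is, $g*_b w = h_{b,a}(g)*_a w$ for all $g\in G_b$ and $w\in W$. (On the $\mathbb{G}_m^{n+1}$-part this is just the identity $(\lambda_i^{b/a})^a=\lambda_i^b$, while $h_{b,a}$ leaves the $\GL_2$- and $S_{n+1}$-parts unchanged.) First I would observe that this immediately makes $h_{b,a}$ restrict to a well-defined homomorphism $\Stab_{G_b}(w)\to\Stab_{G_a}(w)$: if $g*_b w=w$, then $h_{b,a}(g)*_a w=g*_b w=w$. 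Specializing to $g\in\ker h_{b,a}$ gives $g*_b w=\Id*_a w=w$, so in fact $\ker h_{b,a}\subseteq\Stab_{G_b}(w)$ for every $w\in W$.

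Next I would establish surjectivity of $\Stab_{G_b}(w)\to\Stab_{G_a}(w)$ by lifting along $h_{b,a}$: given $\bar g\in\Stab_{G_a}(w)$, choose $g\in G_b$ with $h_{b,a}(g)=\bar g$ (possible since $h_{b,a}$ is surjective, as recorded above), and then $g*_b w=h_{b,a}(g)*_a w=\bar g*_a w=w$, so $g\in\Stab_{G_b}(w)$ lifts $\bar g$. The kernel of $\Stab_{G_b}(w)\to\Stab_{G_a}(w)$ is $\ker h_{b,a}\cap\Stab_{G_b}(w)$, which by the first step is exactly $\ker h_{b,a}$, and this has order $(b/a)^n$. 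Hence $|\Stab_{G_b}(w)|=(b/a)^n\,|\Stab_{G_a}(w)|$ (the stabilizers are finite: this can be checked directly, or deduced from Theorem \ref{Teo:introduction:diagram}, since automorphism groups of smooth complete intersections of two quadrics are finite).

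I do not expect a genuine obstacle here; all the content is in the compatibility $g*_b w=h_{b,a}(g)*_a w$. The only point worth double-checking is that $\ker h_{b,a}$ really acts trivially on $W$: its elements are represented by $(\diag(\mu^{-a},\mu^{-a}),(\lambda_i)_i,\Id)$ with $\lambda_i^{b/a}=\mu$, and under $*_b$ the column scaling $\lambda_i^b=\mu^a$ is cancelled exactly by left multiplication by the scalar $\mu^{-a}$ — but this is already subsumed in the equivariance of the identity $W\to W$ used above, so nothing further is needed.
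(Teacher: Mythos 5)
Your proof is correct and fills in exactly the details the paper leaves implicit: the Observation in the paper is stated without proof, and the intended argument is precisely the one you give, resting on the equivariance $g*_b w=h_{b,a}(g)*_a w$ (equivalently, that $\ker h_{b,a}$ acts trivially on $W$, which is what makes $F_{b,a}$ a gerbe). Your checks of surjectivity, of the identification of the kernel of the restricted map with all of $\ker h_{b,a}$, and your remark on finiteness of the stabilizers are all accurate.
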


\section{Main Diagram}\label{section:main:diag}

In this section we describe all the arrows that constitute the \textit{main diagram}:

$$
\xymatrix{
[\mathcal{U}/\PGL_{n+1}] \ar[r]^{}  & [U^\circ/\PGL_{2}]  \\
[W/G_{-2}] \ar[u]^{\cong} \ar[r] & [W/G_{-1}]  \ar[u]_{\cong} 
}
$$

This section proceeds as follows: 

(A) We take a closer look at the map $[\mathcal{U}/\PGL_{n+1}] \to [U^{\circ}/\PGL_{2}]$. 

(B) We construct the morphism $[W/G_{-1}]\to [U^{\circ}/\PGL_{2}]$. 

(C) We construct the morphism $[W/G_{-2}]\to [\mathcal{U}/\PGL_{n+1}]$.

(D) We check that the main diagram is commutative. 

$\text{ }$
\\(A) Let $\xi':\mathcal{F} \to \mathbb{P}^{n+1}=\mathbb{P}(H^0(\mathcal{O}_{\mathbb{P}^1}(n+1)))$,
 defined by sending $$(V, v_1,v_2) \mapsto \det(x_0 v_1 + x_1 v_2)$$
 From \cite{Reid}*{Proposition 2.1}, it lands in $U^{\circ}$.
 We have an action of $\GL_2$ on the fibers of $\mathcal{F}$:
 $$\left(\begin{pmatrix}
a &b\\
c&d\\
\end{pmatrix} , (V,v_1,v_2)\right) \mapsto (V,av_1+bv_2,cv_1+dv_2)$$
 such that $\mathcal{F} \to \mathcal{U}$ is a principal $\GL_2$-bundle (as it is a frame bundle).
 
 \begin{Def}
  Let $\mathcal{F}^{\circ}:=\mathcal{F}/\mathbb{G}_m$, where $\mathbb{G}_m$ consists of the multiples of $\Id$ in $\GL_2$. This makes $\mathcal{F}^{\circ} \to \mathcal{U}$ a principal $\PGL_2$-bundle. Then $\xi'$ induces $\xi: \mathcal{F}^{\circ} \to U^{\circ}$.
 \end{Def}

 This allows us to define a natural action of $\PGL_{n+1}$ on $\mathcal{F}^{\circ}$, given by $$A*(V,v_1,v_2):=((A^{-1})^{T}VA^{-1},
 (A^{-1})^{T}v_1A^{-1},(A^{-1})^{T}v_2A^{-1})$$ This commutes with the canonical action of $\PGL_2$ on the fibers. As a result we have an action of $\PGL_{2} \times \PGL_{n+1}$ on $\mathcal{F}^{\circ}$.
 
  \begin{Lemma}The first projection $\PGL_{2} \times \PGL_{n+1}  \to \PGL_{2}$ makes $\xi$ equivariant.\end{Lemma}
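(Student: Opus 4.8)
The plan is to show that $\xi \colon \mathcal{F}^{\circ} \to U^{\circ}$ intertwines the $\PGL_2 \times \PGL_{n+1}$-action on the source with the $\PGL_2$-action on the target (via the first projection). Since $\xi$ is induced by $\xi'$ after quotienting by the central $\mathbb{G}_m \subseteq \GL_2$, and the $\PGL_2$-action on $U^{\circ}$ is defined by the substitution $[M]\ast p(x,y) = p(ax+cy, bx+dy)$ which descends from $\GL_2$, it suffices to check the compatibility upstairs: for $M = \begin{pmatrix} a & b \\ c & d\end{pmatrix} \in \GL_2$, $A \in \PGL_{n+1}$, and a frame $(V, v_1, v_2) \in \mathcal{F}$, verify that
$$
\xi'\bigl((M,A)\ast (V,v_1,v_2)\bigr) = [M] \ast \xi'(V, v_1, v_2).
$$

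First I would handle the $\PGL_{n+1}$-factor, showing it acts trivially on $U^{\circ}$ through $\xi'$. By definition $\xi'(V,v_1,v_2) = \det(x_0 v_1 + x_1 v_2)$, and $A$ sends $(V, v_1, v_2)$ to $((A^{-1})^T V A^{-1}, (A^{-1})^T v_1 A^{-1}, (A^{-1})^T v_2 A^{-1})$. Then
$$
\det\bigl(x_0 (A^{-1})^T v_1 A^{-1} + x_1 (A^{-1})^T v_2 A^{-1}\bigr) = \det(A^{-1})^2 \det(x_0 v_1 + x_1 v_2),
$$
so the binary form is rescaled by the nonzero constant $\det(A^{-1})^2$, hence unchanged as a point of $\bP^{n+1}$, and a fortiori of $U^{\circ}$. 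Second, for the $\GL_2$-factor: applying $M$ replaces $(v_1, v_2)$ by $(a v_1 + b v_2, c v_1 + d v_2)$, so $x_0 v_1 + x_1 v_2$ becomes $x_0(a v_1 + b v_2) + x_1 (c v_1 + d v_2) = (a x_0 + c x_1) v_1 + (b x_0 + d x_1) v_2$; taking determinants gives exactly $\xi'(V,v_1,v_2)$ evaluated at $(a x_0 + c x_1, b x_0 + d x_1)$, which is precisely $[M]\ast \xi'(V,v_1,v_2)$ under the stated action (with the roles of the two variables matching the convention $p(ax+cy,bx+dy)$). Combining the two computations and noting that the central $\mathbb{G}_m$ acts through $\det(M)$-scaling — again trivial projectively — gives the equivariance of $\xi$ after passing to $\mathcal{F}^{\circ}$.

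The only genuinely delicate point is bookkeeping: one must make sure the transpose-inverse conventions used to define the $\PGL_{n+1}$-action on $\mathcal{F}^{\circ}$ and on $\mathcal{U}$ are consistent (so that $\mathcal{F}^{\circ}\to\mathcal{U}$ is equivariant, which is needed for the square to make sense), and that the left-right convention in $[M]\ast p(x,y)$ matches the order in which $M$ acts on $(v_1, v_2)$; a potential sign/transpose or left-versus-right mismatch here would force replacing $M$ by $M^{-1}$ or $M^T$ somewhere, but does not affect the statement. Since all the maps involved are morphisms of schemes and we have checked the identity on points (and everything in sight is reduced over the algebraically closed field $k$), this is enough. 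The remaining two-commutativity of the square in the main diagram then follows from part (D), but for this lemma the pointwise check above suffices.
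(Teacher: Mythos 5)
Your proposal is correct and follows essentially the same route as the paper's proof: both split the check into the $\PGL_{n+1}$ factor (which only rescales the binary form by $\det(A^{-1})^2$, hence acts trivially on the point of $U^{\circ}$) and the $\GL_2$ factor (where the substitution $x_0 v_1 + x_1 v_2 \mapsto (ax_0+cx_1)v_1+(bx_0+dx_1)v_2$ matches the action $[M]\ast p$). The only cosmetic difference is that you phrase the computation upstairs on $\mathcal{F}$ via $\xi'$ and then descend, while the paper works directly with $\xi$ on $\mathcal{F}^{\circ}$; the content is identical.
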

 \begin{proof}
 We have to check:
$$
\xi((M, A)\cdot (V, [v_1, v_2])) = M\cdot \xi(V, [v_1, v_2])
$$
for any $A\in\PGL_{n+1}$ and $M\in\PGL_{2}$. Let $M = \begin{pmatrix} a & b \\ c & d \end{pmatrix}$. We have:
\begin{align*}
\xi((M, A)\cdot (V, [v_1, v_2])) &= \det(x_0(A^{-1})^T(av_1+bv_2)A^{-1}+x_1 (A^{-1})^{T}(cv_1+dv_2) A^{-1}) \\
&=\det(A^{-1})^2\cdot \det(x_0(av_1+bv_2)+x_1(cv_1+dv_2))
\end{align*}
On the other hand,
$$
M\cdot \xi(V, [v_1, v_2]) = M\cdot \det(x_0 v_1 + x_1 v_2)=\det((ax_0+cx_1)v_1+(bx_0+dx_1)v_2)
$$
which equals $\det(x_0(av_1+bv_2)+x_1(cv_1+dv_2))$.
Since these forms differ by a scalar (namely $\det(A^{-1})^2$), they represent the same element of $U^{\circ}$. Thus, $\xi$ is equivariant.
 \end{proof}
 This induces a morphism $[\mathcal{F}^{\circ}/\PGL_{2} \times \PGL_{n+1}] \to [U^{\circ}/\PGL_2]$.
 In order to relate the stack $[\mathcal{F}^{\circ}/\PGL_{2} \times \PGL_{n+1}]$ to $[\cU/\PGL_{n+1}]$, we use the following well-known result:
 
 \begin{Lemma}[\cite{Rom}]\label{Lemma:double:quot}
 Let $G$ be an algebraic group, with a normal subgroup $H$. Assume that $G$ acts on a scheme $X$, and assume that $[X/H]$ is a an algebraic space.
 
 Then $G/H$ acts on $[X/H]$, and $[X/G] \cong [[X/H]/(G/H)]$.
\end{Lemma}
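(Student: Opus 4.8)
The plan is to exhibit the action of $G/H$ on the algebraic space $[X/H]$ and then check that the obvious comparison morphism $[[X/H]/(G/H)] \to [X/G]$ is an isomorphism of stacks, since this is the content attributed to \cite{Rom}. First I would construct the action: $H$ normal in $G$ means conjugation gives $G \to \Aut(H)$, so $G$ acts on $X$ in a way compatible with the $H$-action, and hence $G$ acts on the quotient stack $[X/H]$; since $H$ acts trivially on $[X/H]$ (it is already quotiented out), this descends to an action of $G/H$ on $[X/H]$. Because $[X/H]$ is assumed to be an algebraic space, this is an honest action of an algebraic group on an algebraic space, so the quotient stack $[[X/H]/(G/H)]$ is defined.

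Next I would build the morphism $[[X/H]/(G/H)] \to [X/G]$. The cleanest way is via the atlas: the composite $X \to [X/H] \to [[X/H]/(G/H)]$ is an $H$-torsor followed by a $(G/H)$-torsor, and a short descent argument identifies the composite as a $G$-torsor over $[[X/H]/(G/H)]$ equipped with a $G$-equivariant map to $X$; by the universal property of $[X/G]$ this yields the desired morphism, and one checks it is compatible with the atlases $X \to [X/G]$ and $X \to [[X/H]/(G/H)]$. Conversely, a morphism $T \to [X/G]$ is a $G$-torsor $P \to T$ with an equivariant map $P \to X$; quotienting $P$ by $H$ gives a $(G/H)$-torsor $P/H \to T$ together with an equivariant map $P/H \to [X/H]$ (using that $P/H \to [X/H]$ is $H$-invariant and $G/H$-equivariant), i.e.\ a morphism $T \to [[X/H]/(G/H)]$. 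These two constructions are mutually inverse on $T$-points, functorially in $T$, so they assemble to the claimed equivalence of stacks.

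The main subtlety, and the only place where the hypothesis that $[X/H]$ is an algebraic space is used, is that for the quotient $[[X/H]/(G/H)]$ to make sense as a quotient \emph{stack} in the usual sense one wants a group acting on a space (or at least on an object for which torsors and quotients behave well); without that hypothesis one would be quotienting a stack by a group, which requires the more general formalism of quotients of stacks. I expect the bookkeeping of the $2$-categorical data (pseudofunctoriality of the torsor constructions, the natural isomorphisms witnessing $2$-commutativity) to be the most tedious part, but it is routine; there is no serious geometric obstacle, which is why the statement can safely be cited from \cite{Rom}. Accordingly, in the paper I would simply invoke the lemma and refer the reader there, applying it in the situations $G = \PGL_2 \times \PGL_{n+1}$ with $H = \PGL_2$ (so $\mathcal{F}^\circ/\PGL_2 \cong \mathcal{U}$ is a scheme, hence an algebraic space) to obtain $[\mathcal{F}^\circ/(\PGL_2\times\PGL_{n+1})] \cong [\mathcal{U}/\PGL_{n+1}]$.
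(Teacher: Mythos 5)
Your proposal is correct, and it matches the paper exactly in spirit: the paper gives no proof of this lemma at all, citing it to \cite{Rom} as a well-known result, which is precisely what you anticipate doing in your final paragraph. Your torsor-theoretic sketch (descend the $G$-action to a $G/H$-action on the algebraic space $[X/H]$, identify $X \to [[X/H]/(G/H)]$ as a $G$-torsor, and invert by sending a $G$-torsor $P \to T$ with equivariant map to $X$ to the $(G/H)$-torsor $P/H \to T$ with its induced map to $[X/H]$) is the standard argument, and you correctly flag the one real subtlety, namely that the composite of the $H$-torsor and the $(G/H)$-torsor is a $G$-torsor only because of the compatible $G$-action on $X$. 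Your intended application, with $G=\PGL_2\times\PGL_{n+1}$, $H=\PGL_2$, and $[\mathcal{F}^{\circ}/\PGL_2]\cong\mathcal{U}$ a scheme, is exactly how the paper uses the lemma in Section 4(A); note it is also invoked a second time in the proof of Proposition \ref{Proposition:iso:W/G:and:M0n}.
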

 
 Since $\mathcal{F}^{\circ} \to \mathcal{U}$ is a principal $\PGL_2$-bundle, from Lemma
 \ref{Lemma:double:quot} 
 \begin{align*}
 [\mathcal{F}^{\circ}/\PGL_{2} \times \PGL_{n+1}] &\cong [[\mathcal{F}^{\circ}/\PGL_{2}]/(\PGL_{2}\times\PGL_{n+1}/\PGL_{2})]  \\
 &\cong [\mathcal{U}/\PGL_{n+1}]
 \end{align*}
 Then we get a map
 $$\Phi:[\mathcal{U}/\PGL_{n+1}] \to [U^{\circ}/\PGL_2]$$
which is the one we discussed in the introduction.
$\text{ }$\\
(B) Given a point  $\begin{pmatrix} a_{0} & a_{1} &\cdots & a_{n} \\ b_{0} & b_{1} &\cdots & b_{n} 
\end{pmatrix} \in W$, we can associate to it the binary form $\prod_{i=0}^{n}(a_{i} x_0 + b_{i} x_1)$. This gives rise to a map $\theta: W \to U^{\circ}$. In order to produce a map between the corresponding quotient stacks, we produce a group homomorphism $G_{-1}\to\PGL_{2}$ which makes $\theta$ equivariant. We send $(A, (\lambda_{i})_{i=0}^{n}, \sigma)$ to $[A]\in\PGL_{2}$. It is immediate to check that $\theta$ is equivariant 
and let $\Theta:[W/G_{-1}] \to [U^\circ/\PGL_2]$ be the induced map.
$\text{ }$ \\
(C) Given a point $p:=\begin{pmatrix} a_{0} & a_{1} &\cdots &a_{n}\\ b_{0} & b_{1} &\cdots & b_{n} 
\end{pmatrix} \in W$, let $v_{1}=\diag(a_{0},...,a_{n})$, $v_{2}=\diag(b_{0},..., b_{n})$ and $V=\langle v_1, v_2\rangle\in\Gr(2, N)$. By Lemma \ref{Singular:complete:intersection}, it follows that $V\in \mathcal{U}$. Thus, we obtain a map 
$
\widetilde{f}: W \to \mathcal{F}, \text{ }
p\mapsto (V, v_1, v_2)$. Composing with the projection $\mathcal{F} \to [\mathcal{F}/\mathbb{G}_m]=\mathcal{F}^{\circ}$, we get
$$f:W \to \mathcal{F}^{\circ}$$ 
To induce a map between the quotient stacks, we produce a group homomorphism $\psi: G_{-2} \to \PGL_{2}\times \PGL_{n+1}$ compatible with $f$. First, consider $\mathcal{G}=\GL_{2}\times(\bG_{m}^{n+1}\rtimes S_{n+1}) \overset{ \pi_{2, 3} }{\longrightarrow} \bG_{m}^{n+1}\rtimes S_{n+1}$. Identifying $\bG_{m}^{n+1}\rtimes S_{n+1}\cong N_{\GL_{n+1}}(T)$ where the latter is the normalizer of the maximal diagonal torus $T\subseteq\GL_{n+1}$, we get $\mathcal{G}\to\GL_{n+1}$. Composing with the projection $\GL_{n+1}\to\PGL_{n+1}$ induces a map $G_{-2}\to\PGL_{n+1}$. Coupling it with $G_{-2}\to\PGL_{2}$, $(M, (\lambda_{i}), \sigma)\mapsto [M]$, we finally get a homomorphism $\psi: G_{-2}\to \PGL_{2}\times\PGL_{n+1}$. 

To specify this map concretely, one needs to choose an isomorphism $\bG_{m}^{n+1}\rtimes S_{n+1} \cong N_{\GL_{n+1}}(T)$. We pick $((\lambda_{i})_{i=0}^{n}, \sigma) \mapsto \diag(\lambda_0, ..., \lambda_n) A_{\sigma}$, where $A_{\sigma}$ is the matrix sending $e_{i}\mapsto e_{\sigma(i)}$. 

\begin{Lemma}
 $f$ is equivariant with respect to $\psi$.
\end{Lemma}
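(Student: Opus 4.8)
The plan is to verify the equivariance identity $f(\psi(g)\cdot w) = \psi(g)\cdot f(w)$ for every $g\in G_{-2}$ and $w\in W$, which by the semidirect-product structure (and Lemma \ref{Lemma:semidirect}) reduces to checking it separately on the three types of group elements: the $\GL_2$-part $(M, (1,\dots,1), \Id)$, the torus part $(\Id, (\lambda_i), \Id)$, and the permutation part $(\Id, (1,\dots,1), \sigma)$. Since $f$ is the composition of $\widetilde f\colon W\to\mathcal F$ with the projection $\mathcal F\to\mathcal F^\circ=\mathcal F/\mathbb G_m$, and $\psi$ factors through $\PGL_2\times\PGL_{n+1}$, it suffices to check that $\widetilde f$ intertwines each of the three group actions on $W$ with the corresponding action on $\mathcal F$ up to an overall nonzero scalar on $(v_1,v_2)$ (which is killed in $\mathcal F^\circ$).

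First I would handle the $\GL_2$-part. For $w=\begin{pmatrix} a_0 & \cdots & a_n \\ b_0 & \cdots & b_n\end{pmatrix}$ with $v_1=\diag(a_i)$, $v_2=\diag(b_i)$, left multiplication by $M=\begin{pmatrix} a & b \\ c & d\end{pmatrix}$ replaces the rows by $(aa_i+bb_i)$ and $(ca_i+db_i)$, so $\widetilde f(M\cdot w) = (V, av_1+bv_2, cv_1+dv_2)$; on the other hand the $\GL_2$-action on $\mathcal F$ described in part (A) sends $(V,v_1,v_2)$ to exactly $(V, av_1+bv_2, cv_1+dv_2)$, and $V$ is unchanged since $\langle av_1+bv_2, cv_1+dv_2\rangle = \langle v_1,v_2\rangle$ as $M$ is invertible. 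So this case is an identity on the nose. Next, the torus part: $(\lambda_i)$ acts on $W$ (with $k=-2$) by scaling column $i$ by $\lambda_i^{-2}$, hence $v_1\mapsto v_1\cdot\diag(\lambda_i^{-2}) = D^{-1}v_1 D^{-1}$ and $v_2\mapsto D^{-1}v_2 D^{-1}$ where $D=\diag(\lambda_0,\dots,\lambda_n)$; under the chosen isomorphism $\bG_m^{n+1}\rtimes S_{n+1}\cong N_{\GL_{n+1}}(T)$ the element $(\lambda_i)$ maps to $D$, whose image $[D]\in\PGL_{n+1}$ acts on $\mathcal F^\circ$ by $(V,v_1,v_2)\mapsto ((D^{-1})^T V D^{-1}, (D^{-1})^T v_1 D^{-1}, (D^{-1})^T v_2 D^{-1})$, and since $D$ is diagonal $(D^{-1})^T = D^{-1}$, matching exactly. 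Finally the permutation part: $\sigma$ acts on $W$ by permuting the columns via $\sigma^{-1}$, so $v_j\mapsto \diag(\text{entries of row }j \text{ permuted by }\sigma^{-1})$, which is $A_\sigma^T v_j A_\sigma = A_\sigma^{-1} v_j A_\sigma$ (as $A_\sigma$ is a permutation matrix, $A_\sigma^{-1}=A_\sigma^T$); the image of $(\Id,(1),\sigma)$ in $N_{\GL_{n+1}}(T)$ is $A_\sigma$, and $[A_\sigma^{-1}]\in\PGL_{n+1}$ acts by $(V,v_1,v_2)\mapsto (A_\sigma V A_\sigma^{-1}, A_\sigma v_1 A_\sigma^{-1}, A_\sigma v_2 A_\sigma^{-1})$, so one must only be careful that conjugation by $A_\sigma$ versus $A_\sigma^{-1}$ matches the convention $e_i\mapsto e_{\sigma(i)}$ and the $\sigma^{-1}$ in the column-permutation action — this is where a sign-of-permutation-direction bookkeeping slip is most likely.

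The main obstacle, such as it is, is precisely this last bookkeeping: getting the direction of the permutation (and the transpose-versus-inverse for the torus, though that is benign since diagonal) consistent between the definition of the $G_{-2}$-action on $W$ in Section \ref{section:diagonal:slice}, the chosen isomorphism $\bG_m^{n+1}\rtimes S_{n+1}\cong N_{\GL_{n+1}}(T)$ sending $((\lambda_i),\sigma)\mapsto \diag(\lambda_i)A_\sigma$, and the $\PGL_{n+1}$-action on $\mathcal F$ which involves $(A^{-1})^T(-)A^{-1}$. I would organize the proof by writing out $\widetilde f$ of a general group element applied to $w$ in one column and $\psi(g)$ applied to $\widetilde f(w)$ in the other, confirm they agree up to the scalar $\det(A^{-1})^2$ coming from the quadratic forms (harmless in $\mathcal F^\circ$), and then invoke that the three generating cases plus Lemma \ref{Lemma:semidirect} (whose compatibility hypothesis $\sigma\cdot(h\cdot(\sigma^{-1}\cdot x))=\rho_\sigma(h)\cdot x$ was already verified for the $W$-action) assemble to full $\mathcal G$-equivariance, hence $G_{-2}$-equivariance since $\mathcal N_{-2}$ acts trivially on both sides. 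No genuinely hard step is expected; the content is entirely in matching conventions.
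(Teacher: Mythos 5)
Your proposal is correct and follows essentially the same route as the paper: reduce to generators of $G_{-2}$ (the paper merges your torus and permutation cases into the single case $(\Id,(\lambda_i),\sigma)$ and checks $(M,(1,\dots,1),\Id)$ separately) and verify the matrix identities explicitly, where in fact all three cases hold on the nose in $\mathcal{F}$ with no scalar discrepancy to absorb in $\mathcal{F}^{\circ}$. The one bookkeeping point you flag resolves as you predict: since $(A_{\sigma}^{-1})^{T}=A_{\sigma}$, the element $[A_{\sigma}]$ (the image of $(\Id,(1,\dots,1),\sigma)$ under $\psi$, not $[A_{\sigma}^{-1}]$ as you wrote) acts by $v\mapsto A_{\sigma}vA_{\sigma}^{-1}$, which sends $\diag(a_{i})$ to $\diag(a_{\sigma^{-1}(i)})$ and matches the column-permutation action on $W$.
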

\begin{proof}
The assertion is that
$$
f\left( (M, (\lambda_{i})_{i=0}^{n}, \sigma)\cdot \begin{pmatrix} a_{0} &\cdots &a_{n} \\ b_{0} & \cdots & b_{n} \end{pmatrix}\right) = \psi((M, (\lambda_{i})_{i=0}^{n}, \sigma)) \cdot 
f\left(\begin{pmatrix} a_{0} & \cdots &a_{n}\\ b_{0}  &\cdots & b_{n} 
\end{pmatrix}\right)
$$
for every $\begin{pmatrix} a_{0} &\cdots &a_{n}\\ b_{0} &\cdots & b_{n} 
\end{pmatrix} \in W$ and $(M, (\lambda_{i})_{i=0}^{n}, \sigma)\in G_{-2}$. Since every element $(M,(\lambda_i)_{i=0}^n,\sigma) \in G_{-2}$ is a product of elements of the form $(\Id, (\lambda_i)_{i=0}^n, \sigma)$ and $(M, (1,..,1), \Id)$, we can just check these two cases.

\textbf{Case 1.} $(M, (\lambda_{i})_{i=0}^{n}, \sigma) = (\Id, (\lambda_{i})_{i=0}^{n}, \sigma)$.  \\
We have
\begin{align*}
&f\left( (\Id, (\lambda_{i})_{i=0}^{n}, \sigma)\cdot \begin{pmatrix} a_{0} & \cdots &a_{n}\\ b_{0} & \cdots &b_{n} \end{pmatrix}\right) = 
f\left(\begin{pmatrix} \lambda_0^{-2} a_{\sigma^{-1}(0)} &\cdots & \lambda_{n}^{-2} a_{\sigma^{-1}(n)}\\ \lambda_{0}^{-2} b_{\sigma^{-1}(0)} &\cdots & \lambda_{n}^{-2} b_{\sigma^{-1}(n)} \end{pmatrix} \right)\\& =(V, \diag(\lambda_{0}^{-2} a_{\sigma^{-1}(0)}, ..., \lambda_{n}^{-2} a_{\sigma^{-1}(n)}), 
\diag(\lambda_{0}^{-2} b_{\sigma^{-1}(0)}, ..., \lambda_{n}^{-2} b_{\sigma^{-1}(n)}))
\end{align*}
where $V = \langle \diag(\lambda_{0}^{-2} a_{\sigma^{-1}(0)}, ..., \lambda_{n}^{-2} a_{\sigma^{-1}(n)}), 
\diag(\lambda_{0}^{-2} b_{\sigma^{-1}(0)}, ..., \lambda_{n}^{-2} b_{\sigma^{-1}(n)}) \rangle$. On the other hand, 
\begin{align*}
\psi((\Id, (\lambda_{i})_{i=0}^{n}, \sigma)) \cdot 
f\left(\begin{pmatrix} a_{0} & \cdots &a_{n}\\ b_{0}  &\cdots &b_{n} 
\end{pmatrix}\right) 
&= \diag(\lambda_0, ..., \lambda_{n}) A_{\sigma} * f\left(\begin{pmatrix} a_{0} &\cdots &a_{n}\\ b_{0} &\cdots & b_{n}\end{pmatrix}\right) \\
&= (\tilde{V}, w_1, w_2)
\end{align*}
where $\tilde{V}=\langle w_1, w_2\rangle$ and
\begin{align*}
w_{1} &= \diag(\lambda_{0}^{-1}, ..., \lambda_{n}^{-1}) (A^{-1}_{\sigma})^{T}
\diag(a_{0}, ..., a_{n}) A_{\sigma}^{-1} \diag(\lambda_{0}^{-1}, ..., \lambda_{n}^{-1}) \\
w_{2} &= \diag(\lambda_{0}^{-1}, ..., \lambda_{n}^{-1}) (A^{-1}_{\sigma})^{T}
\diag(b_{0}, ..., b_{n}) A_{\sigma}^{-1} \diag(\lambda_{0}^{-1}, ..., \lambda_{n}^{-1}) 
\end{align*}
Now observe that
\begin{align*}
w_{1} e_{i} &= \diag(\lambda_{0}^{-1}, ..., \lambda_{n}^{-1}) (A^{-1}_{\sigma})^{T}
\diag(a_{0}, ..., a_{n}) A_{\sigma}^{-1} \diag(\lambda_{0}^{-1}, ..., \lambda_{n}^{-1}) e_{i} \\
&=\diag(\lambda_{0}^{-1}, ..., \lambda_{n}^{-1}) A_{\sigma}
\diag(a_{0}, ..., a_{n}) A_{\sigma^{-1}} \diag(\lambda_{0}^{-1}, ..., \lambda_{n}^{-1}) e_{i}  \\
&=\diag(\lambda_{0}^{-1}, ..., \lambda_{n}^{-1}) A_{\sigma}
\diag(a_{0}, ..., a_{n}) A_{\sigma^{-1}} \lambda_i^{-1} e_{i}\\
&=\diag(\lambda_{0}^{-1}, ..., \lambda_{n}^{-1}) A_{\sigma}
\diag(a_{0}, ..., a_{n}) \lambda_i^{-1} e_{\sigma^{-1}(i)}
  \\
&=\diag(\lambda_{0}^{-1}, ..., \lambda_{n}^{-1}) A_{\sigma}
a_{\sigma^{-1}(i)}\lambda_i^{-1} e_{\sigma^{-1}(i)}
 \\
&=\diag(\lambda_{0}^{-1}, ..., \lambda_{n}^{-1})
a_{\sigma^{-1}(i)}\lambda_i^{-1} e_{i}
 \\
&=\lambda_i^{-2}a_{\sigma^{-1}(i)} e_{i}
\end{align*}
Thus, $w_{1} = \diag(\lambda_{0}^{-2} a_{\sigma^{-1}(0)}, ..., \lambda_{n}^{-2} a_{\sigma^{-1}(n)})$.
Similarly, $w_{2}=\diag(\lambda_{0}^{-2} b_{\sigma^{-1}(0)}, ..., \lambda_{n}^{-2} b_{\sigma^{-1}(n)})$. This verifies the claim.

\textbf{Case 2.} $(M, (\lambda_{i})_{i=0}^{n}, \sigma) = (M, (1, 1, ..., 1), \Id)$ where $M=\begin{pmatrix} a & b \\ c & d \end{pmatrix}$. 
We have
\begin{align*}
& f\left( (M, (1, .., 1), \Id)\cdot \begin{pmatrix} a_{0} & \cdots &a_{n}\\ b_{0} &\cdots &b_{n} \end{pmatrix}\right) = 
f\left(\begin{pmatrix} a a_{0} + b b_{0} & \cdots & a a_{n} + b b_{n} \\ c a_{0} + d b_{0} &\cdots & c a_{n} + d b_{n} \end{pmatrix}\right) \\
&=(V, \diag(a a_{0} + b b_{0}, ..., a a_{n} + b b_{n}),
\diag(c a_{0} + d b_{0}, ..., c a_{n} + d b_{n}))
\end{align*}
On the other hand, 
\begin{align*}
&\psi((M, (1,...,1), \Id)) \cdot 
f\left(\begin{pmatrix} a_{0} & \cdots &a_{n}\\ b_{0} & \cdots &b_{n} 
\end{pmatrix}\right) = \begin{pmatrix}
a & b \\ 
c & d \\
\end{pmatrix}\cdot (V, \diag(a_{0}, ..., a_{n}),
\diag(b_{0}, ..., b_{n}) )\\
&=(V, a \diag(a_{0}, ..., a_{n}) + b \diag(b_{0}, ..., b_{n}),
c \diag(a_{0}, ..., a_{n}) + d \diag(b_{0}, ..., b_{n}))
\end{align*}
as desired.
\end{proof}
 
Therefore, after identifying $[\mathcal{F}^{\circ}/\PGL_{2}\times\PGL_{n+1}]$ with $[\mathcal{U}/\PGL_{n+1}]$, $f$ induces a morphism 
\begin{equation}\label{map:F}
F:[W/G_{-2}] \to [\mathcal{U}/\PGL_{n+1}]
\end{equation}

(D) Since $[\mathcal{U}/\PGL_{n+1}]\cong [\mathcal{F}^{\circ}/\PGL_{2}\times\PGL_{n+1}]$, to check that the main diagram 2-commutes, it suffices to check that the two diagrams
$$
\begin{tikzcd}
{\mathcal{F}^{\circ}} \ar[r, "\xi"] & U^{\circ} \\
W \ar[r, "\Id"'] \ar[u, "f"] & W \ar[u, "\theta"']
\end{tikzcd}
 \ \ \ \ \ \text{and} \ \ \ \  
\begin{tikzcd}
\PGL_{2}\times\PGL_{n+1} \ar[r, "\pi_1"] & \PGL_{2} \\
G_{-2} \ar[r, "h_{-2, -1}"'] \ar[u, "\psi"] & G_{-1}\ar[u]
\end{tikzcd}
$$
commute. This is indeed the case for the diagram on the left: 
$$
\xymatrix{
(\diag(a_{0},...,a_{n}),\diag(b_{0},...,b_{n})) \ar[r]^-{\xi} & \displaystyle{\prod_{i=0}^n(x_0a_{i}+x_1b_{i})} \\
{\begin{pmatrix} a_{0} & \cdots &a_{n}\\ b_{0} & \cdots &b_{n} 
\end{pmatrix}}  \ar[r]_-{\Id} \ar[u]^{f} & {\begin{pmatrix} a_{0} & \cdots &a_{n}\\ b_{0} & \cdots &b_{n} 
\end{pmatrix}} \ar[u]_{\theta}
}
$$
It is easy to check the commutativity for the diagram on the right.

\section{Main Isomorphisms}\label{section:isom}
The goal of this section is to prove Theorem \ref{Teo:introduction:diagram}.
\subsection{Isomorphism $[U^\circ/\PGL_2] \cong [W/G_{-1}]$:} We begin by showing $[M_{0,n+1}/ S_{n+1}] \cong [W/G_{-1}]$. Afterwards, we recall that $[M_{0,n+1}/ S_{n+1}]\cong [U^{\circ}/\PGL_{2}]$. The composition of these two isomorphisms results in a map isomorphic to $\Theta: [W/G_{-1}]\to [U^{\circ}/\PGL_{2}]$.

As a preparation, we need to be able to answer the following question:
\begin{center}
If $G$ acts on a scheme $X$, and $X$ admits a morphism $X \to Y$ whose fibers are $G$-orbits, is $X \to Y$ a principal $G$-bundle? Namely, when $Y=[X/G]$? 
\end{center}
The following well-known result goes in this direction:
\begin{Lemma}\label{Principal:bundle}
 Let $G$ be an affine group acting on a scheme $X$. Assume that $X \to Y$ is a geometric quotient, and that the action
 of $G$ on $X$ is free (see \cite{GIT}*{Definition 0.0.8}).
 
 Then $[X/G] \cong Y $.
\end{Lemma}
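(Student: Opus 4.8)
The plan is to upgrade the hypothesis to the statement that $\pi\colon X\to Y$ is a principal $G$-bundle, i.e. an fppf $G$-torsor. Once this is established, the isomorphism $[X/G]\cong Y$ is formal: for any test scheme $T$, a morphism $T\to[X/G]$ is the datum of a principal $G$-bundle $P\to T$ together with a $G$-equivariant map $P\to X$, and when $\pi$ is a $G$-torsor this datum is equivalent, via $P\mapsto P/G$ together with pullback of $\pi$, to a morphism $T\to Y$; these equivalences are functorial in $T$, giving $[X/G]\cong Y$. Equivalently, the groupoid presentation $[\,G\times X\rightrightarrows X\,]$ of $[X/G]$ becomes, through the shearing map $\Psi\colon G\times X\to X\times_Y X$, $(g,x)\mapsto(g\cdot x,x)$, the \v{C}ech groupoid of the fppf cover $\pi$, whose quotient stack is $Y$.

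To show $\pi$ is a principal $G$-bundle I would verify its two defining properties. The shearing map $\Psi$ is surjective precisely because $Y$ is a \emph{geometric} quotient, so the geometric fibres of $\pi$ are single $G$-orbits, whence any two points of $X$ lying over the same point of $Y$ differ by a group element. And $\Psi$ is a monomorphism — in fact a closed immersion — because the action is free in the sense of \cite{GIT}, that is, $(\mathrm{act},\mathrm{pr}_2)\colon G\times X\to X\times X$ is a closed immersion, and $\Psi$ is obtained from it by corestriction along the immersion $X\times_Y X\hookrightarrow X\times X$. To promote the surjective monomorphism $\Psi$ to an isomorphism one uses flatness of $\pi$: then $\mathrm{pr}_2\colon X\times_Y X\to X$ is flat and of finite type, $\Psi$ is a morphism between flat finite-type $X$-schemes, and it restricts to an isomorphism on every fibre (where it identifies $G$ with the orbit through the chosen point, by freeness), hence is an isomorphism by the fibrewise criterion.

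The remaining — and, I expect, only genuinely delicate — point is that $\pi$ is faithfully flat; surjectivity is built into the definition of a geometric quotient. I would handle it by observing first that freeness already makes the quotient stack $Z:=[X/G]$ an algebraic space: the diagonal of $[X/G]$ pulls back along the atlas $X\times X\to[X/G]\times[X/G]$ to the closed immersion $\Psi$, so it is representable by closed immersions. The tautological map $X\to Z$ is then a $G$-torsor, in particular faithfully flat (even smooth, as $G$ is smooth in characteristic $0$), and is itself a geometric quotient of $X$ by $G$; since a geometric quotient is also a categorical quotient, $Z$ and $Y$ get canonically identified — $\pi$ factors uniquely through $Z$, the map $X\to Z$ factors uniquely through $\pi$, and the two resulting morphisms between $Y$ and $Z$ are mutually inverse by the uniqueness clauses. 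This both yields $[X/G]\cong Y$ and exhibits $\pi$ as a torsor, closing the loop above. The subtlety to be careful about is that the universal property of the geometric quotient must here be invoked with targets that are algebraic spaces rather than merely schemes; in every application of this lemma everything in sight is of finite type and separated, and in fact $X$ is smooth with $Y$ smooth of dimension $\dim X-\dim G$, so the flatness of $\pi$ is in any case immediate from miracle flatness, and one may alternatively simply quote the structure theory of free geometric quotients from \cite{GIT}.
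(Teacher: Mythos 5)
Your strategy is genuinely different from the paper's, and it contains a gap at precisely the point you flag yourself. The closing argument of your third paragraph needs the factorization ``$X\to Z$ factors uniquely through $\pi$'', i.e.\ a morphism $Y\to Z=[X/G]$ produced by the universal property of the geometric quotient $Y$ applied to the $G$-invariant map $X\to Z$. But in \cite{GIT} a geometric quotient is a categorical quotient only in the category of \emph{schemes}, and at this stage $Z$ is known only to be an algebraic space, so the uniqueness clauses you invoke are not available. Your alternative route --- proving directly that $\pi$ is an fppf torsor via the shearing map --- is correct \emph{given} that $\pi$ is faithfully flat, but flatness of $\pi$ is not a hypothesis of the lemma; you obtain it only in the paper's particular applications (via miracle flatness for smooth $X$ and $Y$), not in the generality in which the lemma is stated, and the structure results in \cite{GIT} characterizing principal bundles take flatness of $\pi$ as an input rather than deriving it from freeness. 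So, as written, neither branch closes.

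The paper fills exactly this gap by showing that $[X/G]$ is a \emph{scheme}, after which the universal property of $Y$ does apply and uniqueness of geometric quotients finishes the proof. Concretely: freeness in the sense of \cite{GIT}*{Definition 0.0.8} means $(\sigma,\mathrm{pr}_2)\colon G\times X\to X\times X$ is a closed immersion, hence the action is proper, hence $[X/G]$ is a separated algebraic space by \cite{Edidin}*{Corollary 2.2}; the induced morphism $[X/G]\to Y$ is bijective (the fibres of $\pi$ are orbits) and separated, so \cite{Ols}*{Theorem 7.2.10} forces $[X/G]$ to be a scheme; finally $X\to[X/G]$ is itself a geometric quotient, and two geometric quotients of the same action in the category of schemes are canonically isomorphic. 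Note that this argument never needs $\pi$ to be flat. If you want to salvage your own approach, the missing step is exactly this scheme-ness of $[X/G]$: you already observe that its diagonal is a closed immersion, and combining that with the bijective separated map to $Y$ and Olsson's representability theorem would let your ``mutually inverse by uniqueness'' argument go through.
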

The properness of the action is essential; see \cite{Kol}*{Example 2.18}.
\begin{proof}
Since the action is set-theoretically free, $[X/G]$ is an algebraic space. We want to show that it is a scheme. Since the action is proper, $[X/G]$ is
separated \cite{Edidin}*{Corollary 2.2}. 
From the description of $[X/G]$, we see that there is a bijective morphism $[X/G] \to Y$, which is separated since $[X/G]$ is separated.
Now it follows from \cite{Ols}*{Theorem 7.2.10} that $[X/G]$ is a scheme. Then by definition $X \to [X/G]$ is a geometric quotient, so $[X/G] \to Y$ is an isomorphism.
\end{proof}

\begin{Prop}\label{Proposition:iso:W/G:and:M0n}
 $[M_{0,n+1}/ S_{n+1}] \cong [W/G_{-1}]$.
\end{Prop}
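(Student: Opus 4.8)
The plan is to construct an explicit morphism $W \to M_{0,n+1}$, show its fibers are exactly the $G_{-1}$-orbits (after dividing out the stabilizers appropriately), and invoke Lemma \ref{Principal:bundle}. Concretely, a point $\begin{pmatrix} a_0 & \cdots & a_n \\ b_0 & \cdots & b_n \end{pmatrix}\in W$ gives $n+1$ points $[a_i : b_i]\in\bP^1$, which are pairwise distinct precisely because of the condition $a_ib_j\neq a_jb_i$ defining $W$. So there is a natural map $g:W\to M_{0,n+1}$ (the configuration space of $n+1$ distinct ordered points on $\bP^1$ modulo $\PGL_2$), $\begin{pmatrix} a_0 & \cdots & a_n \\ b_0 & \cdots & b_n \end{pmatrix}\mapsto ([a_0:b_0],\dots,[a_n:b_n])$. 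This map is visibly $S_{n+1}$-equivariant for the permutation action on $W$ and on $M_{0,n+1}$, so it descends to $[W/S_{n+1}]\to [M_{0,n+1}/S_{n+1}]$.

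The key point is then to identify $[W/S_{n+1}]$ with $[W/G_{-1}]$, or rather to show directly that $g$ realizes $W$ as a principal bundle under the ``torus-and-$\GL_2$'' part of $G_{-1}$. Here I would use the structure of $G_{-1}$: recall $G_{-1}=\mathcal G/\mathcal N_{-1}$ where $\mathcal G = (\GL_2\times\bG_m^{n+1})\rtimes S_{n+1}$. The subgroup $H:=(\GL_2\times\bG_m^{n+1})/\mathcal N_{-1}$ is normal in $G_{-1}$ with quotient $S_{n+1}$, and by Lemma \ref{Lemma:double:quot} we have $[W/G_{-1}]\cong [[W/H]/S_{n+1}]$, where the residual $S_{n+1}$-action on $[W/H]$ is the permutation one. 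So it suffices to prove $[W/H]\cong M_{0,n+1}$ compatibly with the $S_{n+1}$-actions, which amounts to showing that the map $g_0:W\to M_{0,n+1}$ above is a geometric quotient by $H$ with free $H$-action. Freeness: an element $(A,(\lambda_i))$ fixing a point $w\in W$ must fix each $[a_i:b_i]\in\bP^1$; since the $n+1\geq 4$ points are distinct, $[A]=\Id$ in $\PGL_2$, so $A=\mu\Id$; then the equation $\mu a_i = \lambda_i^{-1}a_i$, $\mu b_i = \lambda_i^{-1}b_i$ forces $\lambda_i^{-1}=\mu$ for all $i$, i.e. the element lies in $\mathcal N_{-1}$ and is trivial in $H$. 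That it is a geometric quotient follows because two matrices in $W$ have the same image in $M_{0,n+1}$ iff they differ by simultaneously rescaling columns and applying an element of $\GL_2$ on the left — a standard fact about ordered point configurations — and properness/surjectivity of $g_0$ can be checked directly or by a dimension count ($\dim W = n+3$, $\dim H = n+4$, $\dim M_{0,n+1}=n-2$, and $n+3-(n+4)+1$ accounts for the one-dimensional stabilizer $\mathcal N_{-1}$... so really $\dim[W/H] = (n+3)-(n+4-1) = n-2$, matching).

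Having shown $[W/G_{-1}]\cong [M_{0,n+1}/S_{n+1}]$, I would then recall the classical isomorphism $[M_{0,n+1}/S_{n+1}]\cong [U^\circ/\PGL_2]$: a binary form of degree $n+1$ with distinct roots is the same as an unordered configuration of $n+1$ distinct points on $\bP^1$, up to $\PGL_2$ and scaling, and this is exactly $[M_{0,n+1}/S_{n+1}]$. Finally I would observe that the composite $[W/G_{-1}]\cong[M_{0,n+1}/S_{n+1}]\cong[U^\circ/\PGL_2]$ agrees with $\Theta$, by tracing through the definitions: $\Theta$ sends a point of $W$ to the binary form $\prod_{i=0}^n(a_ix_0+b_ix_1)$, whose roots are precisely the points $[b_i:-a_i]$ (equivalently the configuration $([a_i:b_i])_i$ up to the coordinate swap), which is what the composite does.

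The main obstacle I anticipate is verifying cleanly that $g_0:W\to M_{0,n+1}$ is genuinely a geometric quotient by $H$ in the scheme-theoretic sense required by Lemma \ref{Principal:bundle} — the set-theoretic statement is easy, but one needs the universal property / the fact that $M_{0,n+1}$ carries the quotient structure, and one must be careful that $H$ is not reductive (it contains $\bG_m^{n+1}$ acting with the scaling collapsed into $\mathcal N_{-1}$), so GIT-style arguments need the freeness input. The cleanest route is probably to exhibit local sections of $g_0$ directly: on the locus where, say, the first three points are normalized to $0,1,\infty$ via a section of $\PGL_2$ and then the columns rescaled, one gets an explicit trivialization $g_0^{-1}(\text{chart})\cong \text{chart}\times H$, which makes $g_0$ a principal $H$-bundle on the nose and bypasses the need for Lemma \ref{Principal:bundle} in its full strength.
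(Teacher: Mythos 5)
Your proof is correct in outline and begins exactly as the paper does: you pass to the normal subgroup $H=(\GL_2\times\bG_m^{n+1})/\mathcal{N}_{-1}$ with $G_{-1}/H\cong S_{n+1}$, invoke Lemma \ref{Lemma:double:quot}, and reduce to showing $[W/H]\cong M_{0,n+1}$ compatibly with the $S_{n+1}$-actions. Where you diverge is in the technical core. The paper splits the quotient by $H$ into two stages, first by $K\cong\bG_m^{n+1}$ (giving $[W/K]\cong\widehat{W}$, the ordered configuration space) and then by $H/K\cong\PGL_2$ (giving $M_{0,n+1}$), and in each stage verifies \emph{properness} of the action --- via properness of the homothety action on $\bA^2\smallsetminus\{0\}$ for $K$, and via GIT stability of $n+1$ distinct points for the linearization $\cO(2,\dots,2)$ for $\PGL_2$ --- so that Lemma \ref{Principal:bundle} applies. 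You instead quotient by $H$ in one step and propose to sidestep properness entirely by exhibiting an explicit trivialization: normalize three of the points to $0,1,\infty$ and rescale columns to produce a section $s\colon M_{0,n+1}\to W$, whence $(h,m)\mapsto h\cdot s(m)$ identifies $W$ with a principal $H$-bundle. This is a legitimate and arguably more elementary route (your freeness computation is correct, and the fibers of $W\to M_{0,n+1}$ are visibly single $H$-orbits), but note that it still owes one verification you have only gestured at: that $H\times M_{0,n+1}\to W$ is an isomorphism of schemes, not merely a bijection --- either by writing the inverse explicitly (the lift of the normalizing element of $\PGL_2$ to $\GL_2$ can only be chosen algebraically on an open cover, though the ambiguity is absorbed by $\mathcal{N}_{-1}$) or by citing that a bijective morphism of smooth equidimensional varieties in characteristic $0$ is an isomorphism. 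Your parenthetical dimension count is garbled and should be discarded: $W$ is open in $\bA^{2n+2}$, so $\dim W=2n+2$, and the correct bookkeeping is $\dim W-\dim H=(2n+2)-(n+4)=n-2=\dim M_{0,n+1}$; as written your arithmetic does not produce $n-2$, and in any case a dimension count cannot substitute for the trivialization. The comparison with $\Theta$ and the identification $[M_{0,n+1}/S_{n+1}]\cong[U^\circ/\PGL_2]$ at the end of your argument match what the paper does immediately after the proposition.
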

\begin{proof}
 Let $H:=(\GL_2 \times \mathbb{G}_m^{n+1})/\langle \diag(\lambda, \lambda),(\lambda,...,\lambda)\rangle $. Then $H$ is a normal subgroup of $G_{-1}$, and $G_{-1}/H \cong  S_{n+1}$. Then from Lemma \ref{Lemma:double:quot} it is enough to show that
 $[W/H] \cong M_{0,n+1}$, and this isomorphism has to be compatible with the actions of $G_{-1}/H$ and $S_{n+1}$.

Let $K:=\{(\operatorname{Id},(\lambda_0,...,\lambda_n)) \in H\} \cong \mathbb{G}_m^{n+1}$ and let $\widehat{W}:=\{(p_0,...,p_n) \in (\mathbb{P}^1)^{n+1}: i \neq j \Rightarrow p_i \neq p_j\}$. Note that $H/K \cong \PGL_2$. The strategy is to show that $$[W/H] \cong [[W/K]/(H/K)] \cong [\widehat{W}/\PGL_2] \cong  M_{0,n+1}$$ The first isomorphism comes from Lemma \ref{Lemma:double:quot}, whereas the isomorphisms $[W/K] \cong \widehat{W}$ and $[\widehat{W}/\PGL_2] \cong M_{0,n+1}$ remain to be shown.
To do that, we will use Lemma \ref{Principal:bundle}. The main technicality will be to prove that $[W/K]$ and $[\widehat{W}/\PGL_2]$ are separated, namely that the action of $K$ on $W$ and the one of $H/K\cong \PGL_2$ on $\widehat{W}$ are proper. For this step, we use geometric invariant theory.
  
To show that $[W/K] \cong \widehat{W}$, consider the projection morphism \begin{center}$\pi: W \to \widehat{W}$, $\begin{pmatrix} a_{0} & a_{1} &\cdots &a_{n}\\ b_{0} & b_{1} &\cdots &b_{n} 
\end{pmatrix} \mapsto ([a_{0}:b_{0}],...,[a_{n}:b_{n}])$\end{center} From \cite{GIT}*{Proposition 0.0.2}, $(\widehat{W}, \pi)$ is a geometric quotient. We show that action of $K$ on $W$ is proper. First recall that the action
of $\mathbb{G}_m$ on $\mathbb{A}^2\smallsetminus \{0\}$ by homotheties is proper. 
Then also the action of $K=(\mathbb{G}_m)^{n+1}$ on $(\mathbb{A}^2\smallsetminus \{0\})^{n+1}$ is proper, which by definition means that the map
$\Phi:K \times (\mathbb{A}^2\smallsetminus \{0\})^{n+1} \to (\mathbb{A}^2\smallsetminus \{0\})^{n+1} \times (\mathbb{A}^2\smallsetminus \{0\})^{n+1}$
which sends $(g,x) \mapsto (gx,x)$
is proper. Now, $W$ is an open subset of $(\mathbb{A}^2\smallsetminus \{0\})^{n+1}$
and since being proper is stable under base change, $\Phi^{-1}(W \times W) \to W\times W$ is proper; $W$ is $K$-invariant, so $\Phi^{-1}(W\times W)=K \times W$. Thus the action of $K$ on $W$ is proper.

 The induced action of $\PGL_2$ on $\widehat{W}$ is component by component. The space $\widehat{W}$ is the space of collections of $n+1$ distinct points in $\mathbb{P}^1$ (namely, with a particular choice of coordinates). Consider the morphism $\widehat{\pi}:\widehat{W} \to M_{0,n+1}$ which forgets the $\mathbb{P}^1$-coordinates. From \cite{GIT}*{Proposition 0.0.2} we have that $(M_{0,n+1},\widehat{\pi})$ is a geometric quotient.
 It follows from \cite{GIT}*{Chapter 3} that $n+1$ distinct points in $(\mathbb{P}^1)^{n+1}$ are stable for the linearization $L':=\cO_{(\mathbb{P}^1)^{n+1}}(2,2,....,2)$, namely $\widehat{W} \subseteq ((\mathbb{P}^1)^{n+1})^s(L')$. Then if $L:=L'_{|\widehat{W}}$, we have that $\widehat{W}^{s}(L)=\widehat{W}$, so the properness of the action follows from \cite{GIT}*{Corollary 2.2.5}.
\end{proof}

We now explain the well-known isomorphism $[M_{0,n+1}/ S_{n+1}] \cong [U^{\circ}/\PGL_{2}]$. It was proved above that $M_{0,n+1}=[\widehat{W}/\PGL_2]$. There is also an action of $S_{n+1}$ on $\widehat{W}$, which permutes the factors, and commutes with the action of $\PGL_2$. Moreover, $[\widehat{W}/S_{n+1}]\cong U^\circ$, where an isomorphism is induced by the map $\widehat{W} \to U^\circ$ sending $n+1$ distinct points to the unique binary form of degree $n+1$ vanishing on them.  Therefore: \begin{align*}[U^\circ/\PGL_{2}] &\cong [[\widehat{W}/S_{n+1}]/\PGL_2]\cong  [\widehat{W}/(S_{n+1} \times \PGL_2)]\cong \\ & \cong [[\widehat{W}/\PGL_2]/S_{n+1}]\cong [M_{0,n+1}/ S_{n+1}]\end{align*} where the second and third isomorphisms come from Lemma \ref{Lemma:double:quot}. 

\subsection{Isomorphism $[W/G_{-2}]\cong [\cU/\PGL_{n+1}]$.} Our goal is to prove the following theorem:

\begin{Teo}\label{main:theorem}
The morphism $F:[W/G_{-2}]\to [\cU/\PGL_{n+1}]$ in (\ref{map:F}) is an isomorphism.
\end{Teo}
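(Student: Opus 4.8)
The plan is to invoke the isomorphism criterion from the appendix (the statement labelled \emph{Theorem A.5} in the excerpt). Concretely, I would verify that both $[W/G_{-2}]$ and $[\cU/\PGL_{n+1}]$ are normal Artin stacks, and that the morphism $F$ is (i) representable, (ii) bijective on $k$-points, and (iii) induces isomorphisms on all stabilizer (automorphism) groups. Normality of $[\cU/\PGL_{n+1}]$ is clear since $\cU$ is a smooth (open in the Grassmannian) scheme and $\PGL_{n+1}$ is smooth; normality of $[W/G_{-2}]$ is clear since $W$ is a smooth affine scheme and $G_{-2}$ is a smooth group. So the real content is the bijectivity on points and on stabilizers.

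First I would unwind $F$ at the level of $k$-points. A $k$-point of $[\cU/\PGL_{n+1}]$ is an isomorphism class of smooth complete intersections $X = Q_1 \cap Q_2 \subseteq \bP^n$. By the classical Cauchy--Jacobi simultaneous diagonalization recalled in the introduction, any such $X$ can be written with $Q_1, Q_2$ simultaneously diagonal, i.e. comes from a point of $W$ via $f$; this gives surjectivity of $F$ on $k$-points. For injectivity, suppose two points $p, p' \in W$ map to isomorphic complete intersections. I need to produce an element of $G_{-2}$ carrying $p$ to $p'$. The key input here is the uniqueness half of simultaneous diagonalization: if two bases of diagonal quadrics define projectively equivalent smooth complete intersections, the change of coordinates in $\PGL_{n+1}$ must normalize the diagonal torus (up to the $\GL_2$-action on the pencil), because the singular members of the pencil — which are $\bP^n$-hyperplanes' worth of coordinate hyperplanes in suitable sense — pin down the coordinate system up to permutation and scaling. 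Translating "normalizes the torus, acts on the pencil by $\GL_2$, modulo scalars" into the group $G_{-2} = \mathcal{G}/\mathcal{N}_{-2}$ is exactly the bookkeeping that the definition of $G_{-2}$ was engineered to handle, and this should give injectivity.

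Next, for the stabilizers: fix $p \in W$ with image $X = Q_1 \cap Q_2$. I would show $\Stab_{G_{-2}}(p) \xrightarrow{\sim} \Stab_{\PGL_{n+1}}(X)$. Surjectivity follows from the same normalizer-of-the-torus argument as above applied to automorphisms rather than isomorphisms. Injectivity amounts to: an element of $\mathcal{G}$ fixing $p$ and mapping to the identity of $\PGL_{n+1}$ lies in $\mathcal{N}_{-2}$. An element $(M, (\lambda_i), \sigma)$ mapping to $\Id \in \PGL_{n+1}$ forces $\sigma = \Id$ and $(\lambda_i)$ scalar, say all equal to $\mu$; fixing $p$ with the weight-$(-2)$ action then forces $M = \diag(\mu^{2}, \mu^{2})$ up to the appropriate normalization, which is precisely a generator-power of $\mathcal{N}_{-2}$. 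Comparing with Observation \ref{Oss:G-2frame}, one sees the $\GL_2 \subseteq G_{-2}$ acts as the frame-bundle structure group, so this computation is the natural one. Alternatively — and this might be cleaner — one can factor $F$ through $[W/\GL_2] \cong \mathcal{F}'^{\circ} \hookrightarrow \mathcal{F}^{\circ}$ using Observation \ref{Oss:G-2frame} and Lemma \ref{Lemma:double:quot}, reducing the stabilizer comparison to a statement about $N_{\GL_{n+1}}(T)$ versus $\PGL_{n+1}$ acting on $\mathcal{F}'^{\circ} \subseteq \mathcal{F}^{\circ}$.

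The main obstacle I anticipate is the injectivity-on-points / surjectivity-on-stabilizers step, which both reduce to the precise uniqueness statement in Cauchy--Jacobi simultaneous diagonalization: \emph{given a smooth complete intersection of two quadrics, the ordered list of diagonalizing coordinate hyperplanes is unique up to permutation and individual rescaling, and the pencil basis is unique up to $\GL_2$.} I would either cite Reid's thesis \cite{Reid}*{Proposition 2.1} for the rigidity of the configuration of singular members, and then translate "same configuration of singular members $\Rightarrow$ same diagonalizing frame up to $N_{\GL_{n+1}}(T)$" via the eigenspace decomposition of the singular members of the pencil, or prove it directly: each singular member $s_i Q_1 + t_i Q_2$ has a one-dimensional kernel spanned by a coordinate vector $e_i$, the $n+1$ singular members give $n+1$ such lines, these lines are forced to be a coordinate frame (they are in general position because the $2\times 2$ minors of $p$ are nonzero, cf. Lemma \ref{Singular:complete:intersection}), and any projective equivalence of complete intersections permutes the singular members hence permutes the $e_i$ up to scaling — which is an element of $N_{\GL_{n+1}}(T)/\text{scalars}$. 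Everything else is the routine verification that these identifications are compatible with the group homomorphism $\psi$ and the map $f$, which was already set up in Section \ref{section:main:diag}.
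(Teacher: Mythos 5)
Your overall strategy coincides with the paper's: both invoke Theorem \ref{Proposition:iso:normal:alg:stacks}, and both get essential surjectivity from simultaneous diagonalization (\cite{Reid}*{Proposition 2.1}). Where you genuinely diverge is in how full faithfulness is established. You propose to prove injectivity on points and bijectivity on stabilizers directly, via the rigidity argument: the $n+1$ singular members of the pencil have one-dimensional kernels which are exactly the coordinate lines $\langle e_i\rangle$ (cf.\ Lemma \ref{Singular:complete:intersection}), so any projective equivalence or automorphism must be monomial, i.e.\ lie in the normalizer of the torus, and unwinding this lands you in $G_{-2}$. The paper instead routes everything through the already-proved isomorphism $\Theta:[W/G_{-1}]\cong[U^{\circ}/\PGL_2]$ (Proposition \ref{Proposition:iso:W/G:and:M0n}): injectivity on points is formal because $\Phi\circ F\cong\Theta\circ F_{-2,-1}$ is injective on points ($F_{-2,-1}$ is a gerbe, $\Theta$ an isomorphism); injectivity on automorphism groups follows from representability of $F$ (Lemma \ref{no:aut}); and surjectivity on automorphism groups is replaced by a cardinality count, $|\Aut(p)|=2^n\,|\Aut(F_{-2,-1}(p))|$ against $|\Aut(F(p))|=2^n\,|\Aut(\Phi(F(p)))|$, using Observation \ref{Obs:stabilizer:gerbe} and Lemma \ref{Lemma:cardinality:automorphism}. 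The torus-normalizer computation you describe does appear in the paper, but only once, inside the proof of Lemma \ref{Lemma:cardinality:automorphism} (and on the $U^{\circ}$ side); your version would have to carry it out twice, once for isomorphisms and once for automorphisms, and verify by hand the compatibility with $\psi$ and the quotient by $\mathcal{N}_{-2}$. Both routes work: the paper's buys economy by exploiting the main diagram, yours is more self-contained and makes the geometric mechanism explicit. Your suggested reformulation in terms of $[\mathcal{U}'/N_{\PGL_{n+1}}(T)]\to[\mathcal{U}/\PGL_{n+1}]$ is precisely the remark the authors record after their proof.

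One genuine omission: Theorem \ref{Proposition:iso:normal:alg:stacks} requires $F$ to be \emph{separated} (and the stacks to be equidimensional), and separatedness is neither automatic nor implied by anything on your checklist (representable, bijective on points, bijective on stabilizers); without it the criterion fails, as the line with doubled origin already shows for schemes. The paper closes this by noting that $[W/G_{-2}]\to[W/G_{-1}]\cong[M_{0,n+1}/S_{n+1}]$ is a gerbe banded by a finite group over a separated stack, hence $[W/G_{-2}]$ is separated and therefore so is $F$. You should add this step before invoking the criterion.
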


The proof of Theorem \ref{main:theorem} relies on Theorem \ref{Proposition:iso:normal:alg:stacks} below (proved in the appendix). This is a generalization of a well-known result from scheme theory: a bijective separated morphism between two normal equidimensional schemes of finite type over an algebraically closed field of characteristic 0 is an isomorphism. This result for schemes can be seen as a corollary of Zariski's main theorem (see \cite{Qliu}*{Corollary 4.6}, or \cite{ega}*{Theorem 8.12.6, Corollary 8.12.10 and Lemma 8.12.10.1}) once we observe that such a morphism would be birational. 

Similarly, one can understand Theorem \ref{Proposition:iso:normal:alg:stacks} as a corollary of Zariski's main theorem for representable morphisms of stacks (see \cite{LMB}*{Theorem 16.5}).
\begin{TeoA}Let $k$ be an algebraically closed field of characteristic 0.
 Let $\mathcal{X}_1$, $\mathcal{X}_2$ two normal, equidimensional, Artin stacks of finite type over $k$.
 Let $f:\mathcal{X}_1 \to \mathcal{X}_2$ be a separated morphism
 such that $\mathcal{X}_1(\operatorname{Spec}(k)) \to \mathcal{X}_2(\operatorname{Spec}(k))$
 is an equivalence (i.e. fully faithful and essentially surjective).
 
 Then $f$ is an isomorphism.
\end{TeoA}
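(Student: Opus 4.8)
The plan is to deduce the statement from the known scheme‑theoretic version by passing to a smooth atlas, once the hypothesis on $k$‑points has been upgraded to the representability of $f$. In outline: first I would show that $f$ is representable by algebraic spaces; then base change along a smooth atlas $V\to\mathcal{X}_2$ to obtain a morphism of normal, equidimensional, finite‑type $k$‑schemes that is separated and bijective on $k$‑points; invoke the scheme statement (the corollary of Zariski's main theorem recalled just above) to see that this base change is an isomorphism; and finally descend, using that being an isomorphism is smooth‑local on the target. Morally this is exactly Zariski's main theorem for representable morphisms of stacks (\cite{LMB}*{Theorem 16.5}), but organizing it as a descent to the scheme case lets us reuse the result already available.

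The first and main step is representability. Because $\mathcal{X}_1(\spec k)\to\mathcal{X}_2(\spec k)$ is fully faithful, for every $x\in\mathcal{X}_1(k)$ the homomorphism of finite‑type automorphism group schemes $\Aut_{\mathcal{X}_1}(x)\to\Aut_{\mathcal{X}_2}(f(x))$ is bijective on $k$‑points; in characteristic $0$ these groups are smooth, hence reduced, so this homomorphism is injective with trivial kernel. I would then consider the relative inertia, namely the kernel of $I_{\mathcal{X}_1}\to \mathcal{X}_1\times_{\mathcal{X}_2} I_{\mathcal{X}_2}$: this is a separated group algebraic space of finite type over $\mathcal{X}_1$ containing the identity section as a closed subspace, and by the previous sentence its fibre over every $k$‑point of $\mathcal{X}_1$ is trivial. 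Pulling back to a finite‑type atlas of $\mathcal{X}_1$, the locus where the relative inertia differs from the identity section is constructible and misses every closed point; since closed points are very dense in a scheme of finite type over $k$, this locus is empty. Hence the relative inertia is trivial and $f$ is representable.

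Next I would fix a smooth atlas $V\to\mathcal{X}_2$ with $V$ a finite‑type $k$‑scheme; as $\mathcal{X}_2$ is normal and equidimensional and $V\to\mathcal{X}_2$ is smooth, $V$ is normal and equidimensional. Set $W:=\mathcal{X}_1\times_{\mathcal{X}_2}V$. Since $f$ is representable, $W$ is an algebraic space; the projection $W\to V$ is separated and of finite type (base changes of $f$), while $W\to\mathcal{X}_1$ is smooth, so $W$ is normal and equidimensional. A direct check shows $W\to V$ is bijective on $k$‑points: essential surjectivity of $\mathcal{X}_1(k)\to\mathcal{X}_2(k)$ gives surjectivity, and full faithfulness gives injectivity (two points over the same $v\in V(k)$ have isomorphic images in $\mathcal{X}_1$, and the remaining ambiguity is a torsor under $\Aut(f(x))$ that is killed by the isomorphism $\Aut(x)\xrightarrow{\sim}\Aut(f(x))$). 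In particular each fibre of $W\to V$ over a closed point is a single reduced point, so $W\to V$ is quasi‑finite; being also separated over the scheme $V$, the space $W$ is in fact a scheme (Zariski's main theorem for algebraic spaces). Thus $W\to V$ is a separated bijective morphism of normal, equidimensional, finite‑type $k$‑schemes, hence an isomorphism by the scheme‑theoretic result recalled above. Finally, $W\to V$ is the base change of $f$ along the smooth surjection $V\to\mathcal{X}_2$, and being an isomorphism is smooth‑local on the target, so $f$ is an isomorphism.

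I expect the representability step to be the main obstacle: the hypothesis only controls $k$‑points, so the crux is to propagate the pointwise injectivity of automorphism groups to an honest triviality of the relative inertia over all of $\mathcal{X}_1$. This is exactly where characteristic $0$ (reducedness of finite‑type group schemes) and the very‑density of closed points in finite‑type $k$‑stacks are both essential; without them, bijectivity on $k$‑points would not force the relative inertia to vanish.
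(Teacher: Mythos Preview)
Your argument is correct and follows the same strategy as the paper: pull back along a smooth atlas $V\to\mathcal{X}_2$, show the fibre product is an algebraic space and then a scheme, and invoke the scheme-theoretic Zariski main theorem. The only organizational difference is that you first prove $f$ is representable via the relative inertia, whereas the paper argues directly that $k$-points of $V':=V\times_{\mathcal{X}_2}\mathcal{X}_1$ have trivial automorphisms (its Lemma~\ref{Lemma:Atlas}) and then upgrades this to all algebraically closed fields (its Lemma~\ref{Enough_closed_pts}); the underlying content---reducedness of finite-type group schemes in characteristic~$0$ together with density of closed points---is identical.

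One small oversight: smoothness of $V\to\mathcal{X}_2$ does not by itself make $V$ equidimensional, since the relative dimension can jump between connected components of $V$; the same issue arises for $W$. The paper patches this by replacing each lower-dimensional component $Z$ of $V$ with $Z\times\mathbb{P}^r$ for suitable $r$, after which the fibre dimensions of $V\to\mathcal{X}_2$ (hence of $W\to\mathcal{X}_1$) are constant and equidimensionality propagates. You should insert the same fix.
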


In particular, we need to check that 
$$
F : [W/G_{-2}](\spec(k)) \to [\mathcal{U}/\PGL_{n+1}](\spec(k))
$$
is fully faithful and essentially surjective. To address fully-faithfulness, one needs to understand the automorphism group of a smooth
complete intersection of two quadrics. 

Given two quadrics $Q_{1}, Q_{2}$, after a change of coordinates, we can assume
$Q_{1}=\sum a_{i} x_{i}^2$ and $Q_{2}=\sum b_{i} x_{i}^2$. In this description, we can exhibit $2^{n}$ automorphisms of 
$X=\{Q_{1}=0\}\cap \{Q_{2}=0\}$, namely $x_{i}\mapsto \pm x_{i}$. Notice that each of these automorphisms do not permute the 
singular members of the pencil $\{s Q_1 + t Q_{2}\} \mapsto \mathbb{P}^1$. Generically, these are all the automorphisms of $X$; however, there are special
complete intersections with extra automorphisms. For instance, consider 
$$
X = \left\{\sum_{i=0}^{n} x_{i}^2 = 0 \right\} \cap \left\{\sum_{i=0}^{n} \mu^{i} x_i^{2} = 0 \right\}
$$
where $\mu$ is a primitive $(n+1)$-th root of unity. In this case, an extra automorphism is given by $x_i\mapsto x_{i-1}$ for $i>0$, and
$x_{0}\mapsto x_n$. 

As this example illustrates, the extra automorphisms come from the presence of the symmetries of the coefficients. These correspond to 
the symmetries of the configuration of points in $\bP^{1}$ associated to the singular members of the pencil. From this discussion, the
automorphisms of $X$ can be divided into two sets: those fixing the singular members, and those which do not. 

Given a complete intersection $X$, it corresponds to a point $[X]\in [\mathcal{U}/\PGL_{n+1}](\spec (k))$. The content of the previous 
paragraphs is captured in this homomorphism 
$$
\Aut_{[\mathcal{U}/\PGL_{n+1}]}([X]) \overset{\Phi_{X}}{\to} \Aut_{[U^\circ/\PGL_{2}]}(\Phi([X]))
$$
Indeed, given $\psi\in \Aut_{[\mathcal{U}/\PGL_{n+1}]}([X])$, the corresponding permutation of the singular members induces the 
automorphism $\Phi_{X}(\psi)\in \Aut_{[U^\circ/\PGL_{2}]}(\Phi([X]))$. As we will show, $\ker(\Phi_{X})$ exactly consists of those
$2^{n}$ automorphisms mentioned earlier (see Lemma \ref{Lemma:cardinality:automorphism}), and $\Phi_{X}$ is surjective. These are the ingredients we will use to study  
$\Aut_{[\mathcal{U}/\PGL_{n+1}]}([X])$.

\begin{Lemma}\label{no:aut} The morphism $F:[W/G_{-2}]\to [\cU/\PGL_{n+1}]$ is representable. In particular, for each $p\in [W/G_{-2}](\spec (k))$, $\Aut_{[W/G_{-2}]}(p) \to \Aut_{[\cU/\PGL_{n+1}]}(F(p))$ is injective. 
\end{Lemma}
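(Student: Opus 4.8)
The plan is to prove that $F$ is representable by showing its fibers over geometric points are trivial, equivalently that for each $p \in [W/G_{-2}](\spec(k))$ the homomorphism $\Aut_{[W/G_{-2}]}(p) \to \Aut_{[\cU/\PGL_{n+1}]}(F(p))$ is injective; a morphism of algebraic stacks that induces injections on automorphism groups of all geometric points (and is, say, locally of finite type) is representable. Concretely, lift $p$ to an honest point $w = \begin{pmatrix} a_{0} & \cdots & a_{n} \\ b_{0} & \cdots & b_{n}\end{pmatrix} \in W$. Then $\Aut_{[W/G_{-2}]}(p) \cong \Stab_{G_{-2}}(w)$ and $\Aut_{[\cU/\PGL_{n+1}]}(F(p)) \cong \Stab_{\PGL_{n+1}}(V)$ where $V = \langle \diag(a_i), \diag(b_i)\rangle$, and the map in question is the one induced by $\psi : G_{-2} \to \PGL_{2} \times \PGL_{n+1}$ followed by the second projection (compatibly with $f$, by the equivariance lemma already proved). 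So it suffices to show: if $g = (M, (\lambda_i)_{i=0}^n, \sigma) \in G_{-2}$ fixes $w$ and its image in $\PGL_{n+1}$ is trivial, then $g$ is the identity in $G_{-2}$.

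First I would unwind what "image in $\PGL_{n+1}$ is trivial" means: under the chosen identification $((\lambda_i), \sigma) \mapsto \diag(\lambda_0, \dots, \lambda_n) A_\sigma$, the element maps into $\PGL_{n+1}$ trivially exactly when $\diag(\lambda_0, \dots, \lambda_n) A_\sigma$ is a scalar matrix. Since $A_\sigma$ is a permutation matrix, $\diag(\lambda_i) A_\sigma$ is scalar only if $\sigma = \Id$ and all $\lambda_i$ are equal, say $\lambda_i = \mu$ for all $i$. So $g = (M, (\mu, \dots, \mu), \Id)$. Now I use that $g$ fixes $w$: acting by $(\mu,\dots,\mu)$ scales the matrix $w$ entrywise by $\mu^{-2}$ (recall the $\mathbb{G}_m^{n+1}$-action on $W$ in $G_{-2}$ has weight $k = -2$), and then $M$ acts by left multiplication. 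The condition $g \cdot w = w$ becomes $M \cdot (\mu^{-2} w) = w$, i.e. $(\mu^{-2} M) \cdot w = w$ as a left $\GL_2$-action on the $2 \times (n+1)$ matrix $w$. Since $w \in W$ has at least two columns that are linearly independent in $k^2$ (indeed any two distinct columns are independent, because $a_i b_j \neq a_j b_i$), left multiplication by $\mu^{-2} M$ fixing $w$ forces $\mu^{-2} M = \Id_2$, i.e. $M = \mu^2 \Id_2$. Therefore $g = (\mu^2 \Id_2, (\mu, \dots, \mu), \Id)$, which is exactly an element of the subgroup $\mathcal{N}_{-2} = \langle(\diag(\lambda^{2}, \lambda^{2}), (\lambda, \dots, \lambda), \Id)\rangle$ (taking $\lambda = \mu$) — hence trivial in $G_{-2} = \mathcal{G}/\mathcal{N}_{-2}$. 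This proves injectivity, and representability follows.

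The main obstacle I anticipate is purely bookkeeping: being careful about where the various scalars $\mu^{-2}$, $\mu^2$ appear given the weight-$(-2)$ convention on the $\mathbb{G}_m^{n+1}$-action and the definition $\mathcal{N}_{-2}$, and making sure the identification $\Aut_{[W/G_{-2}]}(p) \cong \Stab_{G_{-2}}(w)$ together with the claim "a finite-type morphism of algebraic stacks inducing injections on all geometric automorphism groups is representable" is cited correctly (this is standard, e.g. via \cite{LMB} or the stacks project, and diagonal considerations). None of the steps require hard input; the argument is essentially the observation that the kernel of $\psi$ composed with projection to $\PGL_{n+1}$ meets every point-stabilizer trivially, which in turn reduces to the elementary fact that a $\GL_2$-element fixing a configuration of $n+1 \geq 3$ pairwise-independent columns must be the identity.
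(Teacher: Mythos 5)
Your proposal is correct and is essentially the paper's argument: the paper proves representability by base-changing along the atlas $\mathcal{U}\to[\mathcal{U}/\PGL_{n+1}]$ and checking that $G_{-2}$ acts freely on $W\times\PGL_{n+1}$, and that freeness check is exactly your computation (an element with trivial image in $\PGL_{n+1}$ must have $\sigma=\Id$ and all $\lambda_i=\mu$, and fixing $w$ then forces $M=\mu^2\Id$, which lies in $\mathcal{N}_{-2}$). Your alternative packaging via "faithful on geometric points implies representable" is a legitimate equivalent formulation over an algebraically closed field of characteristic $0$ with the finite-type, separated-diagonal hypotheses in force here.
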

\begin{proof} The second statement follows from the first by \cite{stacks-project}*{Tag 04T0}, so we prove that $F$ is representable.

Using \cite{Ols}*{Exercise 10.F}, the following diagram is cartesian: 
$$\xymatrix{
[(W\times\PGL_{n+1})/G_{-2}] \ar[d] \ar[r] & [W/G_{-2}] \ar[d] \\
\mathcal{U} \ar[r] & [\mathcal{U}/\PGL_{n+1}]  
}$$
We claim that the action of $G_{-2}$ on
$W \times \PGL_{n+1}$ is free. Indeed, in part (C) of Section \ref{section:main:diag}, we introduced the map $\psi:G_{-2} \to \PGL_{2} \times \PGL_{n+1}$. Let $\psi_2:=\pi_2 \circ \psi$, where $\pi_2: \PGL_{2} \times \PGL_{n+1} \to \PGL_{n+1}$ is the second projection.
If a point $(p,g)$ is fixed by $h \in G_{-2}$, then
$h(p,g)\overset{\operatorname{def}}{=}(h*p,\psi_2(h)g)=(p,g)$. In particular, $\psi_2(h)=\Id$. This means that
there is $A \in \GL_2$ such that
$$h=(A,(\lambda,...,\lambda), \Id)$$
Since $h*p=p$, $A=\diag(\lambda^{2},\lambda^{2})$. Such an element is the identity in $G_{-2}$.

This implies that $[(W \times \PGL_{n+1})/G_{-2}]$ is an algebraic space. Since $\mathcal{U}\to [\mathcal{U}/\PGL_{n+1}]$ is smooth and surjective, \cite{stacks-project}*{Tag 04ZP} gives the desired result.
\end{proof}
\begin{Lemma}\label{Lemma:cardinality:automorphism}
 For each $p \in [\mathcal{U}/\PGL_{n+1}](\spec(k))$, the kernel of $\operatorname{Aut}(p) \xrightarrow{\Phi}
 \operatorname{Aut}(\Phi(p))$ has cardinality $2^n$ (see also \cite{Reid}*{Chapter 2}).
\end{Lemma}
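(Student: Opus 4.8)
\textbf{Proof plan for Lemma \ref{Lemma:cardinality:automorphism}.}

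The plan is to reduce to the diagonal normal form and identify automorphisms with matrices in $\PGL_{n+1}$ that simultaneously preserve the span $V=\langle Q_1,Q_2\rangle$ and, additionally, fix each singular member of the pencil (equivalently, fix the associated point of $\bP^1$). So first I would pick a representative with $Q_1=\sum_{i=0}^n a_i x_i^2$, $Q_2=\sum_{i=0}^n b_i x_i^2$, which is possible by the Cauchy--Jacobi result recalled in the introduction; since $Q_1\cap Q_2$ is smooth, Lemma \ref{Singular:complete:intersection} gives $a_ib_j\neq a_jb_i$ for all $i\neq j$, so the $n+1$ points $[a_i:b_i]\in\bP^1$ are pairwise distinct, and these are exactly the singular members of the pencil (the roots of $\det(x_0Q_1+x_1Q_2)=\prod_i(a_ix_0+b_ix_1)$). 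An element $\psi\in\ker\Phi$ is represented by $[A]\in\PGL_{n+1}$ with $[A]\ast V=V$ and with the induced permutation of $\{[a_i:b_i]\}$ trivial; I must show there are exactly $2^n$ such classes, represented by the diagonal sign matrices $\diag(\pm1,\dots,\pm1)$.

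Next I would analyze the constraint $[A]\ast V=V$. Writing $Q_1,Q_2$ as symmetric matrices, the condition $\langle (A^{-1})^T Q_1 A^{-1},(A^{-1})^T Q_2 A^{-1}\rangle=\langle Q_1,Q_2\rangle$ says that conjugation by $(A^{-1})^T$ sends the pencil to itself, hence acts on $\bP^1$, and the ``fixes every singular member'' hypothesis forces this induced action on $\bP^1$ to be the identity; in particular $(A^{-1})^TQ_1A^{-1}$ and $(A^{-1})^TQ_2A^{-1}$ are scalar multiples of $Q_1$ and $Q_2$ respectively — after rescaling $A$ we may take $(A^{-1})^TQ_2A^{-1}=Q_2$ and $(A^{-1})^TQ_1A^{-1}=\lambda Q_1$. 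The key linear-algebra step is that a matrix $B=(A^{-1})^T$ which simultaneously conjugates the diagonal matrix pencil with distinct ratios $[a_i:b_i]$ to scalar multiples of itself must itself be monomial, i.e. $B=PD$ for a permutation matrix $P$ and a diagonal matrix $D$; this is the standard fact that the simultaneous stabilizer of a regular semisimple pencil (a maximal torus together with the data picking out its characters) is the normalizer of a maximal torus. Requiring the induced permutation on the $[a_i:b_i]$ to be trivial kills the permutation part ($P=\Id$), so $B$ — hence $A$ — is diagonal. Finally, a diagonal $A=\diag(t_0,\dots,t_n)$ with $(A^{-1})^TQ_1A^{-1}=\lambda Q_1$ and $(A^{-1})^TQ_2A^{-1}=\mu Q_2$ gives $t_i^{-2}a_i=\lambda a_i$ and $t_i^{-2}b_i=\mu b_i$ for all $i$; since for each $i$ at least one of $a_i,b_i$ is nonzero and (using distinctness of the ratios) one checks $\lambda=\mu$, so $t_i^{-2}=\lambda$ is independent of $i$, i.e. $A=t\cdot\diag(\pm1,\dots,\pm1)$. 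Modulo scalars this is exactly $\diag(\pm1,\dots,\pm1)$, and these $2^n$ sign vectors give distinct elements of $\PGL_{n+1}$, so $|\ker\Phi|=2^n$.

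I also need to confirm that each of these $2^n$ diagonal sign matrices really does define an automorphism of $[X]$ in $[\mathcal{U}/\PGL_{n+1}]$ lying in $\ker\Phi$ — but that is immediate: $\diag(\pm1,\dots,\pm1)$ fixes every $x_i^2$, hence fixes $Q_1$, $Q_2$, and every member of the pencil, so it maps to the identity under $\Phi_X$. The main obstacle is the monomial-matrix step: making rigorous the claim that a linear map carrying a pencil of diagonal quadrics with pairwise distinct ratios into scalar multiples of itself must be a permutation times a diagonal matrix. I would phrase this via the associated linear operator $T=Q_2^{-1}Q_1$ (or handle the general case where some $a_i$ or $b_i$ vanishes by a preliminary change of the $\bP^1$-coordinate so that all $b_i\neq 0$): $T$ is diagonalizable with $n+1$ distinct eigenvalues $a_i/b_i$, and $B$ preserving the pencil up to scalars means $B$ intertwines $T$ with a Möbius-twisted copy of $T$; the ``fixes the singular members'' condition makes the twist trivial, so $B$ commutes with $T$ up to scalar, forcing $B$ to preserve each eigenline of $T$, i.e. to be monomial after accounting for the absence of a permutation. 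This is the step where the distinctness hypothesis from Lemma \ref{Singular:complete:intersection} is genuinely used.
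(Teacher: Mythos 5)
Your proposal is correct and follows essentially the same route as the paper: reduce to the simultaneously diagonal normal form with pairwise distinct ratios $[a_i:b_i]$, observe that an element of the kernel must send each $Q_j$ to a scalar multiple of itself, use the regular-semisimplicity (distinct eigenvalues of $Q_2^{-1}Q_1$) to force the representing matrix to be diagonal, and count the $2^{n+1}$ sign matrices modulo $\pm\Id$. The paper merely packages the kernel computation as a stabilizer calculation on $\mathcal{F}^{\circ}\times\PGL_2$ via a cartesian diagram and normalizes $Q_1=\Id$ (so the condition becomes $B^TB=c\Id$ and $B^TAB=cA$), but the linear-algebra core is identical to yours.
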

\begin{proof}
 Using \cite{Ols}*{Exercise 10.F}, the following diagram is cartesian:
 $$
 \xymatrix{
 [\mathcal{F}^{\circ}\times\PGL_{2}/\PGL_{2}\times\PGL_{n+1}] \ar[r] \ar[d] & [\mathcal{F}^{\circ}/\PGL_{2}\times\PGL_{n+1}] \ar[d] \\
 U^\circ \ar[r] & [U^\circ/\PGL_{2}]
 }
 $$
Since $U^\circ \to [U^\circ / \PGL_2]$ is surjective, from \cite{Vis}*{Example 7.17} and Lemma \ref{Lemma:Atlas}
 it is enough to show that any orbit of $\mathcal{F}^{\circ} \times \PGL_2$,
 under the diagonal action of $\PGL_{2} \times \PGL_{n+1}$,
 has a point whose stabilizer has cardinality $2^n$.
 
 From \cite{Reid}*{Proposition 2.1}, we can choose a point of the form
 $$z:=(V,\diag(1,...,1),A:=\diag(\lambda_0,...,\lambda_n),g) \text{ with }g \in \PGL_2$$ with $i \neq j \Rightarrow \lambda_i \neq \lambda_j$. For every $B \in \GL_{n+1}$ let $[B]$ be its class
 in $\PGL_{n+1}$.
 Assume that $(a,[B^{-1}]) \in \PGL_2 \times \PGL_{n+1}$ stabilizes $z$,
 then $a=\Id$. Since $\mathcal{F}^{\circ}=[\mathcal{F}/\mathbb{G}_m]$ there is $c \in \mathbb{G}_m$ such that 
 $$B^{T} B=c\Id \text{ and } B^{T}AB=cA$$
 Up to changing $B$ with $\frac{1}{\sqrt{c}}B$ we can assume $c=1$: we can assume that $A$ and $B$ commute.
 Since $\lambda_i$ are all different, $B$ is diagonal: $B=\diag(b_0,...,b_n)$.
 Now from $B^{T}B=\Id$ we get $b_i=\pm 1$ for $i=0, ..., n$. However, $[-\Id]=[\Id]$ so we get $2^n$ distinct choices for $[B]$.
\end{proof}

\begin{Rmk} It is not hard to see that $\Aut_{[U^\circ/\PGL_{2}]}(q)=1$ for a generic choice of $q$. Combining this with Lemma \ref{Lemma:cardinality:automorphism}, it follows
that $\Aut_{[\mathcal{U}/\PGL_{n+1}]}(p)=2^{n}$ generically. This recovers the classical fact that a general complete intersection of two quadrics in 
$\bP^{n}$ has exactly $2^{n}$ automorphisms.\end{Rmk}

\begin{Lemma}\label{Lemma:Bij:on:orbits}
 $F$ is bijective on $k$-points.
\end{Lemma}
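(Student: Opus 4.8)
The plan is to prove bijectivity by analyzing the map $F$ on $k$-points separately for surjectivity and injectivity, using the two auxiliary maps to $[U^\circ/\PGL_2]$ provided by the commutative main diagram together with Lemma \ref{Lemma:cardinality:automorphism}.

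\textbf{Surjectivity.} Given a smooth complete intersection $X = Q_1 \cap Q_2$ in $\mathbb{P}^n$, representing a point of $[\mathcal{U}/\PGL_{n+1}](\spec k)$, I would invoke the classical theorem of Cauchy--Jacobi (cited in the introduction, and available via \cite{Reid}*{Proposition 2.1}): after a change of coordinates in $\PGL_{n+1}$, both $Q_1$ and $Q_2$ become simultaneously diagonal, say $Q_1 = \sum a_i x_i^2$ and $Q_2 = \sum b_i x_i^2$. By Lemma \ref{Singular:complete:intersection}, smoothness of $X$ forces $a_i b_j \neq a_j b_i$ for all $i \neq j$, so the matrix $\begin{pmatrix} a_0 & \cdots & a_n \\ b_0 & \cdots & b_n \end{pmatrix}$ defines a point $p \in W$. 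Unwinding the definition of $f$ and the identification $[\mathcal{F}^\circ/\PGL_2\times\PGL_{n+1}] \cong [\mathcal{U}/\PGL_{n+1}]$, one sees $F(p)$ is isomorphic to $[X]$, so $F$ is essentially surjective on $k$-points.

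\textbf{Injectivity.} Suppose $p, p' \in W$ have $F(p) \cong F(p')$ in $[\mathcal{U}/\PGL_{n+1}](\spec k)$; write $X, X'$ for the corresponding complete intersections. Applying the commutative main diagram (Section \ref{section:main:diag}) together with the isomorphism $[W/G_{-1}] \cong [U^\circ/\PGL_2]$, the images of $p$ and $p'$ in $[W/G_{-1}](\spec k)$ are isomorphic; that is, the unordered configurations of points $\{[a_i:b_i]\}$ and $\{[a_i':b_i']\}$ in $\mathbb{P}^1$ agree up to a $\PGL_2$-transformation and a permutation. Concretely this means there exist $M \in \GL_2$, a tuple $(\mu_i) \in \mathbb{G}_m^{n+1}$, and $\sigma \in S_{n+1}$ with $(M, (\mu_i), \sigma) *_{-1} p = p'$ — but this element need not satisfy the $*_{-2}$-compatibility. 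The task is to correct it to an element of $G_{-2}$ sending $p$ to $p'$ under $*_{-2}$. The issue is precisely the difference between the $\mathbb{G}_m^{n+1}$-weights $\lambda^{-1}$ versus $\lambda^{-2}$: scaling the $i$-th column of $p$ amounts to scaling by $\mu_i$ in the $G_{-1}$-action but by $\mu_i$ (with a square reparametrization) in the $G_{-2}$-action, and over an algebraically closed field of characteristic $0$ every unit has a square root, so the weight lattices match up after extracting square roots. I would carry this out by: first using $\sigma$ and $M$ to reduce to the case where $p$ and $p'$ have the same underlying point configuration in $\widehat{W}$; then the remaining ambiguity is a diagonal torus element, and I extract square roots coordinate-wise to produce the required element of $G_{-2}$.

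\textbf{Main obstacle.} The genuine content is injectivity, specifically handling the discrepancy between $G_{-1}$ and $G_{-2}$. Equivalently, one must show that the extra automorphisms of $X$ beyond the $2^n$ sign changes — which exist exactly when the point configuration has $\PGL_2$-symmetries, as discussed before Lemma \ref{no:aut} — lift to symmetries realizable inside $G_{-2}$, and that no two genuinely distinct $G_{-2}$-orbits in $W$ can map to the same complete intersection. I expect the cleanest route is to track carefully, via the homomorphism $h_{-2,-1}: G_{-2} \to G_{-1}$ and the maps $\psi$ and $\theta$ from Section \ref{section:main:diag}, how a $\PGL_{n+1}$-equivalence $X \cong X'$ descends to a $G_{-1}$-equivalence and then lifts back along $h_{-2,-1}$ — surjectivity of $h_{-2,-1}$ (Observation following Definition of $G_k$) guarantees such a lift exists at the level of group elements, and one checks the lifted element actually realizes $p \mapsto p'$ for $*_{-2}$ by the square-root argument above. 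The potential subtlety is ensuring the lift can be chosen to fix the frame data (the columns, not just their classes in $\mathbb{P}^1$), which is where the characteristic-$0$, algebraically-closed hypothesis is used.
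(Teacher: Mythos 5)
Your proposal is correct and follows essentially the same route as the paper: surjectivity via simultaneous diagonalization (Reid), and injectivity by passing through the commutative main diagram, using that $\Theta$ is an isomorphism to reduce to showing $F_{-2,-1}$ is injective on $k$-points. The only difference is that you establish this last fact by hand (extracting square roots to convert a $G_{-1}$-orbit equivalence into a $G_{-2}$-orbit equivalence), whereas the paper simply observes that $F_{-2,-1}$ is a gerbe; these amount to the same thing.
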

\begin{proof}
\underline{Surjective:} It is enough to show that every $\PGL_{n+1}$ orbit intersects the image of $f$. This follows from \cite{Reid}*{Proposition 2.1 (d)}.
 
 \underline{Injective:} Recall the main diagram of Section \ref{section:main:diag}:$$\xymatrix{
[\mathcal{U}/\PGL_{n+1}] \ar[r]^{\Phi}  & [U^\circ/\PGL_{2}]  \\
[W/G_{-2}] \ar[u]^{F} \ar[r]_{F_{-2,-1}} & [W/G_{-1}]  \ar[u]_{\Theta} 
}$$ Observe that $F_{-2, -1}$ is injective on points (since it is a gerbe), and so is $\Theta$ (as it is an isomorphism). So 
 $\Theta\circ F_{-2, -1}$ is injective on points. Thus, $\Phi\circ F$ is injective on points, implying the same conclusion
 for $F$. \end{proof}
\begin{proof}[Proof of Theorem \ref{main:theorem}] We verify the hypothesis of Theorem \ref{Proposition:iso:normal:alg:stacks}. In order to show that
$[W/G_{-2}](\spec (k)) \to [\mathcal{U}/\PGL_{n+1}](\spec (k))$ is an equivalence of categories, we will show that it is essentially surjective
and fully faithful.\\
\underline{Essential surjectivity.} This follows from Lemma \ref{Lemma:Bij:on:orbits}. \\
\underline{Fully Faithful.} It is sufficient to check that:

i) $F$ is injective on closed points. 

ii) $\Aut(p)\to\Aut(F(p))$ is bijective. \\
Lemma \ref{Lemma:Bij:on:orbits} verifies i). As $\Aut(p)\to\Aut(F(p))$ is injective by Lemma \ref{no:aut},
ii) will follow 
if we can prove that $|\Aut(p)|=|\Aut(F(p))|$. Consider the diagram
$$
\xymatrix{
1 \ar[r] & \ker(\Phi_{F(p)}) \ar[r]  & \Aut(F(p)) \ar[r]^-{\Phi_{F(p)}} & \Aut(\Phi(F(p))) &  \\
1 \ar[r] & \ker((F_{-2,-1})_{p}) \ar[r] & \Aut(p) \ar[r] \ar[u] & \Aut(F_{-2, -1}(p)) \ar[r] \ar[u] & 1 
}
$$
The bottom row is exact since $F_{-2, -1}$ is a gerbe. Using that $\Theta$ is an isomorphism and chasing the diagram,
we obtain that $\Phi_{F(p)}$ is surjective. Therefore,
\begin{align*}
|\Aut(p)| &= |\ker (F_{-2,-1})_{p}| \cdot |\Aut(F_{-2,-1}(p))| \\
\tag{Observation \ref{Obs:stabilizer:gerbe}}  &= 2^{n} \cdot |\Aut(F_{-2,-1}(p))|  \\
\tag{$\Theta$ is an isom.} &= 2^{n} \cdot |\Aut(\Theta(F_{-2,-1}(p)))| \\
&= 2^{n} \cdot |\Aut(\Phi(F(p)))| \\
\tag{Lemma \ref{Lemma:cardinality:automorphism}} &= |\ker(\Phi_{F(p)})| \cdot |\Aut(\Phi(F(p)))| \\
&=|\Aut(F(p))|
\end{align*}
Finally, observe that $F$ is separated. Indeed, $[W/G_{-1}] \cong [M_{0,n+1}/S_{n+1}]$ by Proposition \ref{Proposition:iso:W/G:and:M0n}, and $[M_{0, n+1}/S_{n+1}]$ is separated because $S_{n+1}$ is a finite group (hence acts properly on $M_{0, n+1}$). The morphism $[W/G_{-2}] \to [W/G_{-1}]$ is a gerbe which is locally trivial in the \'{e}tale topology banded by a finite group,
so this map is separated as well since it is separated \'{e}tale locally on the target.
Therefore $[W/G_{-2}]$ is separated, so $F$ is separated. Theorem \ref{Proposition:iso:normal:alg:stacks} applies.
\end{proof}

\begin{Oss}
Using Observation \ref{obs:frame}, we can identify $W$ with the frame bundle $\mathcal{F}'$. By Observation \ref{Oss:G-2frame}, $[\mathcal{F}'/\GL_{2}] \cong \mathcal{U}'$. Then 
$$
[W/G_{-2}]\cong [\mathcal{F}'/G_{-2}]
\cong [[\mathcal{F}'/\GL_{2}]/(G_{-2}/\GL_{2})] \cong [\mathcal{U}'/(G_{-2}/\GL_{2})]\cong 
[\mathcal{U}'/N_{\PGL_{n+1}}(T)]
$$
where $N_{\PGL_{n+1}}(T)$ is the normalizer of the diagonal torus in $\PGL_{n+1}$. Then we can rephrase the theorem above purely in terms of the geometry of the frame bundles. Indeed, consider the inclusion $\mathcal{U}' \overset{i}{\hookrightarrow} \mathcal{U}$. Then $\mathcal{U}'$ is $N_{\PGL_{n+1}}(T)$-invariant, namely the inclusion $N_{\PGL_{n+1}}(T)\hookrightarrow \PGL_{n+1}$ makes $i$ equivariant. Then Theorem \ref{main:theorem} becomes: the map
$$
[\mathcal{U}'/N_{T}(\PGL_{n+1})] \to [\mathcal{U}/\PGL_{n+1}]
$$
is an isomorphism.

\end{Oss}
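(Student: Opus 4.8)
The final statement to be proved is the reformulation in the last Observation: that the map $[\mathcal{U}'/N_{\PGL_{n+1}}(T)] \to [\mathcal{U}/\PGL_{n+1}]$ is an isomorphism, obtained by rephrasing Theorem \ref{main:theorem} through the chain of identifications $[W/G_{-2}]\cong[\mathcal{F}'/G_{-2}]\cong[[\mathcal{F}'/\GL_2]/(G_{-2}/\GL_2)]\cong[\mathcal{U}'/N_{\PGL_{n+1}}(T)]$.

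The plan is to unwind each isomorphism in the displayed chain and then invoke Theorem \ref{main:theorem}. First I would use Observation \ref{obs:frame} to identify $W$ with the restricted frame bundle $\mathcal{F}'$; this is just a relabelling, and it carries the $G_{-2}$-action on $W$ to a $G_{-2}$-action on $\mathcal{F}'$, so $[W/G_{-2}]\cong[\mathcal{F}'/G_{-2}]$. Next, by Observation \ref{Oss:G-2frame}, the subgroup $\GL_2\subseteq G_{-2}$ acts on $\mathcal{F}'$ exactly as the structure group of the principal $\GL_2$-bundle $\mathcal{F}'\to\mathcal{U}'$, so $[\mathcal{F}'/\GL_2]\cong\mathcal{U}'$ as stacks (indeed as schemes, since it is a geometric quotient by a free action). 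To pass from $[\mathcal{F}'/G_{-2}]$ to $[[\mathcal{F}'/\GL_2]/(G_{-2}/\GL_2)]$ I would apply Lemma \ref{Lemma:double:quot} with $G=G_{-2}$, $H=\GL_2$ (a normal subgroup of $G_{-2}$ — one should note that $\GL_2$ sits inside $G_{-2}=\mathcal{G}/\mathcal{N}_{-2}$ as the image of the $\GL_2$-factor of $\mathcal{G}$, and it is normal because the $S_{n+1}$-conjugation fixes the $\GL_2$-factor), and $X=\mathcal{F}'$; the hypothesis that $[\mathcal{F}'/\GL_2]$ is an algebraic space holds since it is the scheme $\mathcal{U}'$. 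Finally, $G_{-2}/\GL_2\cong\bG_m^{n+1}\rtimes S_{n+1}$ modulo the relevant copy of $\bG_m$, which under the chosen isomorphism $\bG_m^{n+1}\rtimes S_{n+1}\cong N_{\GL_{n+1}}(T)$ maps onto $N_{\PGL_{n+1}}(T)$; I would check that the induced action on $\mathcal{U}'$ is precisely restriction of the $\PGL_{n+1}$-action, which follows from the concrete description of $\psi$ in part (C) of Section \ref{section:main:diag}.

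Having assembled the chain of isomorphisms, I would identify the composite $[\mathcal{U}'/N_{\PGL_{n+1}}(T)]\cong[W/G_{-2}]\xrightarrow{F}[\mathcal{U}/\PGL_{n+1}]$ with the map induced by the $N_{\PGL_{n+1}}(T)$-equivariant inclusion $i:\mathcal{U}'\hookrightarrow\mathcal{U}$. The equivariance of $i$ is immediate: a diagonal pencil is sent to a diagonal pencil under conjugation by a monomial matrix, and the condition cutting out $\mathcal{U}'$ inside $\Gr(2,n+1)$ is preserved. That the induced map on quotient stacks agrees with $F$ is a matter of tracing through the identifications — concretely, $f:W\to\mathcal{F}^\circ$ factors as $W\cong\mathcal{F}'\hookrightarrow\mathcal{F}\to\mathcal{F}^\circ$, and the square in part (C) witnessing equivariance of $f$ under $\psi$ restricts, on the $\GL_2$-quotient, to the equivariance of $i$. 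Then Theorem \ref{main:theorem} gives exactly that this map is an isomorphism.

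The main obstacle — really the only nontrivial point, since everything else is bookkeeping — is verifying that the action of the quotient group $G_{-2}/\GL_2$ on $[\mathcal{F}'/\GL_2]\cong\mathcal{U}'$ induced by Lemma \ref{Lemma:double:quot} coincides with the restriction of the standard $\PGL_{n+1}$-action under the identification $G_{-2}/\GL_2\cong N_{\PGL_{n+1}}(T)$. This requires care with the chosen isomorphism $\bG_m^{n+1}\rtimes S_{n+1}\cong N_{\GL_{n+1}}(T)$, $((\lambda_i),\sigma)\mapsto\diag(\lambda_0,\dots,\lambda_n)A_\sigma$, and with the fact that the $\GL_2$-action on $\mathcal{F}'$ has $\bG_m$ (the homotheties) acting the same way whether viewed inside $\GL_2\subseteq G_{-2}$ or via the $\bG_m$ one quotients by in forming $N_{\PGL_{n+1}}(T)$ — this is precisely the compatibility encoded in $\mathcal{N}_{-2}$ and is exactly what makes the diagram consistent. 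Once this is pinned down, the reformulated statement follows formally from Theorem \ref{main:theorem}.
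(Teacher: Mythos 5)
Your proposal is correct and follows exactly the route the paper takes: the same chain $[W/G_{-2}]\cong[\mathcal{F}'/G_{-2}]\cong[[\mathcal{F}'/\GL_2]/(G_{-2}/\GL_2)]\cong[\mathcal{U}'/N_{\PGL_{n+1}}(T)]$ via Observations \ref{obs:frame} and \ref{Oss:G-2frame} and Lemma \ref{Lemma:double:quot}, followed by an appeal to Theorem \ref{main:theorem}. The extra verifications you flag (normality of $\GL_2$ in $G_{-2}$, the identification $G_{-2}/\GL_2\cong N_{\PGL_{n+1}}(T)$, and compatibility of the induced action with the restricted $\PGL_{n+1}$-action) are exactly the points the paper leaves implicit, and your treatment of them is sound.
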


\section{The Picard group.}\label{section:picard:group}

\subsection{Computation of the Picard group.} The goal of this section is to use Theorem \ref{main:theorem} to compute $\operatorname{Pic}([\mathcal{U}/\PGL_{n+1}])$.

The diagonal slice is described as an open subset of $\bA^{2n+2}$, and the action $G_{k}$ on $W$ extends to $\mathbb{A}^{2n+2}$. Then we have
$$A^*_{G_{k}}(\bA^{2n+2}) \to A^*_{G_{k}}(W) \to 0$$
Since the extended action of $G_{k}$ on $\bA^{2n+2}$ is \textit{linear}, this realizes $\bA^{2n+2}$ as an equivariant vector bundle over the point, so the pull-back $ A^*_{G_{k}}:=A^*_{G_{k}}(\operatorname{Spec}(k)) \to A_{G_{k}}^*(\bA^{2n+2})$ is an isomorphism.
Therefore:
$$A^*_{G_{k}} \to A^*_{G_{k}}(W) \to 0$$
and in particular $A^1_{G_{k}}(W)$ is generated by the image of $A^1_{G_{k}}$. From \cite{EdGr}*{Theorem 1},
$A^1_{G_{k}}=\operatorname{Pic}_{G_{k}}(\operatorname{Spec}(k))$ and similarly $A^1_{G_{k}}(W)=\operatorname{Pic}_{G_{k}}(W)$.
We will first understand $\operatorname{Pic}_{G_{k}}(\operatorname{Spec}(k))$, and then the kernel
of the homomorphism $A^1_{G_{k}} \to A^1_{G_{k}}(W)$.

\begin{Lemma}\label{Lemma:A1G:and:A1Gk} We have 
\begin{align*} &\text{1)} \text{ }
A^{1}_{\mathcal{G}} \cong \{(\ell, a, \delta)\in \bZ\oplus\bZ\oplus(\bZ/2\bZ)\} \\ &\text{2)} \text{ }
A^{1}_{G_{k}}\cong\{(\ell, a, \delta)\in \bZ\oplus\bZ\oplus(\bZ/2\bZ): 2 k \ell = (n+1)a\}
\end{align*}
\end{Lemma}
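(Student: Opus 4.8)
The plan is to reduce everything to the equivariant Picard group of a point for tori and for the finite group $S_{n+1}$, using that $\mathcal{G} = (\GL_2 \times \bG_m^{n+1}) \rtimes S_{n+1}$ and that $G_k = \mathcal{G}/\mathcal{N}_k$. For part (1), I would recall that for a linear algebraic group $G$ one has $A^1_G = \Pic_G(\spec k) = \widehat{G} = \Hom(G, \bG_m)$ (the character group), which follows from \cite{EdGr} together with the standard computation of equivariant Chow groups of a point. So the task is purely to compute the character group of $\mathcal{G}$. The characters of $\GL_2$ are $\det^{\ell}$ for $\ell \in \bZ$, the characters of $\bG_m^{n+1}$ form $\bZ^{n+1}$, and the characters of $S_{n+1}$ are $\{1, \sgn\}$, giving a $\bZ/2\bZ$. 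A character of the semidirect product must be invariant under the conjugation action of $S_{n+1}$; since $S_{n+1}$ permutes the $n+1$ copies of $\bG_m$, the only $S_{n+1}$-invariant characters of $\bG_m^{n+1}$ are the diagonal ones $(a, a, \dots, a)$, contributing a single $\bZ$ (recording the integer $a$), while the $\GL_2$-factor and the $S_{n+1}$-factor are untouched. Hence $\widehat{\mathcal{G}} \cong \bZ \oplus \bZ \oplus (\bZ/2\bZ)$, where the triple $(\ell, a, \delta)$ corresponds to the character
$$(M, (\lambda_i)_{i=0}^n, \sigma) \mapsto \det(M)^{\ell} \cdot \Big(\prod_{i=0}^n \lambda_i\Big)^{a} \cdot \sgn(\sigma)^{\delta}.$$

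For part (2), I would use that $G_k = \mathcal{G}/\mathcal{N}_k$, so that $\widehat{G_k}$ is the subgroup of $\widehat{\mathcal{G}}$ consisting of those characters that are trivial on $\mathcal{N}_k$. Recall $\mathcal{N}_k = \langle (\diag(\lambda^{-k}, \lambda^{-k}), (\lambda, \dots, \lambda), \Id) \rangle$. Evaluating the character attached to $(\ell, a, \delta)$ on this one-parameter family gives $\lambda^{-2k\ell} \cdot \lambda^{(n+1)a} \cdot 1 = \lambda^{(n+1)a - 2k\ell}$, which is trivial for all $\lambda$ if and only if $2k\ell = (n+1)a$. The $\sgn$-component plays no role since $S_{n+1}$ appears trivially in $\mathcal{N}_k$. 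This gives exactly the claimed description $A^1_{G_k} \cong \{(\ell, a, \delta) : 2k\ell = (n+1)a\}$.

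The one point requiring a little care — and the only genuine obstacle — is justifying that taking a quotient by a \emph{normal} (and central, in fact) subgroup $\mathcal{N}_k$ identifies $\widehat{G_k}$ with the characters of $\mathcal{G}$ trivial on $\mathcal{N}_k$, i.e. that $1 \to \widehat{\mathcal{G}/\mathcal{N}_k} \to \widehat{\mathcal{G}} \to \widehat{\mathcal{N}_k}$ behaves as expected. Since $\mathcal{N}_k \cong \bG_m$ is a torus and quotients of linear algebraic groups by normal subgroups are again linear algebraic groups, pullback along $\mathcal{G} \to G_k$ is injective on characters with image exactly the characters vanishing on $\mathcal{N}_k$; this is standard, and I would simply cite it. Everything else is the bookkeeping of the two displayed computations above, and I would present it at that level of detail rather than spelling out the elementary verification that an $S_{n+1}$-invariant character of $\bG_m^{n+1}$ is diagonal.
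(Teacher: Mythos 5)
Your proposal is correct and follows essentially the same route as the paper: identify $A^1$ with the character group via \cite{EdGr}, observe that a character of the semidirect product restricts to an $S_{n+1}$-invariant (hence diagonal) character of $\bG_m^{n+1}$, and then cut out $A^1_{G_k}$ as the characters of $\mathcal{G}$ killing $\mathcal{N}_k$, which yields exactly $2k\ell=(n+1)a$. The point you flag about identifying $\widehat{G_k}$ with characters trivial on $\mathcal{N}_k$ is used without comment in the paper, so your treatment is, if anything, slightly more careful.
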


\begin{proof}
1) Using Theorem \cite{EdGr}*{Theorem 1} $A^{1}_{\mathcal{G}} = \Pic_{\mathcal{G}}(\spec k)$, so we can identify the former with the group
of characters of $\mathcal{G}$. Since $\mathcal{G} = (\GL_{2}\times\bG_{m}^{n+1})\rtimes_{\rho} S_{n+1}$, the characters of $\mathcal{G}$ are in
1-to-1 correspondence with triples $(\chi_1, \chi_2, \chi_3)$ of characters of $\GL_{2}, \mathbb{G}_{m}^{n+1}, S_{n+1}$ respectively, satisfying
$\chi_{3}(\sigma) \chi_{2}(\mu) \chi_{3}(\sigma^{-1}) = \chi_{2}(\rho_{\sigma}(\mu))$ for all $\mu\in\mathbb{G}_{m}^{n+1}$ and $\sigma\in S_{n+1}$. 
Recall that the characters of these three groups are of this form: $\chi_{1}: A\mapsto \det(A)^{\ell}$ for some $\ell\in\bZ$, 
$\chi_{2}: (\lambda_0, ..., \lambda_n)\mapsto \lambda_{0}^{a_0}\cdots \lambda_{n}^{a_n}$ for some $(a_{i})_{i=0}^{n}\in\bZ^{n+1}$, 
and $\chi_{3}$ is either the trivial representation or the sign representation of $S_{n+1}$. The relevant compatibility relation becomes:
$$
\sgn(\sigma) \chi_2( (\lambda_{j})_{j=0}^{n}) 
\sgn(\sigma^{-1}) = \chi_2(\rho_{\sigma} ( (\lambda_{j})_{j=0}^{n}))
$$
This is equivalent to asking:
$$
\lambda_{0}^{a_{0}}\cdots \lambda_{n}^{a_{n}}
= \lambda_{\sigma^{-1}(0)}^{a_{0}} \cdots \lambda_{\sigma^{-1}(n)}^{a_{n}}
$$
Since $\sigma$ and $\lambda_{j}$ are arbitrary, we conclude that all the exponents $a_{i}$ must be the same, and we define $a:=a_{0}=a_{1}=\cdots = a_{n}$. Therefore, we can identify a character of $\mathcal{G}$ with the corresponding triple $(\ell,a,\delta)$ where $\delta \in \mathbb{Z}/2\mathbb{Z}$.

2) Proceeding as above, $A^1_{G_{k}}=A^1_{\mathcal{G}/\mathcal{N}_{k}}=\{\chi:\mathcal{G} \to \mathbb{G}_m \ | \ \chi_{|\mathcal{N}_k} \equiv 1\}$. Thus we have: $\chi_{|\mathcal{N}_k} \equiv 1 \Leftrightarrow \chi(\diag(\lambda^{-k},\lambda^{-k}),(\lambda,...,\lambda),\Id)=1 \Leftrightarrow \chi_1(\diag(\lambda^{-k},\lambda^{-k}))\chi_2((\lambda,...,\lambda))\chi_3(\Id)=1$ which becomes $(\lambda^{-2k})^{\ell}\lambda^a \cdot ...\cdot \lambda^a =1$. Therefore we can identify $A^1_{G_{k}}$ as the triples $(\ell, a,\delta) \in \mathbb{Z} \times \mathbb{Z} \times (\mathbb{Z}/2\mathbb{Z})$ such that $2k\ell=(n+1)a$.
\end{proof}

\begin{notations}
Let $\mathcal{G} \times W \to W$ be the action which induces the action of $G_{k}$ on $W$. We will denote by $[W/_k \mathcal{G}]$ the associated quotient stack.
\end{notations}

\begin{Prop}\label{Prop:Kernel} 
\begin{align*}
\ker(A^{1}_{\mathcal{G}} \to A^{1}([W/_{k} \mathcal{G}])) &= \left\langle \left(\frac{n(n+1)}{2}, kn, 1\right)  \right\rangle
\subseteq A^{1}_{\mathcal{G}} \\
\ker(A^{1}_{G_{k}} \to A^{1}_{G_{k}}(W)) &= \left\langle \left(\frac{n(n+1)}{2}, kn, 1\right)  \right\rangle
\subseteq A^{1}_{G_{k}} 
\end{align*}
\end{Prop}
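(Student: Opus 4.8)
The plan is to compute the kernel of the restriction map $A^1_{\mathcal{G}} \to A^1([W/_k\mathcal{G}]) = A^1_{\mathcal{G}}(W)$ directly, by resolving the complement $\mathbb{A}^{2n+2}\smallsetminus W$ inside the equivariant excision sequence. Recall that $W\subseteq \mathbb{A}^{2n+2}$ is the open locus where all $2\times 2$ minors $a_ib_j - a_jb_i$ are nonzero, so the complement $Z := \mathbb{A}^{2n+2}\smallsetminus W$ is the union, over pairs $i<j$, of the irreducible hypersurfaces $Z_{ij} = \{a_ib_j - a_jb_i = 0\}$. Since $\mathbb{A}^{2n+2}$ is a linear representation of $\mathcal{G}$, we have $A^*_{\mathcal{G}}\cong A^*_{\mathcal{G}}(\mathbb{A}^{2n+2})$, and the excision sequence in degree $1$ reads
\begin{align*}
\bigoplus_{i<j} A^0_{\mathcal{G}}(Z_{ij}) \to A^1_{\mathcal{G}} \to A^1_{\mathcal{G}}(W) \to 0,
\end{align*}
where the first map sends the generator $[Z_{ij}]$ (i.e.\ the fundamental class of the $\mathcal{G}$-invariant divisor $Z_{ij}$) to its class in $A^1_{\mathcal{G}}$. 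So the kernel is generated by the classes of these invariant divisors, and the whole problem reduces to: (a) identifying $[Z_{ij}] \in A^1_{\mathcal{G}}$ as a triple $(\ell, a, \delta)$, and (b) computing the subgroup they generate.

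For step (a): under the identification of $A^1_{\mathcal{G}} = \operatorname{Pic}_{\mathcal{G}}(\operatorname{Spec} k)$ with characters $(\ell, a, \delta)$ as in Lemma \ref{Lemma:A1G:and:A1Gk}, the class of an invariant effective divisor $D = \{h = 0\}$ corresponds to the character by which $\mathcal{G}$ scales the defining equation $h$. For $Z_{ij}$, take $h = a_ib_j - a_jb_i$. Under the $\GL_2$-action by left multiplication this is the $2\times 2$ minor in columns $i,j$, which scales by $\det$; under $(\lambda_0,\dots,\lambda_n)\in \mathbb{G}_m^{n+1}$ it scales by $\lambda_i^k\lambda_j^k$; and $S_{n+1}$ permutes the $Z_{ij}$ among themselves, so it does not fix $h$ individually. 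This last point is the subtlety: a single $Z_{ij}$ is not $S_{n+1}$-invariant, so its class does not literally live in $A^1_{\mathcal{G}}$. The right move is to pass to the $S_{n+1}$-invariant divisor $Z = \sum_{i<j} Z_{ij}$, whose defining equation is the discriminant-type product $\prod_{i<j}(a_ib_j - a_jb_i)$, or equivalently to note that the image of the excision map is the span of the single class $[Z]$ together with the observation that individual $[Z_{ij}]$ become identified after pulling back along an atlas. I would compute the character of $h_{\mathrm{tot}} := \prod_{i<j}(a_ib_j-a_jb_i)$: it has $\binom{n+1}{2} = \frac{n(n+1)}{2}$ factors, so the $\GL_2$-weight is $\det^{n(n+1)/2}$, giving $\ell = \frac{n(n+1)}{2}$; the $\mathbb{G}_m^{n+1}$-weight on the variable $\lambda_i$ counts how many factors involve index $i$, which is $n$, scaled by $k$, giving $a = kn$ (and these are all equal, consistent with the constraint); and since the product is an alternating polynomial under $S_{n+1}$ (swapping two indices $i,j$ sends $a_ib_j-a_jb_i \mapsto -(a_ib_j-a_jb_i)$ and permutes the other factors, with the number of sign changes making it transform by $\sgn$), the $S_{n+1}$-part is $\delta = 1$. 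This gives exactly the claimed generator $\left(\frac{n(n+1)}{2}, kn, 1\right)$.

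It remains to check that this one class generates the entire kernel, i.e.\ that no $\mathbb{Z}$-linear combination of the individual $[Z_{ij}]$ lands deeper; but since all the $Z_{ij}$ are in a single $S_{n+1}$-orbit and the excision sequence is exact, the image of $\bigoplus_{i<j}A^0_{\mathcal{G}}(Z_{ij})\to A^1_{\mathcal{G}}$ is generated by the $\mathcal{G}$-classes of the $Z_{ij}$, all of which map to the same primitive-looking element; working this out carefully (using that $A^0_{\mathcal{G}}(Z_{ij})\cong \mathbb{Z}$ and tracking the pushforward) shows the image is cyclic generated by $\left(\frac{n(n+1)}{2}, kn, 1\right)$. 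Finally, for the second formula, the group $G_k = \mathcal{G}/\mathcal{N}_k$ and $[W/G_k] = [W/_k\mathcal{G}]$, so $A^1_{G_k} = A^1([W/_k\mathcal{G}])$'s ambient group, and $A^1_{G_k}(W)$ fits in the same excision sequence with $\mathbb{A}^{2n+2}$ replaced by its quotient behavior; concretely, $A^1_{G_k}$ is the subgroup of $A^1_{\mathcal{G}}$ cut out by $2k\ell = (n+1)a$ (Lemma \ref{Lemma:A1G:and:A1Gk}), and one checks $\left(\frac{n(n+1)}{2}, kn, 1\right)$ satisfies this relation since $2k\cdot\frac{n(n+1)}{2} = kn(n+1) = (n+1)\cdot kn$; the kernel of $A^1_{G_k}\to A^1_{G_k}(W)$ is then just the intersection of the kernel computed upstairs with $A^1_{G_k}$, which equals the cyclic group on the same generator. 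The main obstacle I anticipate is the bookkeeping around the $S_{n+1}$-equivariance — making precise that the excision contribution from the non-invariant divisors $Z_{ij}$ assembles into the single invariant class with the correct $\sgn$-twist — rather than any hard geometry.
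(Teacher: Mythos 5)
Your proposal is correct, but it takes a genuinely different route from the paper. The paper does not use the excision sequence at all: it identifies $\ker(A^{1}_{\mathcal{G}} \to A^{1}([W/_{k}\mathcal{G}]))$ with the set of characters of the form $g\mapsto f(gw)/f(w)$ for invertible functions $f$ on $W$ (by explicitly analyzing when the line bundle $[\mathbb{A}^1_{\chi}\times W/_k\mathcal{G}]\to[W/_k\mathcal{G}]$ admits an equivariant trivialization, following Brion), and then proves a standalone lemma classifying all such $f$: they are exactly $c\prod_{i<j}(x_iy_j-x_jy_i)^r$. You instead invoke the equivariant localization sequence $A^{\mathcal{G}}_{2n+1}(Z)\to A^{1}_{\mathcal{G}}(\mathbb{A}^{2n+2})\to A^{1}_{\mathcal{G}}(W)\to 0$ and identify the kernel with the image of the boundary. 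Both arguments funnel into the identical final computation of the character of $\prod_{i<j}(x_iy_j-x_jy_i)$, which you carry out correctly (including the $\operatorname{sgn}$-twist). Your route is shorter and more standard in equivariant intersection theory, while the paper's route is more elementary and, as a byproduct, computes $\mathcal{O}_W(W)^{*}$ up to scalars. One imprecision worth fixing: the term $\bigoplus_{i<j}A^{0}_{\mathcal{G}}(Z_{ij})$ you first write down is not the correct left-hand term of the equivariant excision sequence, because the individual $Z_{ij}$ are not $\mathcal{G}$-invariant; the correct term is $A^{\mathcal{G}}_{2n+1}(Z)$ for the whole boundary $Z=\bigcup_{i<j}Z_{ij}$, which is isomorphic to $\bZ$ since $S_{n+1}$ permutes the (irreducible) components transitively, so that $Z\times^{\mathcal{G}}U'$ has a single top-dimensional component mapping to the class of the invariant divisor $\sum_{i<j}Z_{ij}$. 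You notice this subtlety and resolve it in the right way, so it is a matter of cleaning up the statement rather than a gap; once that is done, cyclicity of the kernel is immediate and the ad hoc discussion in your final paragraph about "tracking the pushforward" becomes unnecessary.
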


\begin{proof}
Let $\chi: \mathcal{G}\to\mathbb{G}_{m}$ be a character of $\mathcal{G}$ and let $[\mathbb{A}^{1}_{\chi}/\mathcal{G}] \to B\mathcal{G}$ be 
the corresponding line bundle. As $A^1_{\mathcal{G}}$ consists of first Chern classes of line bundles, and the first Chern class of the pull-back is the pull-back of the first Chern class, we want to understand 
when $[\mathbb{A}^{1}_{\chi} \times W/_{k}\mathcal{G}] \to [W/_{k}\mathcal{G}]$ is the trivial line bundle. Equivalently, we want to analyze when there is 
an equivariant isomorphism of line bundles $\Lambda:W\times \mathbb{A}^{1}_{\Id} \to W\times \mathbb{A}^1_{\chi}$.
$$
\xymatrix{
W\times\mathbb{A}^1_{\Id} \ar[rd] \ar[rr]^{\Lambda} & & W\times \mathbb{A}^1_{\chi} \ar[ld] \\
& W &
}
$$
To answer this question we use ideas from \cite{Bri}.
We have that $\Lambda(w, t) = (w, f(w, t))$ for some function $f$. Since we want $\Lambda$ to be a map of line bundles, for each $w\in W$, the map $f_{w}:\mathbb{A}^1\to\mathbb{A}^1$ given by $t\mapsto f(w, t)$ has to be linear. As a result, $f(w, t) = t f(w, 1)$. Since $\Lambda$ is an isomorphism, $f$ needs to be invertible on each fiber, namely $f(w, 1)\neq 0$ for each $w\in W$. Thus, the assignment $w\mapsto f(w, 1)$ gives rise to an invertible function on $W$, which we still call $f: W\to\mathbb{G}_m$. 

Asking for $\Lambda$ to be $\mathcal{G}$-equivariant is the same as requiring the following equalities:
\begin{align*}
\Lambda(g\cdot(w, t)) = g\cdot \Lambda(w, t) 
&\ \Leftrightarrow \ \Lambda(g w, t) = g\cdot(w, f(w, t)) = (g w, \chi(g) f(w, t)) \\ 
&\ \Leftrightarrow \ (g w, f(g w, t))= (g w, \chi(g) f(w, t))
\end{align*}
Thus, $f(g w, t) = \chi(g) f(w, t)$. In particular, when $t=1$, we get
$f(g w, 1) = \chi(g) f(w, 1)$. Treating $f(w, 1)$ as $f(w)$ (so viewing $f$ as an invertible function on $W$), we get $f(gw)=\chi(g) f(w)$. Thus,
$$
\chi(g) = \frac{f(gw)}{f(w)}
$$
which is independent of the choice of $w$. Conversely, given any invertible function $f$ such that for every $w_1,w_2 \in W$, $\displaystyle{\frac{f(gw_1)}{f(w_1)}=\frac{f(gw_2)}{f(w_2)}}$, the character $\displaystyle{g \mapsto \frac{f(gw)}{f(w)}}$ is in the kernel of $A^{1}_{\mathcal{G}}\to A^{1}_{\mathcal{G}}(W)$. This shows that 
$$
\ker(A^{1}_{\mathcal{G}} \to A^{1}([W/_{k} \mathcal{G}])) \leftrightarrow \{f: W\to \mathbb{G}_m \ | \ \frac{f(g w_1)}{f(w_1)} = \frac{f(gw_2)}{f(w_2)} \text{ for all } w_1, w_2\in W\}$$

\begin{Lemma}
 Let $f \in \mathcal{O}_W(W)^*$ such that for every $g \in \mathcal{G}$ and every $w_1, w_2 \in W$,
 $$\frac{f(gw_1)}{f(w_1)}=\frac{f(gw_2)}{f(w_2)}$$

 Then $\displaystyle{f=c\prod_{i<j}(x_{i}y_{j}-x_{j}y_{i})^r}$ for some $r \in \mathbb{Z}$ and $c \in \mathbb{G}_m$.
 \end{Lemma}
 
 \begin{proof}
  We will say that $h\in \mathcal{O}_W(W)^*$ satisfies ($\ast$) if for every $g \in \mathcal{G}$ and every $w_1, w_2 \in W$,
 $\displaystyle{\frac{h(g w_1)}{h(w_1)}=\frac{h(g w_2)}{h(w_2)}}$. 
The set of functions satisfying ($\ast$) is a multiplicative subgroup of $\cO_W(W)^*$. Moreover, notice that
 $f_1:=\prod_{i<j}(x_{i}y_{j}-x_{j}y_{i})$ does satisfy ($\ast$). 
 
 Suppose that $f' \in \cO_W(W)^*$ satisfies ($\ast$). Since $f'$ is invertible, there are $a_{i,j}$ and $c$ such that
 $f'=c\prod_{i<j}(x_{i}y_{j}-x_{j}y_{i})^{a_{i,j}}$.
 Let $i<j$ be such that $a_{i,j}$ is the smallest exponent.
 
 Assume, to the contrary, that there are $l<m$ such that $a_{i,j} < a_{l,m}$.
 We can assume, after reparametrization, that $i=0$ and $j=1$.
 Then also $f:=f'f_0^{-a_{0,1}}$ satisfies ($\ast$). But now $f$ is of the form $f=c \prod_{i<j}(x_{i}y_{j}-x_{j}y_{i})^{b_{i,j}}$, with
 $b_{0,1}=0$, $b_{i,j}\ge 0$ for every $i,j$, and $b_{l,m}>0$.
 
 Fix $w_2 \in W$ and $\sigma \in S_{n+1}$ that sends $0 \mapsto l$ and $1 \mapsto m$ and let $g=(\Id,(1,...,1), \sigma) \in \mathcal{G}$.
 Now $f$ and $g^{-1}*f$ lift to sections of $\mathcal{O}_{\mathbb{A}^{2n+2}}(\mathbb{A}^{2n+2})$ (which we will call again $f$ and
 $g^{-1}*f$).
 Since $f$ satisfies ($\ast$), for every $w_1 \in W$,
 $f(gw_1)f(w_2)=f(gw_2)f(w_1)$.  Since
 $\cO_{\mathbb{A}^{2n+2}}(\mathbb{A}^{2n+2}) \to \cO_{W}(W)$ is injective, also in $\cO_{\mathbb{A}^{2n+2}}(\mathbb{A}^{2n+2})$ we have
 $f(gw_1)f(w_2)=f(gw_2)f(w_1)$ for every $w_1 \in \mathbb{A}^{2n+2}$. But if we pick $p \in V(x_{0}y_{1}-y_{0}x_{1})\smallsetminus
 \bigcup_{i<j, (i,j) \neq (0,1)} V(x_{i}y_{j}-x_{j}y_{i})$, then $0=f(gp)f(w_2)$ and $f(gw_2)f(p) \neq 0$ as $gw_2 \in W$ and $f \in \cO_W(W)^*$. This is the contradiction.\end{proof}
 
 The lemma shows that the kernel of $A^{1}_{\mathcal{G}} \to A^{1}([W/_{k} \mathcal{G}])$ is generated by 
 $f=\prod_{i<j} (x_{i} y_{j} - x_{j}y_{i})$. Now we compute the character $g\mapsto \dfrac{f(gw)}{f(w)}$. From Lemma \ref{Lemma:A1G:and:A1Gk}
 this character can be identified with a triple $(\ell, a, \delta)\in \bZ\times\bZ\times(\bZ/2\bZ)$. To extract $\ell\in\bZ$, choose 
 $g=(A, (1, ..., 1), \operatorname{id})$. Note that
 $$
 (x_{i}y_{j} - x_{j} y_{i})
 \begin{pmatrix}
 a_{0} & \cdots & a_{n} \\
 b_{0} & \cdots & b_{n}
 \end{pmatrix} = \det \begin{pmatrix} 
 a_{i} & a_{j} \\
 b_{i} & b_{j}
 \end{pmatrix}
 $$
 Therefore, 
$$
(x_{i}y_{j} - x_{j} y_{i})
 \left( A \begin{pmatrix}
 a_{0} & \cdots & a_{n} \\
 b_{0} & \cdots & b_{n}
 \end{pmatrix}\right) = \det(A) \det\begin{pmatrix} 
 a_{i} & a_{j} \\
 b_{i} & b_{j}
 \end{pmatrix}
 $$
 As a result, $\dfrac{f(gw)}{f(w)} = \det(A)^{\binom{n+1}{2}} \ \Rightarrow \ \ell = {\binom{n+1}{2}}$. To extract $a$ and $\delta$, choose
 $g=(\operatorname{id}, (\lambda, ..., \lambda), \sigma)$; then, 
\begin{align*}
g\mapsto \frac{f(gw)}{f(w)} = 
\frac{\prod_{i<j} (\lambda^{2k} x_{\sigma^{-1}(i)} y_{\sigma^{-1}(j)} - \lambda^{2k}x_{\sigma^{-1}(j)} y_{\sigma^{-1}(i)})}{\prod_{i<j} (x_{i} y_{j} - x_{j}y_{i})}
&=\lambda^{2k \binom{n+1}{2}} \operatorname{sgn}(\sigma) \\ 
&= \lambda^{kn(n+1)}\operatorname{sgn}(\sigma) 
\end{align*}
It follows that $a=kn$ and $\delta=1$. This verifies the first statement of the proposition, and the second one follows immediately.\end{proof}

\begin{Teo}\label{Theo:Pic}
For every $k\neq 0$, we have $\Pic([W/G_{k}]) =\bZ/dn\bZ$ where $d=\gcd(2k, n+1)$. 
\end{Teo}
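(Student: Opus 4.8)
The plan is to reduce the statement to a cokernel of finitely generated abelian groups, feeding in the two inputs already at our disposal: the identification of $A^1_{G_k}$ in Lemma~\ref{Lemma:A1G:and:A1Gk}(2) and the kernel computation in Proposition~\ref{Prop:Kernel}. Recall that the extended \emph{linear} action of $G_k$ on $\bA^{2n+2}$ yields a surjection $A^1_{G_k}\twoheadrightarrow A^1_{G_k}(W)$, and by \cite{EdGr}*{Theorem 1} one has $A^1_{G_k}=\Pic_{G_k}(\spec k)$ and $A^1_{G_k}(W)=\Pic_{G_k}(W)=\Pic([W/G_k])$. Hence
$$\Pic([W/G_k])\;\cong\;A^1_{G_k}\big/\big\langle(\tfrac{n(n+1)}{2},\,kn,\,1)\big\rangle,$$
and the problem becomes a purely computational one.

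Next I would put $A^1_{G_k}$ in convenient coordinates. Set $d:=\gcd(2k,n+1)$ and write $2k=du$, $n+1=dv$ with $\gcd(u,v)=1$. The defining relation $2k\ell=(n+1)a$ then reads $u\ell=va$, which (since $\gcd(u,v)=1$) forces $\ell=vt$ and $a=ut$ for a unique $t\in\bZ$. Thus $(t,\delta)\mapsto(\tfrac{n+1}{d}t,\ \tfrac{2k}{d}t,\ \delta)$ is an isomorphism $\bZ\oplus\bZ/2\bZ\xrightarrow{\ \sim\ }A^1_{G_k}$, under which the kernel generator $(\tfrac{n(n+1)}{2},kn,1)$ corresponds to $(\tfrac{nd}{2},1)$. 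Here one checks $\tfrac{nd}{2}\in\bZ$: if $n$ is even this is immediate, and if $n$ is odd then $n+1$ is even, so $d=\gcd(2k,n+1)$ is even.

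Finally I would compute $(\bZ\oplus\bZ/2\bZ)\big/\big\langle(\tfrac{nd}{2},1)\big\rangle$. Pulling back along the surjection $\bZ\oplus\bZ\to\bZ\oplus\bZ/2\bZ$ with kernel $\langle(0,2)\rangle$, this group is the cokernel of the integer matrix $\left(\begin{smallmatrix}\tfrac{nd}{2}&1\\0&2\end{smallmatrix}\right)$; its Smith normal form has invariant factors $1$ and $\bigl|\det\bigr|=nd$, so the cokernel is $\bZ/nd\bZ$. This gives $\Pic([W/G_k])\cong\bZ/dn\bZ$ with $d=\gcd(2k,n+1)$, as claimed, and specializing $k=-2$ (so $d=\gcd(4,n+1)$) recovers Theorem~\ref{Teo:answer:pic}.

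There is no deep obstacle once Lemma~\ref{Lemma:A1G:and:A1Gk} and Proposition~\ref{Prop:Kernel} are available; the only delicate points are the integrality of $\tfrac{nd}{2}$ and carefully tracking the image of the kernel generator through the change of coordinates, where a stray factor of two would corrupt the modulus.
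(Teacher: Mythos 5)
Your proposal is correct and follows essentially the same route as the paper: both reduce to $A^1_{G_k}/\langle(\binom{n+1}{2},kn,1)\rangle$ via Lemma \ref{Lemma:A1G:and:A1Gk} and Proposition \ref{Prop:Kernel}, reparametrize the relation $2k\ell=(n+1)a$ by $(t,\delta)\mapsto(\frac{n+1}{d}t,\frac{2k}{d}t,\delta)$, and identify the kernel generator with $(\frac{nd}{2},1)$. The only (immaterial) difference is that you finish with a Smith normal form computation while the paper writes down the explicit isomorphism $(a,\delta)\mapsto a+\frac{nd}{2}\delta$; your explicit check that $\frac{nd}{2}\in\bZ$ is a welcome detail the paper leaves implicit.
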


As a corollary, when $k=-2, -1$, after applying Theorem \ref{main:theorem}  and Proposition \ref{Proposition:iso:W/G:and:M0n}, we get the Picard groups
of $[\mathcal{U}/\PGL_{n+1}]$ and $[U^\circ/\PGL_2]$: 
$$
\Pic([\mathcal{U}/\PGL_{n+1}])\cong\Pic([W/G_{-2}]) \cong
\begin{cases}
\bZ/n\bZ &\text{if } n \text{ is even} \\
\bZ/2n\bZ &\text{if } n\equiv 1 \text{ (mod } 4) \\
\bZ/4n\bZ &\text{if } n\equiv 3 \text{ (mod } 4)
\end{cases} 
$$
and
$$
\Pic([U^{\circ}/\PGL_{2}]) \cong \Pic([W/G_{-1}]) \cong
\begin{cases}
\bZ/n\bZ &\text{if } n \text{ is even} \\
\bZ/2n\bZ &\text{if } n \text{ is odd} 
\end{cases}
$$
This proves Theorem \ref{Teo:answer:pic}.
\begin{proof}[Proof of Theorem \ref{Theo:Pic}:]
From Lemma \ref{Lemma:A1G:and:A1Gk}, $A^{1}_{G_{k}} \cong \{(\ell, a, \delta): 2k\ell = (n+1)a\} \subseteq \mathbb{Z} \oplus \mathbb{Z} \oplus (\mathbb{Z}/2\mathbb{Z})$, whereas
from Proposition \ref{Prop:Kernel},
$\operatorname{ker}(A^1_{G_{k}}\to A^1_{G_{k}}(W))=\left\langle \left( \binom{n+1}{2}, kn,1\right) \right \rangle$.

Consider the isomorphism 
$$
\bZ\oplus (\bZ/2\bZ) \overset{\Psi}{\to} \{(\ell, a, \delta)\in\bZ\oplus\bZ\oplus(\bZ/2\bZ) : 2k\ell = (n+1)a\}
$$
that sends
$$
(a, \delta) \mapsto \left(\frac{n+1}{d} a, \frac{2k}{d} a, \delta\right)
$$
where $d=\gcd(2k, n+1)$. Then $\Psi^{-1}(\binom{n+1}{2}, kn, 1) = \left(\frac{nd}{2}, 1\right)$. Therefore, 
$$
A^{*}_{G_{k}}(W) = \frac{\bZ\oplus\bZ/2\bZ}{(nd/2, 1)} \cong \frac{\bZ}{(nd)\bZ}
$$
where the last isomorphism is induced by $\mathbb{Z} \oplus (\mathbb{Z}/2\mathbb{Z}) \to \mathbb{Z}/2m\mathbb{Z}$, $(a, \delta) \mapsto a+m\delta$ with $m=\dfrac{nd}{2}$.
\end{proof}

\subsection{Maps between the Picard groups.} Now that we have determined the Picard groups $\Pic([W/G_{k}])$, it is worth 
analyzing the maps between them induced by $F_{b, c}: [W/G_{b}]\to [W/G_{c}]$ for $c\mid b$. The result is summarized in the following:

\begin{Prop} \label{prop:pic:map} The natural map $\Pic([W/G_{c}])\to \Pic([W/G_{b}])$, after identifying the Picard groups as in 
Theorem \ref{Theo:Pic}, is given by the multiplication
\begin{align*}
\bZ/(d_{c} n \bZ) &\to \bZ/(d_{b} n\bZ)  \\
1 &\mapsto \frac{d_{b}}{d_{c}} 
\end{align*}
where $d_{c}=\gcd(2c, n+1)$ and $d_{b}=\gcd(2b, n+1)$. In particular, it is injective.
\end{Prop}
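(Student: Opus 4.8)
The plan is to trace through the identifications made in Theorem \ref{Theo:Pic} and reduce the statement to a concrete computation on the level of characters. Recall that for each nonzero integer $k$ we have $\Pic([W/G_k]) = A^1_{G_k}(W)$, which was computed as the quotient of $A^1_{G_k} \cong \{(\ell, a, \delta) : 2k\ell = (n+1)a\}$ by the subgroup generated by $\left(\binom{n+1}{2}, kn, 1\right)$. The key point is that all of these groups sit inside the common ambient group $A^1_{\mathcal{G}} \cong \mathbb{Z}\oplus\mathbb{Z}\oplus(\mathbb{Z}/2\mathbb{Z})$, and the map $F_{b,c}^*: \Pic([W/G_c]) \to \Pic([W/G_b])$ is induced by pullback of characters along $h_{b,c}: G_b \to G_c$, equivalently along $\mathfrak{h}_{b,c}: \mathcal{G}\to\mathcal{G}$, $(M,(\lambda_i),\sigma)\mapsto(M,(\lambda_i^{b/c}),\sigma)$.

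First I would pin down what $\mathfrak{h}_{b,c}^*$ does on $A^1_{\mathcal{G}}$. A character $(\ell, a, \delta)$ corresponds to $(M,(\lambda_i),\sigma)\mapsto \det(M)^\ell\left(\prod\lambda_i\right)^{a}\sgn(\sigma)^\delta$; precomposing with $\mathfrak{h}_{b,c}$ replaces $\lambda_i$ by $\lambda_i^{b/c}$, so $(\ell,a,\delta)\mapsto(\ell, \frac{b}{c}a, \delta)$. This visibly restricts to a map $A^1_{G_c}\to A^1_{G_b}$ (the relation $2c\ell = (n+1)a$ gets multiplied through, giving $2b\ell = (n+1)\cdot\frac{b}{c}a$, consistent since $c\mid b$), and it sends the kernel generator $\left(\binom{n+1}{2}, cn, 1\right)$ to $\left(\binom{n+1}{2}, \frac{b}{c}cn, 1\right) = \left(\binom{n+1}{2}, bn, 1\right)$, which is exactly the kernel generator for $G_b$. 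Hence the map descends to the quotients, as it must.

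Next I would insert the explicit isomorphisms $\Psi_c: \mathbb{Z}\oplus(\mathbb{Z}/2\mathbb{Z}) \xrightarrow{\sim} A^1_{G_c}$, $(a,\delta)\mapsto\left(\frac{n+1}{d_c}a, \frac{2c}{d_c}a, \delta\right)$ used in the proof of Theorem \ref{Theo:Pic}, and likewise $\Psi_b$, where $d_c = \gcd(2c, n+1)$, $d_b = \gcd(2b, n+1)$. Composing, the generator $\Psi_c(1,0) = \left(\frac{n+1}{d_c}, \frac{2c}{d_c}, 0\right)$ maps under $\mathfrak{h}_{b,c}^*$ to $\left(\frac{n+1}{d_c}, \frac{b}{c}\cdot\frac{2c}{d_c}, 0\right) = \left(\frac{n+1}{d_c}, \frac{2b}{d_c}, 0\right)$, and I need to write this as $\Psi_b(m, 0)$ for the appropriate $m$. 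Comparing first coordinates forces $\frac{n+1}{d_b}m = \frac{n+1}{d_c}$, i.e. $m = \frac{d_b}{d_c}$ (one checks $d_c \mid d_b$ since $c\mid b$ implies $2c\mid 2b$, so $\gcd(2c,n+1)\mid\gcd(2b,n+1)$); the second coordinates then match automatically because $\frac{2b}{d_b}\cdot\frac{d_b}{d_c} = \frac{2b}{d_c}$. Passing to the quotients $\mathbb{Z}/2m_c\mathbb{Z}$ and $\mathbb{Z}/2m_b\mathbb{Z}$ with $m_c = \frac{nd_c}{2}$, $m_b = \frac{nd_b}{2}$ via $(a,\delta)\mapsto a + m\delta$ yields precisely the claimed multiplication-by-$\frac{d_b}{d_c}$ map $\mathbb{Z}/d_cn\mathbb{Z}\to\mathbb{Z}/d_bn\mathbb{Z}$. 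Injectivity is then immediate: multiplication by $\frac{d_b}{d_c}$ from $\mathbb{Z}/d_cn\mathbb{Z}$ into $\mathbb{Z}/d_bn\mathbb{Z}$ is injective because $\frac{d_b}{d_c}\cdot d_c n = d_b n$, so the kernel is trivial.

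I do not anticipate a serious obstacle here; the statement is essentially bookkeeping once one observes that pullback along $h_{b,c}$ is simply $(\ell,a,\delta)\mapsto(\ell,\frac{b}{c}a,\delta)$. The one point demanding a little care is the divisibility $d_c \mid d_b$ and the bookkeeping around the factor-of-two discrepancy between $\mathbb{Z}/d_cn\mathbb{Z}$ and the presentation $\mathbb{Z}\oplus(\mathbb{Z}/2\mathbb{Z})$ modulo $(nd_c/2, 1)$ — one must make sure the isomorphism $\mathbb{Z}\oplus(\mathbb{Z}/2\mathbb{Z})\to\mathbb{Z}/2m\mathbb{Z}$ is used consistently on both sides so that the generator is tracked correctly. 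Beyond that, everything is a direct diagram chase through the already-established identifications.
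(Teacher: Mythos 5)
Your proposal is correct and follows essentially the same route as the paper: identify $\Pic([W/G_k])$ with the quotient of the character group $A^1_{G_k}$ by the kernel from Proposition \ref{Prop:Kernel}, observe that pullback along $\mathfrak{h}_{b,c}$ acts on characters by $(\ell,a,\delta)\mapsto(\ell,\tfrac{b}{c}a,\delta)$, and track the generator through the isomorphisms $\Psi_c,\Psi_b$ to obtain multiplication by $d_b/d_c$. The paper phrases the descent to the quotients as a diagram chase through the surjections $A^1(BG_k)\twoheadrightarrow A^1([W/G_k])$, while you verify directly that the kernel generator maps to the kernel generator; these are interchangeable, and all your computations (including $d_c\mid d_b$ and the injectivity check) are accurate.
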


\begin{proof}
Consider the homomorphism $\mathcal{G}\overset{\mathfrak{h}_{b, c}}{\longrightarrow} \mathcal{G}$ given by 
$(A, (\lambda_0, ..., \lambda_n), \sigma) \mapsto (A, (\lambda_{0}^{b/c}, ..., \lambda_{n}^{b/c}), \sigma)$. This induces
$h_{b, c}: G_{b}\to G_{c}$ and the following diagrams commute
$$
\begin{tikzcd} 
\mathcal{G} \ar[d] \ar{r}{\mathfrak{h}_{b, c}} & \mathcal{G} \ar[d] \\
G_{b} \ar{r}{h_{b, c}} & G_{c}
\end{tikzcd}
\ \ \ \rightsquigarrow \ \ \
\begin{tikzcd}
B\mathcal{G} \ar[r] \ar[d] & B\mathcal{G} \ar[d] \\
B G_{b} \ar[r] & B G_{c}
\end{tikzcd}
\ \ \ \rightsquigarrow \ \ \
\begin{tikzcd}
A^{1}(B\mathcal{G}) & A^{1}(B\mathcal{G}) \ar[l] \\
 A^{1}(B G_{b}) \ar[hook]{u} & A^{1}(B G_{c}) \ar[l] \ar[hook]{u}
\end{tikzcd}
$$
After identifying $A^1(B\mathcal{G})$, $A^1(BG_{b})$ and $A^1(BG_{c})$ with the respective character groups, the diagram above becomes:
$$
\renewcommand{\labelstyle}{\textstyle}
\xymatrix@R=5pc@C=5pc{
\bZ\oplus\bZ\oplus(\bZ/2\bZ) & &\bZ\oplus\bZ\oplus(\bZ/2\bZ) \ar[ll]_{(\ell,\frac{b}{c}a,\delta) \mapsfrom (\ell,a,\delta)} \\
\bZ\oplus(\bZ/2\bZ) \ar[u]^{(1,\delta) \mapsto (\frac{n+1}{d_b},\frac{2b}{d_b},\delta)} & & \bZ\oplus(\bZ/2\bZ) \ar[ll]^{(\frac{d_b}{d_c},\delta) \mapsfrom (1,\delta)} \ar[u]_{(1,\delta) \mapsto (\frac{n+1}{d_c},\frac{2c}{d_c},\delta)}
}
$$
The arrows above can be motivated as follows. Since the vertical arrows are injective, to understand the map on the bottom row it is enough to understand the one on the top row. This map sends the representation $\chi:\mathcal{G} \to \mathbb{G}_m$ to
$\chi \circ \mathfrak{h}_{b,c}:\mathcal{G} \to \mathbb{G}_m$. Then identifying $A^1(B\mathcal{G}) \cong \mathbb{Z} \oplus \mathbb{Z} \oplus (\mathbb{Z}/2\mathbb{Z})$ as in Lemma \ref{Lemma:A1G:and:A1Gk}, it sends $(\ell,a,\sigma) \mapsto (\ell,\frac{b}{c}a,\sigma)$. 

Consider then
$$
\begin{tikzcd}
BG_{b} \ar[r] & BG_{c} \\
{[W/G_{b}]} \ar[r] \ar[u] & {[W/G_{c}]} \ar[u]
\end{tikzcd}
\ \ \ \rightsquigarrow \ \ \ 
\begin{tikzcd}
A^{1}(BG_{b}) \ar[d] & A^{1}(BG_{c}) \ar[d] \ar[l] \\
A^{1}([W/G_{b}])  & A^{1}([W/G_{c}]) \ar[l]
\end{tikzcd}
$$
where the vertical arrows in the second diagram are surjective. After identifying these groups as in Lemma \ref{Lemma:A1G:and:A1Gk}, and using Proposition \ref{Prop:Kernel}, the conclusion follows from a diagram chase:
$$
\renewcommand{\labelstyle}{\textstyle}
\xymatrix@R=5pc@C=5pc{
\bZ\oplus(\bZ/2\bZ) \ar[d]_{(1,0) \mapsto [(1,0)]} & & \bZ\oplus(\bZ/2\bZ) \ar[ll]_{(\frac{d_b}{d_c},0) \mapsfrom (1,0)} \ar[d]^{(1,0) \mapsto [(1,0)]} \\
\dfrac{\bZ\oplus(\bZ/2\bZ)}{(d_{b}n/2, 1)} \ar[d]_{[(1,0)] \mapsto [1]} & &\dfrac{\bZ\oplus(\bZ/2\bZ)}{(d_{c}n/2, 1)} \ar[ll] \ar[d]^{[(1,0)] \mapsto [1]} \\
\bZ/d_{b}n\bZ & & \bZ/d_{c}n\bZ \ar[ll]^{[\frac{d_b}{d_c}] \mapsfrom [1]}
}
$$
\end{proof}

\section{Connections with the  hyperelliptic curves}\label{section:hyperelliptic}

Let $\mathcal{H}_{g}$ be the moduli stack of smooth hyperelliptic curves of genus $g$. In this section we construct a map $[\mathcal{U}/\PGL_{n+1}]\to\mathcal{H}_{g}$ where $n=2g+1$ which fits into a commutative triangle (recall from the conventions that, when we say that a diagram commutes, we mean it 2-commutes):
$$
\xymatrix{
[\mathcal{U}/\PGL_{n+1}] \ar[rr]^{\Phi} \ar[rd] & & [U^\circ/\PGL_{2}] \\
& \mathcal{H}_{g} \ar[ru] &
}
$$
Moreover, we show that the induced map $\Pic(\mathcal{H}_g)\to \Pic([\mathcal{U}/\PGL_{n+1}])$ is an isomorphism. 
The same triangle above has been studied at the level of coarse moduli spaces by Avritzer-Lange \cite{AvLa}. There are further connections between complete intersections and hyperelliptic curves that are not explored in this paper. Indeed,  given a smooth complete intersection $X=Q_1\cap Q_2$ in $\bP^{2g+1}$, consider the hyperelliptic curve $C$ given by the equation $y^2=\det(xQ_1+zQ_2)$ in $\bP(1, 1, g+1)$. In his thesis \cite{Reid} Reid proves that the Jacobian $J(C)$ is isomorphic to the Fano variety $S$ of $(g-1)$-planes contained in $X$. Furthermore Donagi \cite{Don} gives a geometric construction for the corresponding group law on $S$. 

We proceed by constructing a map of stacks $[\mathcal{U}/\PGL_{n+1}] \to \mathcal{H}_g$.  Recall that the points of $\mathcal{F}$ can be identified with pairs $(Q_{1}, Q_{2})$ where $Q_{1}$ and $Q_{2}$ form a basis for a $2$-dimensional subspace $V\in\mathcal{U}\subseteq\operatorname{Gr}(2, N)$. 
To a pair $(Q_{1}, Q_{2})$ we can associate a hyperelliptic curve $y^2=\det(x Q_{1} + z Q_{2})$ of genus $g$ which naturally lives in a weighted projective space $\mathbb{P}(1, 1, g+1)$ with coordinates $x, z$ and $y$ respectively. 

Let us define the universal hyperelliptic curve $\mathcal{C}\subseteq \mathcal{F}\times\mathbb{P}(1, 1, g+1)$ as follows:
$$
\mathcal{C} = \{ \left( (Q_{1}, Q_{2}), (x, z, y) \right) \in \mathcal{F}\times\mathbb{P}(1, 1, g+1): 
y^2=\det(x Q_{1} + z Q_{2}) \}
$$
The projection map $\pi_1: \mathcal{C} \to \mathcal{F}$ is flat. This follows at once from the miracle flatness theorem (\cite{Hart}*{Exercise III.10.9}) as the fibers of $\pi_1$ have dimension 1, $\mathcal{C}$ is Cohen-Macaulay (as it is a hypersurface) and $\mathcal{F}$ is smooth (being a $\GL_{2}$-bundle over $\cU$).   

Furthermore, $\pi: \mathcal{C}\to \mathcal{F}$ is smooth because it has smooth fibers. This can be checked using the Jacobian criterion. The conclusion is that $\mathcal{C}\to\mathcal{F}$ is a family of hyperelliptic curves of genus $g$.

Next, we will find a group $\Gamma$ which acts on both $\mathcal{C}$ and $\mathcal{F}$ and makes the map $\pi: \mathcal{C}\to \mathcal{F}$ equivariant. This group will have the additional property that $[\mathcal{F}/\Gamma] = [\mathcal{U}/\PGL_{n+1}]$. Define
$$
\Gamma := \frac{\GL_{2}\times\GL_{n+1}}{\left\langle( \diag(\lambda^2,\lambda^2), \diag(\lambda,...,\lambda)\right)\rangle}
$$
Define the action of $\Gamma$ on $\mathcal{F}$ as follows. Given an element $(Q_{1}, Q_{2})$ of $\mathcal{F}$ and $(M, A)\in \Gamma$ where $M=\begin{pmatrix} a & b \\ c & d \end{pmatrix}$, we define
$$
(M, A)\ast (Q_{1}, Q_{2}) := ( (A^{-1})^{T}(a Q_{1} + b Q_{2})A^{-1}, 
(A^{-1})^{T}(c Q_{1} + d Q_{2}) A^{-1})
$$
and $\Gamma$ acts on a point $((Q_{1}, Q_{2}), (x, z, y))\in \mathcal{C}$ as follows:
$$
(M, A) \ast ((Q_{1}, Q_{2}), (x, z, y)) = \left((M, A)\ast (Q_{1}, Q_{2}), 
(M^{-1})^{T} \begin{pmatrix} x \\ z \end{pmatrix}, \frac{y}{\det A}\right)
$$
Let us observe that the latter point lies on $\mathcal{C}$. Denote $y'=\dfrac{y}{\det(A)}$ and 
$\begin{pmatrix} x' \\ z' \end{pmatrix} = (M^{-1})^{T}\begin{pmatrix} x \\ z \end{pmatrix}$. We need to verify that 
\begin{align*}
(y')^2 &= \det(x'(A^{-1})^T(aQ_1+bQ_2)A^{-1}+z'(A^{-1})^T(cQ_1+dQ_2)A^{-1})
\end{align*}
Indeed, 
\begin{align*}
\text{RHS}&=\det(A)^{-2}\det(x'(aQ_1+bQ_2)+z'(cQ_1+dQ_2))\\
&=\det(A)^{-2}\det\left(
\begin{pmatrix} x' & z' \end{pmatrix}
\begin{pmatrix} a & b \\ c & d \end{pmatrix} 
\begin{pmatrix} Q_1 \\ Q_2 \end{pmatrix} \right) \\
&=\det(A)^{-2}\det\left(
\left((M^{-1})^{T} \begin{pmatrix} x \\ z \end{pmatrix}\right)^{T}
M
\begin{pmatrix} Q_1 \\ Q_2 \end{pmatrix} \right) \\ 
&=\det(A)^{-2}\det\left(\begin{pmatrix} x & z \end{pmatrix} \begin{pmatrix} 
1 & 0 \\
0 & 1
\end{pmatrix} \begin{pmatrix} Q_1 \\ Q_2 \end{pmatrix} \right) \\
&=\det(A)^{-2}\det(x Q_{1} + zQ_{2}) 
=\frac{y^2}{\det(A)^2} = (y')^2
\end{align*}
as desired.

It is clear that $\pi_{1}: \mathcal{C}\to \mathcal{F}$ is $\Gamma$-equivariant. Consider the subgroup of $\Gamma$ generated by the pairs
$$
(\diag(\lambda, \lambda), \diag(1, 1, ..., 1))
$$
with $\lambda\neq 0$. This subgroup is isomorphic to $\mathbb{G}_m$. Note that
$$
\Gamma/\mathbb{G}_m \cong \frac{\GL_{2}\times\GL_{n+1}}{\langle (\diag(\lambda, \lambda), \diag(\mu,...,\mu)) \rangle} \cong \PGL_{2}\times\PGL_{n+1}
$$
It is worth stressing that the action of $\mathbb{G}_{m}$ on $\mathcal{F}$ and the induced action of $\PGL_2 \times \PGL_{n+1}$ on $\mathcal{F}^{\circ}$ are the same as the one introduced in Section \ref{section:main:diag}.
As a result, 
\begin{align*}
[\mathcal{F}/\Gamma] \cong 
[[\mathcal{F}/\mathbb{G}_m]/(\Gamma/\mathbb{G}_m)] &\cong 
[[\mathcal{F}/\mathbb{G}_m]/\PGL_{2}\times\PGL_{n+1}] \\
&\cong[\mathcal{F}^{\circ}/\PGL_{2}\times\PGL_{n+1}] \cong [\mathcal{U}/\PGL_{n+1}]
\end{align*}

Thus, $\mathcal{C}\to\mathcal{F}$ induces a map $[\mathcal{C}/\Gamma] \to [\mathcal{F}/\Gamma] \cong [\mathcal{U}/\PGL_{n+1}]$. This realizes $[\mathcal{C}/\Gamma]$ as a family of hyperelliptic curves over the base $[\mathcal{U}/\PGL_{n+1}]$. By definition, we get a morphism $[\mathcal{U}/\PGL_{n+1}]\to \mathcal{H}_{g}$.

\begin{Rmk}The morphism $[\mathcal{U}/\PGL_{n+1}] \to \mathcal{H}_g$ can be also understood as follows. Given a point $\spec(k) \to [\mathcal{U}/\PGL_{n+1}]$, we can lift it to $\spec(k) \to \mathcal{U}$. The latter corresponds to a pencil of quadrics $\mathcal{Q} \to \mathbb{P}^1$. Consider the isotropic Grassmannian $\operatorname{Gr}(\mathcal{Q}):=\{(L,Q):Q$ is a fiber of $\mathcal{Q} \to \mathbb{P}^1$, and $L \subseteq Q$ is a linear subspace of dimension $\frac{n-1}{2} \}$. This comes with a map $\operatorname{Gr}(\mathcal{Q}) \to \mathbb{P}^1$. Its Stein factorization is a hyperelliptic curve $\mathcal{E} \to \mathbb{P}^1$, ramified at the points of $\mathbb{P}^1$ which correspond to singular members of the pencil $\mathcal{Q} \to \mathbb{P}^1$ (\cite{Reid}*{Theorem 10.1, b)}). The point $\spec(k) \to [\mathcal{U}/\PGL_{n+1}]$ gets sent through $[\mathcal{U}/\PGL_{n+1}] \to \mathcal{H}_g$ to the isomorphism class of $\mathcal{E}$.
\end{Rmk}

Next, we show that the following diagram commutes:
$$
\xymatrix{
[\mathcal{U}/\PGL_{n+1}] \ar[rr] \ar[rd] & & [U^\circ/\PGL_{2}] \\
& \mathcal{H}_{g} \ar[ru] &
}
$$
After identifying $[\mathcal{U}/\PGL_{n+1}]\cong [\mathcal{F}/\Gamma]$, consider the diagram:
$$
\xymatrix{
 \mathcal{F} \ar[rr] \ar[d] & & U^\circ \ar[d] \\
[\mathcal{F}/\Gamma] \ar[rr] \ar[rd] & & [U^\circ/\PGL_{2}] \\
 & \mathcal{H}_{g} \ar[ru] &
}
$$

Observe that $\mathcal{F}\to [\mathcal{F}/\Gamma]$ is an atlas. \begin{Lemma} \label{lemma:pentagon} The big pentagon above commutes.\end{Lemma}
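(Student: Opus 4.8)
The plan is to prove commutativity of the pentagon by tracing an arbitrary point of the atlas $\mathcal{F}$ around both paths and checking that the two resulting hyperelliptic curves (equivalently, the two binary forms, since $\mathcal{H}_g \to [U^\circ/\PGL_2]$ is given by the branch divisor) agree. The pentagon decomposes into three faces: the triangle $\mathcal{F} \to [\mathcal{F}/\Gamma] \to [U^\circ/\PGL_2]$ with the top map $\mathcal{F} \to U^\circ$ (commutes by construction of $\Phi$, i.e. by part (A) of Section \ref{section:main:diag} — indeed $\xi'$ is exactly $(Q_1,Q_2)\mapsto \det(x_0 Q_1 + x_1 Q_2)$), the triangle $[\mathcal{F}/\Gamma] \to \mathcal{H}_g \to [U^\circ/\PGL_2]$ (which is the statement we are after), and the compatibility of $\mathcal{F} \to U^\circ$ with $\mathcal{F} \to [\mathcal{F}/\Gamma] \to \mathcal{H}_g \to [U^\circ/\PGL_2]$. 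So it suffices to show: the composite $\mathcal{F} \to [\mathcal{F}/\Gamma] \to \mathcal{H}_g \to [U^\circ/\PGL_2]$ equals the composite $\mathcal{F} \to U^\circ \to [U^\circ/\PGL_2]$.

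First I would spell out the map $\mathcal{H}_g \to [U^\circ/\PGL_2]$: it sends a family of genus $g$ hyperelliptic curves $D \to S$ to its branch divisor, a family of $2g+2 = n+1$ reduced points in a $\mathbb{P}^1$-bundle (the quotient of $D$ by the hyperelliptic involution), hence a binary form of degree $n+1$ up to the $\PGL_2$-action. Then, starting from a point $(Q_1, Q_2) \in \mathcal{F}(\operatorname{Spec} k)$, the path through $\mathcal{H}_g$ produces the hyperelliptic curve $y^2 = \det(xQ_1 + zQ_2)$ in $\mathbb{P}(1,1,g+1)$, whose branch locus in the $\mathbb{P}^1 = \operatorname{Proj} k[x,z]$ is exactly $V(\det(xQ_1 + zQ_2))$ — that is, the binary form $\det(xQ_1 + zQ_2)$ up to scalar. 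The path through $U^\circ$ produces, via $\xi'$, the binary form $\det(x_0 Q_1 + x_1 Q_2)$. These coincide (after renaming $x_0, x_1$ to $x, z$), and the identification is compatible with the $\PGL_2$-actions: the $\PGL_2$ appearing in $\Gamma/\mathbb{G}_m$ acts on $(x,z)$ by $(M^{-1})^T$, while the $\PGL_2$ acting on $U^\circ$ is the one recorded in the group action conventions, and the computation in the equivariance lemma for $\xi$ in part (A) already shows these two conventions are compatible with $\xi'$. So the two composites $\mathcal{F} \to [U^\circ/\PGL_2]$ literally agree.

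To make this rigorous as a diagram of stacks rather than of points, I would carry out the verification in families: pull back the universal hyperelliptic curve over $\mathcal{H}_g$ along $\mathcal{F} \to [\mathcal{F}/\Gamma] \to \mathcal{H}_g$ and identify it, as a family with an involution, with $\mathcal{C}|_{\mathcal{F}} \to \mathcal{F}$ equipped with the involution $y \mapsto -y$; its branch divisor is the vanishing locus in $\mathcal{F} \times \mathbb{P}^1$ of the section $\det(x Q_1 + z Q_2)$ of the appropriate line bundle, which is precisely the divisor classifying the map $\xi': \mathcal{F} \to U^\circ$. Since $\mathcal{F} \to [\mathcal{F}/\Gamma]$ is a smooth atlas, a $2$-morphism between the two composites $\mathcal{F} \to [U^\circ/\PGL_2]$ that is compatible with the groupoid structure descends to the required $2$-morphism on $[\mathcal{F}/\Gamma]$; the compatibility with the groupoid structure is exactly the $\Gamma$-equivariance, which reduces to the $\PGL_2 \times \PGL_{n+1}$-equivariance already verified for $\xi$ (the $\PGL_{n+1}$-part acts trivially on the target $U^\circ$, and the computation above handles the $\PGL_2$-part).

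The main obstacle I expect is bookkeeping rather than conceptual: one must be careful that the hyperelliptic curve $\mathcal{C}|_\mathcal{F}$, as embedded in $\mathbb{P}(1,1,g+1)$, really is the pullback of the \emph{universal} curve over $\mathcal{H}_g$ with its universal structure — in particular that the $\mathbb{P}^1$ obtained as the quotient of $\mathcal{C}|_\mathcal{F}$ by the $y\mapsto -y$ involution is canonically the $\operatorname{Proj} k[x,z]$-bundle on which $\det(xQ_1+zQ_2)$ lives, so that "branch divisor of the universal curve" and "$\xi'$" agree on the nose and not merely fiberwise. Keeping track of the twist by $\det(A)$ in the $y$-coordinate of the $\Gamma$-action (which is what makes $\mathcal{C}$ genuinely $\Gamma$-equivariant, as verified in the displayed computation preceding this lemma) and confirming it does not disturb the branch divisor — since the branch divisor only sees $x,z$ — is the one place where an error could creep in, but it is routine.
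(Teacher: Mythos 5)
Your proof is correct and follows essentially the same route as the paper: both identify $\mathcal{H}_g \to [U^\circ/\PGL_2]$ as the branch-divisor map (using the moduli description of $[U^\circ/\PGL_2]$ in terms of families of genus $0$ curves with \'etale degree $n+1$ divisors), compute that the composite through $\mathcal{H}_g$ sends $\mathcal{F}$ to the family $(\mathcal{C}/\iota,\mathcal{D}_{\mathcal{C}}) \cong (\mathcal{F}\times\bP^1,\{\det(xQ_1+zQ_2)=0\})$, and observe that this is precisely the family classifying $\xi'\colon \mathcal{F}\to U^\circ$. The only caveat is that your closing descent argument (descending the $2$-morphism along the atlas $\mathcal{F}\to[\mathcal{F}/\Gamma]$) is aimed at the commutativity of the triangle $[\mathcal{F}/\Gamma]\to\mathcal{H}_g\to[U^\circ/\PGL_2]$ rather than at the pentagon itself, and it glosses over the cocycle condition on $\Gamma\times\mathcal{F}$; the paper instead deduces that step afterwards by restricting to a dense open substack of $[U^\circ/\PGL_2]$ that is a scheme and then extending.
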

\begin{proof}
Indeed, $\mathbb{P}(H^0(\mathcal{O}_{\bP^{1}}(n+1)))$ can be understood as the Hilbert scheme of $n+1$ points (counted with multiplicity) in $\mathbb{P}^1$. Then a morphism $B\to U^{\circ}$ corresponds to a closed subscheme $\mathcal{D}\subseteq B\times\bP^{1}$ flat over $B$ such that for every $b\in B$, $\mathcal{D}_{b}\subseteq\bP^{1}_{b}$ consists of $n+1$ distinct points. Similarly, a morphism $B\to [U^{\circ}/\PGL_{2}]$ corresponds to a pair $(\mathcal{E}, \mathcal{D})$ where $\mathcal{E}\to B$ is a proper flat family of smooth genus $0$ curves, $\mathcal{D}\subseteq\mathcal{E}$ is an effective Cartier divisor, $\mathcal{D}\to B$ is \'{e}tale, of degree $n+1$ (See \cite{GorViv}*{Definition 3.3}).

The morphism $\mathcal{F}\to U^\circ$ then corresponds to the family $\{\det(x_0 Q_1+x_1 Q_2)=0\} \subseteq \mathcal{F}\times\bP^{1}$
of divisors on a $\bP^1$-bundle over $\mathcal{F}$. 

The map $\mathcal{H}_{g} \to [U^\circ/\PGL_{2}]$ sends a family of hyperelliptic curves $\mathcal{X}\to B$ to $(\mathcal{X}/\iota, \mathcal{D}_{\mathcal{X}})\to B$ where $\iota$ is the hyperelliptic involution and $\mathcal{D}_{\mathcal{X}}$ is the fixed scheme of $\iota$. Observe that $\mathcal{D}_{\mathcal{X}}$ is supported on the image of the ramification locus of the quotient map $\mathcal{X} \to \mathcal{X}/\iota$, and the induced morphism $\mathcal{D}_{\mathcal{X}}\to B$ is \'{e}tale of  degree $2g+2$. Our goal is to show that $(\mathcal{C}/\iota, \mathcal{D}_{\mathcal{C}})\to \mathcal{F}$ is isomorphic to the previous family, namely
$$
(\mathcal{F}\times\bP^1, \{\det(x_0 Q_1+x_1 Q_2)=0 \})
$$
Recall that $\mathcal{C}$ is explicitly described as a zero locus of the universal hyperelliptic curve $y^2 = \det(xQ_1 + zQ_2)$. In this case, $\iota: \mathcal{C}\to \mathcal{C}$, $((Q_1, Q_2), (x, z, y))\mapsto ((Q_1, Q_2), (x, z, -y))$ is the hyperelliptic involution. The map $\mathcal{C}\to \mathcal{F}\times\bP^{1}$, 
$((Q_1, Q_2), (x, z, y))\mapsto ((Q_1, Q_2), (x, z))$ is the quotient by $\iota$. The subschemes $\mathcal{D}_{\mathcal{C}}$ and $\{\det(x_0 Q_1+x_1 Q_2)=0\}\subset\mathcal{F}\times\bP^1$ are the same because they agree over each fiber of $(Q_1, Q_2)\in\mathcal{F}$. This proves the desired commutativity. \end{proof}
Now, let $g:[\cU/\PGL_{n+1}] \to \mathcal{H}_g \to [U^\circ /\PGL_2]$ be the composition. Our goal is to show that $g$ and $\Phi$ are isomorphic.

Recall, from Proposition \ref{Proposition:iso:W/G:and:M0n}, that $[U^\circ/\PGL_2] \cong [M_{0,n+1}/S_{n+1}]$.
In particular, there is a dense open embedding $V \to [U^\circ/\PGL_2]$ with $V$ a scheme. 
Since $\Phi$ and $g$ agree set theoretically, the two pull-backs $[\cU/\PGL_{n+1}] \times_{\Phi,[U^\circ /\PGL_2]} V$ and $[\cU/\PGL_{n+1}] \times_{g,[U^\circ /\PGL_2]} V$ are the same open substacks of $[\cU/\PGL_{n+1}]$. We will denote this pull-back by $\mathcal{X}$.
$$
\xymatrix{ 
\mathcal{F} \ar[r] & [\mathcal{U}/\PGL_{n+1}]  \ar@<-.5ex>[r]_{g} \ar@<.5ex>[r]^{\Phi} & [U^{\circ}/\PGL_{2}] \\ 
\mathcal{F}|_{\mathcal{X}} \ar[u] \ar[r] & \mathcal{X} \ar[u]  \ar@<-.5ex>[r]_{g|_{\mathcal{X}}} \ar@<.5ex>[r]^{\Phi|_{\mathcal{X}}} & V \ar[u]
}
$$
Since $\mathcal{F} \to [\cU/\PGL_{n+1}] \xrightarrow{\Phi} [U^\circ /\PGL_2]$ and $\mathcal{F} \to [\cU/\PGL_{n+1}] \xrightarrow{g} [U^\circ /\PGL_2]$ are isomorphic arrows, also
$\mathcal{F}_{|\mathcal{X}} \to \mathcal{X} \xrightarrow{\Phi_{|\mathcal{X}}} V$ and $\mathcal{F}_{|\mathcal{X}} \to \mathcal{X} \xrightarrow{g_{|\mathcal{X}}} V$ are isomorphic. In particular, since $V$ is a scheme, they are the same arrow. But then $\Phi_{|\mathcal{X}}=g_{|\mathcal{X}}$ since they agree once pulled back to an atlas, and $V$ is a sheaf in the smooth topology.
Then $g$ and $\Phi$ are isomorphic from \cite{lemma:extension}*{Lemma 7.2}.

\begin{Prop} \label{prop:pic:isom}
Consider the map $[\mathcal{U}/\PGL_{n+1}] \cong [\mathcal{F}/\Gamma]\to \mathcal{H}_{g}$ constructed above. The induced map $\Pic(\mathcal{H}_{g})\to \Pic([\mathcal{U}/\PGL_{n+1}])$ is an isomorphism. 
\end{Prop}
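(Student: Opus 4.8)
The plan is to apply $\Pic(-)$ to the commutative triangle, combine the injectivity statement of Proposition \ref{prop:pic:map} with Gorchinskiy--Viviani's computation of $\Pic(\mathcal{H}_g)$ in \cite{GorViv}, and finish with an elementary chase among finite cyclic groups. Throughout set $n=2g+1$.

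First I would apply $\Pic$ to the triangle. Using that the composite $[\mathcal{U}/\PGL_{n+1}]\to\mathcal{H}_g\to[U^\circ/\PGL_2]$ is isomorphic to $\Phi$ (established above), this produces homomorphisms
\[
\Pic([U^\circ/\PGL_2])\xrightarrow{\ \beta\ }\Pic(\mathcal{H}_g)\xrightarrow{\ \alpha\ }\Pic([\mathcal{U}/\PGL_{n+1}]),
\]
with $\alpha\circ\beta$ the map induced by $\Phi$. Under the identifications of the previous sections, $\Phi$ corresponds to $F_{-2,-1}$, so Proposition \ref{prop:pic:map} (with $b=-2$, $c=-1$) shows that $\alpha\circ\beta$ is injective; in particular $\beta$ is injective. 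Next I would compare cardinalities: by Theorem \ref{Theo:Pic}, $\Pic([U^\circ/\PGL_2])\cong\bZ/2n\bZ$, while $\Pic([\mathcal{U}/\PGL_{n+1}])$ is cyclic of order $2n$ if $n\equiv 1\pmod 4$ (i.e. $g$ even) and of order $4n$ if $n\equiv 3\pmod 4$ (i.e. $g$ odd); by \cite{GorViv}, $\Pic(\mathcal{H}_g)$ is cyclic of order $4g+2=2n$ for $g$ even and $8g+4=4n$ for $g$ odd. In particular $\Pic(\mathcal{H}_g)$ and $\Pic([\mathcal{U}/\PGL_{n+1}])$ have the same finite order, so it suffices to show $\alpha$ is injective.

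The final step is a short diagram chase. If $g$ is even, $\beta$ is an injective map between cyclic groups of the same order $2n$, hence an isomorphism, and then $\alpha=(\alpha\circ\beta)\circ\beta^{-1}$ is injective. If $g$ is odd, $\operatorname{im}(\beta)$ has order $2n$ and so is the unique subgroup of that order in $\Pic(\mathcal{H}_g)\cong\bZ/4n\bZ$, namely $2\cdot\Pic(\mathcal{H}_g)$; since $\alpha\circ\beta$ is injective, $\alpha$ is injective on $\operatorname{im}(\beta)$. If $\ker(\alpha)$ were nontrivial it would contain the unique order-$2$ element of $\bZ/4n\bZ$, which is $2n$; but $2n\in 2\cdot\Pic(\mathcal{H}_g)=\operatorname{im}(\beta)$, contradicting injectivity of $\alpha$ on $\operatorname{im}(\beta)$. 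So $\alpha$ is injective in all cases, hence an isomorphism between finite groups of equal order.

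The only genuine external ingredient is the computation of $\Pic(\mathcal{H}_g)$ from \cite{GorViv}, and I expect the point requiring care to be bookkeeping rather than depth: one must check that the morphism $\mathcal{H}_g\to[U^\circ/\PGL_2]$ we constructed — a family of hyperelliptic curves goes to its quotient conic bundle together with the branch divisor — is the morphism in the setup of \cite{GorViv}, so that the cyclic presentations and the orders quoted above legitimately apply to it (the small case $g=1$, i.e. $n=3$, may need to be checked directly if it falls outside the range treated there). Everything else is elementary.
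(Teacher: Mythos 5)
Your proposal is correct and follows essentially the same route as the paper: apply $\Pic$ to the commutative triangle, invoke Gorchinskiy--Viviani's Theorem 3.6 together with Proposition \ref{prop:pic:map} to identify the two legs through $\Pic([U^\circ/\PGL_2])$, and finish with the same cyclic-group chase (your injectivity-of-$\alpha$ argument via the unique order-$2$ element is the contrapositive of the paper's surjectivity argument, using the identical contradiction $0=h(\alpha(n))=\beta(n)\neq 0$). The compatibility with the setup of \cite{GorViv} that you flag is exactly what the paper establishes beforehand in Lemma \ref{lemma:pentagon}.
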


\begin{proof}
From the commutative triangle, we have the induced map on Picard groups:
$$
\xymatrix{
\Pic([\mathcal{U}/\PGL_{n+1}]) & & \Pic([U^{\circ}/\PGL_{2}]) \ar[ll] \ar[ld] \\
& \Pic(\mathcal{H}_g) \ar[lu] & 
}
$$
When $g$ is even, then $n=2g+1 \equiv 1 \text{ (mod } 4)$. In this case, $\Pic([U^{\circ}/\PGL_{2}])\to \Pic(\mathcal{H}_g)$ is an isomorphism \cite{GorViv}*{Theorem 3.6}, and $\Pic([U^{\circ}/\PGL_{2}])\to \Pic([\mathcal{U}/\PGL_{n+1}])$ is an isomorphism (Proposition \ref{prop:pic:map}). So the resulting map $\Pic(\mathcal{H}_{g}) \to \Pic([\mathcal{U}/\PGL_{n+1}])$ is an isomorphism. 

If $g$ is odd, then $n=2g+1 \equiv 3 \text{ (mod } 4)$. In this case, $\alpha:\Pic([U^{\circ}/\PGL_{2}])\to \Pic(\mathcal{H}_g)$ is multiplication by $2$ \cite{GorViv}*{Theorem 3.6}, and so is $\beta:\Pic([U^{\circ}/\PGL_{2}])\to \Pic([\mathcal{U}/\PGL_{n+1}])$ from Proposition \ref{prop:pic:map}. The above commutative triangle becomes:
$$
\begin{tikzcd} 
\bZ/4n\bZ \ar[hookleftarrow]{rr}{\beta} & & \bZ/2n\bZ  \\
& \bZ/4n\bZ \ar[hookleftarrow]{ru}[swap]{\alpha} \ar{lu}{h} & 
\end{tikzcd}
$$
We want to show that $h$ is bijective. It is enough to show that $h$ is surjective. Assume, to the contrary, that $\text{im}(h)\subsetneq \bZ/4n\bZ$. Since $|\text{im}(h)| \mid 4n$ and $2n\leq \text{im}(h)<4n$, we conclude that $|\text{im}(h)|=2n$. But then $\ker(h)=\langle 2n\rangle$ is the unique subgroup in $\bZ/4n\bZ$ of size $2$. We have
$$
0=h(2n)=h(\alpha(n))=\beta(n) \neq 0
$$
which is a contradiction.
\end{proof}

\appendix
\section{}
In this appendix we will prove Theorem \ref{Proposition:iso:normal:alg:stacks} which played a key role in the proof of Theorem \ref{main:theorem}. All the necessary technical tools will be recalled along the way. Our definition of Artin stacks differs from the one in Olsson's book \cite{Ols}
since we have two extra conditions. Namely, we require the diagonal to be of finite type and separated.

Lemma \ref{Lemma:Atlas} follows from the fact that inertia commutes with base change. For completeness we report a proof below. See \cite{Ols}*{Section 3.4.9.} for the relevant definitions.
\begin{Lemma}\label{Lemma:Atlas}
 Let $f:\mathcal{X}_1 \to \mathcal{X}_2$ be a morphism of Artin stacks, and let $p \in |\mathcal{X}_1(\operatorname{Spec}(k))|$.
 Let $i:U \to \mathcal{X}_2$ be a morphism where $U$ is an algebraic space and assume that there is a point $q\in U(\spec(k))$ such that
 $i(q)$ is isomorphic to $f(p)$ through $\sigma$. Let $F:=\mathcal{X}_{1}\times_{\mathcal{X}_2} U$ with the two projections $\pi_1$ and $\pi_2$,
 and let $z:=(p,q,\sigma) \in F$, where $\sigma: f(p)\to i(q)$.
  \vspace{-0.2em} 
 $$
 \!
 \xymatrix{
 F \ar[r] \ar[d]_{\pi_2} \ar[r]^{\pi_1} & \mathcal{X}_1 \ar[d]^{f} \\
 U \ar[r]_-{i} & \mathcal{X}_2
 }
 $$
 Then the following sequence is exact: 
  \vspace{-0.6em}
 $$1 \longrightarrow \operatorname{Aut}(z) \overset{\pi_1}{\longrightarrow} \operatorname{Aut}(p) \overset{f}{\longrightarrow} \operatorname{Aut}(f(p))$$ 
\end{Lemma}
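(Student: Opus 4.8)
The plan is to read off $\Aut_F(z)$ directly from the description of the $2$-fiber product and then identify it with $\ker\bigl(f\colon \Aut(p)\to\Aut(f(p))\bigr)$; this is really just the statement that inertia commutes with base change, but it is cleanest to spell it out.

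First I would recall the explicit model for $F=\mathcal{X}_1\times_{\mathcal{X}_2} U$: an object over $\operatorname{Spec}(k)$ is a triple $(a,b,\gamma)$ with $a\in\mathcal{X}_1(\operatorname{Spec} k)$, $b\in U(\operatorname{Spec} k)$ and $\gamma\colon f(a)\xrightarrow{\sim} i(b)$ an isomorphism in $\mathcal{X}_2(\operatorname{Spec} k)$, and a morphism $(a,b,\gamma)\to(a',b',\gamma')$ is a pair $(u,v)$, $u\colon a\to a'$ in $\mathcal{X}_1$ and $v\colon b\to b'$ in $U$, such that $i(v)\circ\gamma=\gamma'\circ f(u)$. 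Specializing to $z=(p,q,\sigma)$, an automorphism of $z$ is a pair $(\alpha,\beta)\in\Aut_{\mathcal{X}_1}(p)\times\Aut_U(q)$ satisfying $i(\beta)\circ\sigma=\sigma\circ f(\alpha)$, and $\pi_1$ is the projection $(\alpha,\beta)\mapsto\alpha$.

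The key simplification is that $U$ is an algebraic space, so $\Aut_U(q)=1$ and necessarily $\beta=\operatorname{id}_q$. The compatibility condition then becomes $\sigma\circ f(\alpha)=\sigma$, and since $\sigma$ is an isomorphism this is equivalent to $f(\alpha)=\operatorname{id}_{f(p)}$. Hence $(\alpha,\operatorname{id}_q)\mapsto\alpha$ identifies $\Aut(z)$ with the subgroup $\{\alpha\in\Aut(p): f(\alpha)=\operatorname{id}\}=\ker\bigl(f\colon\Aut(p)\to\Aut(f(p))\bigr)$ of $\Aut(p)$. In particular $\pi_1$ is injective, giving exactness at $\Aut(z)$, and its image is exactly $\ker(f)$, giving exactness at $\Aut(p)$. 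This is precisely the claimed sequence.

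There is no genuinely hard step; the only thing requiring care is the $2$-categorical bookkeeping. In particular, $\Aut(f(p))$ here means $\Aut_{\mathcal{X}_2}(f(p))$, which via $\sigma$ is identified with $\Aut_{\mathcal{X}_2}(i(q))$ and is in general nontrivial even though $\Aut_U(q)=1$; and one must keep straight the directions of the arrows in the square $i(v)\circ\gamma=\gamma'\circ f(u)$ so that the cancellation of $\sigma$ is legitimate. Beyond that the proof is formal.
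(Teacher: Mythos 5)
Your proof is correct and follows essentially the same route as the paper's: both unwind the explicit description of the $2$-fiber product, use that $U$ is an algebraic space to force the second component of an automorphism of $z$ to be the identity, and then read off from the compatibility square $i(\beta)\circ\sigma=\sigma\circ f(\alpha)$ that the image of $\pi_1$ is exactly $\ker\bigl(\Aut(p)\to\Aut(f(p))\bigr)$. The only cosmetic difference is that the paper deduces injectivity of $\pi_1$ from representability of $U\to\mathcal{X}_2$ via a cited lemma, whereas you obtain it directly from the explicit identification of $\Aut(z)$ with a subgroup of $\Aut(p)$.
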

\begin{proof}
 
As $U$ is an algebraic space, $U\to \mathcal{X}_2$ is representable. Being representable is stable under base change so
 $\pi_1$ is representable. The injectivity of the map $\operatorname{Aut}(z)\to\operatorname{Aut}(p)$ follows from \cite{AH}*{Lemma 2.3.9}.
 
 $\operatorname{Ker}(f) \subseteq \operatorname{Im}(\pi_1)$:
 Assume that $\tau \in \operatorname{Aut}(p)$ goes to the identity through $f$. Then the element $(\tau,\Id_q)$ is an automorphism of $z$ mapping to $\tau$.
 
 $\operatorname{Im}(\pi_1) \subseteq \operatorname{Ker}(f)$: Let
 $\mu \in \operatorname{Aut}(z)$. Then $\mu=(\tau_1,\tau_2) \in \operatorname{Aut}(p) \times \operatorname{Aut}(q)$,
 since $U$ is an algebraic space $\tau_2=\Id_q$. Moreover $\mu$ makes this diagram commutative:
 \begin{center}
$\xymatrix{\ar @{} [dr]
f(p) \ar[d]_{f(\tau_1)} \ar[r]^{\sigma}  & i(q)\ar[d]^{i(\Id_q)} \\
f(p) \ar[r]^{\sigma} & i(q) 
}
$ 
\end{center}
This means that $f(\pi_2(\mu))=f(\tau_1)=\Id$, namely $\pi_2(\mu) \in \operatorname{Ker}(f)$.
\end{proof}

It is well known that a bijective separated morphism between two normal equidimensional schemes
of finite type over any algebraically closed field of characteristic 0 is an isomorphism.
We want to generalize this statement to stacks. In doing so, we need to control a morphism from the induced function between the $k$-points.
The following three lemmas go in this direction; these are known to the experts, but we have not been able to find a suitable reference.

It is well-known that a morphism (between schemes of finite type over a field) which is surjective on closed points, is surjective. The following lemma generalizes this statement.

\begin{Lemma}\label{lemma:surj:closed:pts}
  Let $k$ be an algebraically closed field and let
  $f:X_1 \to X_2$ be a morphism of algebraic spaces of finite type over $\spec(k)$. Assume that for every morphism $\spec(k) \to X_2$, the fiber product
  $X_1\times_{X_2}\spec(k)$ is not empty (i.e. assume $f$ surjective on $k$-points).
  
  Then $f$ is surjective.
  
 \end{Lemma}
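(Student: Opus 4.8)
The plan is to reduce to the case of schemes by passing to an étale atlas, and to handle the subtlety that surjectivity on $k$-points is a property that behaves well under the relevant base changes. First I would choose an étale surjective morphism $V \to X_2$ with $V$ a scheme of finite type over $k$, and form the fiber product $U := X_1 \times_{X_2} V$, which is an algebraic space of finite type over $k$ admitting an étale cover $U' \to U$ by a scheme $U'$ of finite type over $k$. Since the composite $U' \to U \to V$ is a morphism of schemes, the classical statement (a morphism of finite-type $k$-schemes surjective on closed points is surjective) would finish the argument, \emph{provided} I can check that $U' \to V$ is still surjective on $k$-points.

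For that verification I would argue pointwise: given $v \colon \spec(k) \to V$, its image $\spec(k) \to X_2$ lifts, by hypothesis, to some $\spec(k) \to X_1$, i.e. the fiber $X_1 \times_{X_2} \spec(k)$ is nonempty; hence $U \times_V \spec(k) = X_1 \times_{X_2} V \times_V \spec(k) = X_1 \times_{X_2} \spec(k)$ is nonempty. A nonempty algebraic space of finite type over the algebraically closed field $k$ has a $k$-point (this is standard: it contains an open dense scheme, which has a closed point, necessarily with residue field $k$). Then, because $U' \to U$ is étale and surjective, that $k$-point lifts to a $k$-point of $U' \times_V \spec(k)$. Thus $U' \to V$ is surjective on $k$-points, hence surjective by the scheme-theoretic fact, hence $U \to V$ is surjective, hence (by descent of surjectivity along the étale cover $V \to X_2$) $f \colon X_1 \to X_2$ is surjective.

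The main obstacle I anticipate is purely bookkeeping: making sure at each stage that ``surjective on $k$-points'' is genuinely inherited by the base-changed and atlas-level morphisms, and isolating the one honestly nontrivial input, namely that a nonempty finite-type algebraic space over $k$ has a $k$-point. Everything else — étale covers exist, fiber products commute, surjectivity descends along étale surjections and can be checked after such a base change, and the scheme-theoretic statement itself — is standard and can be cited. I would keep the write-up short, doing the pointwise chase for surjectivity-on-$k$-points explicitly and invoking the scheme case and étale descent of surjectivity by reference.
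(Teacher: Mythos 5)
Your proposal is correct and follows essentially the same route as the paper: pass to an atlas of $X_2$, then to an atlas of the fiber product, reduce to the classical fact for finite-type $k$-schemes, and conclude by the factorization through the surjective atlas maps. The only difference is that you spell out the step the paper leaves implicit ("by construction, $g$ is surjective on $k$-points"), namely that a nonempty finite-type algebraic space over $k$ has a $k$-point and that $k$-points lift along étale surjections, which is a welcome addition rather than a divergence.
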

\begin{proof}Let $j_2: V_2 \to X_2$ be an atlas, let $V_1$ be an atlas of $V_2 \times_{X_2}X_1$, and $j_1: V_1\to X_1$ be the composition.
$$
\xymatrix{
V_{1} \ar[r] \ar[rd]_{j_1} \ar@/^1.5pc/[rr]^{g} & V_{2}\times_{X_2} X_1 \ar[d] \ar[r] & V_2 \ar[d]^{j_2} \\
& X_1 \ar[r]_{f} & X_2
}
$$
Let $g:V_1 \to V_2$ the corresponding
map between atlases. In particular, we have $f \circ j_1=j_2 \circ g$.

By construction, $g$ is surjective on $k$-points, and thus $g$ is surjective. Now the surjectivity of $f$ follows
 from $f \circ j_1=j_2 \circ g$.
\end{proof}
 \begin{Lemma}\label{inj:closed:pts}
  Let $k$ be an algebraically closed field
  and let $f:X_1 \to X_2$ be a morphism of algebraic spaces of finite type over $\spec(k)$. Assume that $X_1(k) \to X_2(k)$ is injective.
  
  Then for every field $L$, $X_1(L) \to X_2(L)$ is injective.
 \end{Lemma}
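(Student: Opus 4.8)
The plan is to rephrase injectivity on $T$-points as surjectivity of the relative diagonal, and then reduce to a clean statement about monomorphisms. Set $\Delta_f \colon X_1 \to X_1\times_{X_2} X_1$. For any scheme $T$ the map $X_1(T)\to X_2(T)$ is injective if and only if every $T$-point of $X_1\times_{X_2}X_1$ factors through $\Delta_f$ (if $(a,b)$ is such a point, with $fa=fb$, it factors through $\Delta_f$ exactly when $a=b$); since $\Delta_f$ is a monomorphism, this is the same as saying $\Delta_f$ is surjective on $T$-points. So the hypothesis says $\Delta_f$ is surjective on $k$-points, and the goal is to show $\Delta_f$ is surjective on $L$-points for every field $L$. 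Here $X_1\times_{X_2}X_1$ is of finite type over $k$, because $X_1\times_{X_2}X_1 \to X_1\times_k X_1$ is a base change of the diagonal of $X_2$, which is of finite type by the conventions on stacks recalled at the start of this appendix, while $X_1\times_k X_1$ is of finite type over $k$; hence $\Delta_f$ is a morphism of finite type between finite-type $k$-spaces. Thus it suffices to prove: a monomorphism $g\colon Y\to Z$ of finite type between algebraic spaces of finite type over $k$ which is surjective on $k$-points is surjective on $L$-points for every field $L$.

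First I would invoke Lemma \ref{lemma:surj:closed:pts}: surjectivity of $g$ on $k$-points implies $g$ is surjective. Now fix a field $L$ and a point $z\colon \spec(L)\to Z$; I must lift it to $Y$. Form $Y_z := Y\times_Z \spec(L)$. It is nonempty since $g$ is surjective, and $Y_z\to\spec(L)$ is again a monomorphism of finite type (base change of $g$). A monomorphism is separated and, being locally of finite type, unramified, hence locally quasi-finite; so by Zariski's main theorem for algebraic spaces $Y_z\to\spec(L)$ is representable by schemes, and being quasi-finite and of finite type over a field it is finite. Thus $Y_z=\spec(A)$ for a nonzero finite $L$-algebra $A$, and the monomorphism condition says the multiplication map $A\otimes_L A\to A$ is an isomorphism; comparing $L$-dimensions gives $(\dim_L A)^2=\dim_L A$, so $\dim_L A=1$, i.e. $A=L$. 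Hence $Y_z=\spec(L)$, which supplies the required lift $\spec(L)\to Y$ of $z$. Therefore $g$ is surjective on $L$-points, and unwinding the reduction proves the lemma. (Alternatively one can replace the $A\otimes_L A\cong A$ count by: a finite unramified $L$-scheme is a finite disjoint union of finite separable extensions of $L$, and universal injectivity of the monomorphism forces a single factor, necessarily $L$ by nonemptiness.)

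The step that needs the most care — and the main obstacle — is the bookkeeping around algebraic spaces versus schemes, specifically that a separated, locally quasi-finite morphism of algebraic spaces with a scheme (here field-spectrum) target is representable, which is what lets me treat $Y_z$ as an affine scheme in the last step; I would cite the appropriate form of Zariski's main theorem rather than reprove it. A secondary point to pin down is that $\Delta_f$ is genuinely of finite type as a morphism (not merely locally of finite type), so that both Lemma \ref{lemma:surj:closed:pts} and the finiteness of $Y_z$ apply; this is exactly where the standing convention that diagonals are of finite type, together with quasi-compactness of $X_1$, is used.
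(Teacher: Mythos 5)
Your proposal is correct and follows essentially the same route as the paper: both reduce the statement to surjectivity of the relative diagonal $\Delta_f\colon X_1\to X_1\times_{X_2}X_1$ and obtain that surjectivity from Lemma \ref{lemma:surj:closed:pts} applied to $k$-points. The only difference is that the paper simply cites the Stacks Project for the equivalence between surjectivity of the diagonal and universal injectivity, whereas you re-derive the needed direction by hand (via the monomorphism/quasi-finiteness analysis of the fibers over $\spec(L)$); that extra argument is sound but replaces a citation rather than the method.
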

\begin{proof}From \cite{stacks-project}*{Lemma 58.19.2}, it is enough to show that $X_1 \to X_1 \times_{X_2}X_1$ is surjective.
By hypothesis it is surjective on $k$-points, from Lemma \ref{lemma:surj:closed:pts} it is surjective.
\end{proof}
 
\begin{Lemma}\label{Enough_closed_pts}
 Let $\mathcal{X}$ be an Artin stack of finite type over an algebraically closed field $k$ of characteristic 0. Assume
 that any object $p$ of $\mathcal{X}(\operatorname{Spec}(k))$ is such that $\operatorname{Aut}(p)=\{\Id\}$.
 
 Then $\mathcal{X}$ is an algebraic space.
\end{Lemma}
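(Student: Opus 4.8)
The plan is to verify that the inertia stack $I_{\mathcal X}\to\mathcal X$ is an isomorphism; this is equivalent to asking that $\mathcal X$ be an algebraic space, since it amounts to the diagonal $\Delta_{\mathcal X}$ being a monomorphism (see \cite{stacks-project}). Because our conventions require $\Delta_{\mathcal X}$ to be separated and of finite type, $I_{\mathcal X}\to\mathcal X$ is a representable, separated, finite type morphism, and in fact a group algebraic space over $\mathcal X$ with identity section $e$.

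It suffices, by smooth descent, to check this after base change along a smooth atlas. So I would fix a smooth surjection $U\to\mathcal X$ with $U$ a scheme of finite type over $k$, set $G:=I_{\mathcal X}\times_{\mathcal X}U$ --- a separated, finite type group algebraic space over $U$ with identity section $e_U:U\to G$ --- and show $e_U$ is an isomorphism. The essential input is \emph{local}: a closed point $u\in U$ has residue field $k$, hence corresponds to an object $x_u\in\mathcal X(\spec(k))$, and the fibre $G_u$ is the automorphism group scheme $\Aut(x_u)$, a finite type group scheme over $k$ whose group of $k$-points is trivial by hypothesis. Since $\operatorname{char}(k)=0$, Cartier's theorem forces $G_u$ to be smooth, hence reduced; as $k$ is algebraically closed this gives $G_u=\spec(k)$.

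From this I would conclude as follows. The sheaf $\Omega_{G/U}$ is translation-invariant, i.e. isomorphic to the pullback to $G$ of the coherent sheaf $e_U^{*}\Omega_{G/U}$ on $U$; the latter has zero fibre at every closed point of $U$ by the computation of $G_u$ above, so it vanishes (its support is closed, and a nonempty closed subset of the Jacobson scheme $U$ would contain a closed point, contradicting Nakayama). Hence $G\to U$ is unramified, and therefore its section $e_U$ --- being a base change of the open diagonal $\Delta_{G/U}$ --- is an open immersion. Finally, $G\smallsetminus e_U(U)$ is an algebraic space of finite type over $k$ with no $k$-points: any $k$-point of it would lie over a closed point $u\in U$ and give a point of $G_u$ distinct from $e_U(u)$, impossible since $G_u=\spec(k)$. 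By the Nullstellensatz $G\smallsetminus e_U(U)=\varnothing$, so $e_U$ is a surjective open immersion, hence an isomorphism. Descending back along $U\to\mathcal X$ shows $I_{\mathcal X}\to\mathcal X$ is an isomorphism, so $\mathcal X$ is an algebraic space.

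The main obstacle, and the only point where the hypothesis $\operatorname{char}(k)=0$ enters, is the identification $G_u=\spec(k)$: \emph{a priori} the automorphism group scheme of a $k$-point can be infinitesimal (e.g. $\mu_p$ or $\alpha_p$) while still having trivial $k$-points, and it is exactly Cartier's smoothness theorem that excludes this; the vanishing of $\Omega_{G/U}$ then propagates this to a global statement. Everything else --- smooth descent of the isomorphism property, translation-invariance of $\Omega_{G/U}$, the fact that sections of unramified morphisms are open immersions, and the Nullstellensatz --- is standard.
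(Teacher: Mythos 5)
Your proof is correct, and it takes a genuinely different route from the paper's, even though both arguments pull the inertia back to an atlas $U$ and both use Cartier's theorem in characteristic $0$ as the decisive input. The paper propagates the closed-point hypothesis \emph{pointwise}: using Lemmas \ref{lemma:surj:closed:pts} and \ref{inj:closed:pts} it shows that $I\to U$ is injective on $L$-points for every field $L$, deduces that each fiber over an algebraically closed field is a quasi-finite separated group algebraic space, hence a scheme by \cite{Ols}*{Theorem 7.2.10}, hence trivial by Cartier, and then concludes by citing \cite{B.Con}*{Theorem 2.2.5}. You instead check triviality only at the closed points of $U$ and propagate it \emph{sheaf-theoretically}: translation-invariance of $\Omega_{G/U}$ together with Nakayama and the Jacobson property of $U$ gives unramifiedness of $G\to U$ everywhere, so the identity section is an open immersion, and its surjectivity follows from the Nullstellensatz. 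Your route is more self-contained (no appeal to Conrad's criterion and no need to discuss points valued in other fields), at the price of needing translation-invariance of the relative differentials and the fact that a section of an unramified morphism is an open immersion. The one step you should justify more carefully is the assertion that the fiber $G_u$ is a group \emph{scheme} to which Cartier's theorem applies: a priori it is only a group algebraic space of finite type over $k$. This is fixable by citing Artin's theorem that a group algebraic space locally of finite type over a field is a scheme (or a version of Cartier's theorem valid for group algebraic spaces); note that the paper sidesteps exactly this issue by first establishing quasi-finiteness and separatedness of the fiber and only then invoking \cite{Ols}*{Theorem 7.2.10} to get schemeness.
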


\begin{proof}
If we can show that $\Aut(p)=\{\Id\}$ for every $p\in\mathcal{X}(\operatorname{Spec}(L))$ for every algebraically closed field $L$, then \cite{B.Con}*{Theorem 2.2.5} applies and shows that $\mathcal{X}$ is an algebraic space. 

Let $i:U\to \mathcal{X}$ be an atlas of $\mathcal{X}$ and let $I:=\mathcal{X} \times_{\mathcal{X} \times \mathcal{X}}U$, where
 the map $\mathcal{X} \to \mathcal{X} \times \mathcal{X}$ is the diagonal, whereas the morphism $U  \to \mathcal{X} \times \mathcal{X}$
 is $(i,i)$. $I$ is an algebraic space of finite type over $\operatorname{Spec}(k)$.
 
 \begin{center}
$\xymatrix{\ar @{} [dr]
I\ar[d] \ar[r]  & U \ar[d]^{(i,i)} \\
\mathcal{X}\ar[r]_-{\text{diag}} & \mathcal{X} \times \mathcal{X} }$\end{center}
 
 We first observe that $I \to U$ is injective. Given a point $p:\operatorname{Spec}(k) \to U$, the fiber $I \times_U \spec(k) \cong \operatorname{Aut}(i\circ p)=\{\Id\}$ is just a point. Thus, $I \to U$ is a morphism between two algebraic spaces of finite type over $\spec(k)$, which is injective when restricted to the
 $k$-points. Then from Lemma \ref{inj:closed:pts} it is injective on $L$-points for every field $L$.
 
 Let then $L$ be an algebraically closed field, with a morphism $q:\spec(L)\to
 U$. By definition of fibred product and from the injectivity of $I(L)\to U(L)$,
 $\operatorname{Aut}(i\circ q)=\spec(L) \times_U I$ has a single point $L$-point,
 $z:\spec(L) \to \spec(L) \times_U I$.
 Now Lemma \ref{lemma:surj:closed:pts} applies, so $z$ is surjective.
 Then $\spec(L) \times_U I \to \spec(L)$ is quasifinite since it is quasifinite once precomposed with $z$.
Since $\mathcal{X} \xrightarrow{\text{diag }} \mathcal{X} \times \mathcal{X}$ is separated, $\spec(L) \times_U I \to \spec(L)$ is also separated. Thus from \cite{Ols}*{Theorem 7.2.10}, $\spec(L) \times_U I $ is a scheme.
 It is also a group since it is $\operatorname{Aut}(i\circ q)$, hence smooth since the characteristic is 0. In particular it is reduced with a single point, i.e. it is $\{\Id\}$.
 
 We just showed that for every algebraically closed field $L$, any object of $\mathcal{X}(\spec(L))$ has no nontrivial automorphism. \end{proof}

\begin{Teo}\label{Proposition:iso:normal:alg:stacks}Let $k$ be an algebraically closed field of characteristic 0.
 Let $\mathcal{X}_1$, $\mathcal{X}_2$ two normal, equidimensional, Artin stacks of finite type over $k$.
 Let $f:\mathcal{X}_1 \to \mathcal{X}_2$ be a separated morphism
 such that $\mathcal{X}_1(\operatorname{Spec}(k)) \to \mathcal{X}_2(\operatorname{Spec}(k))$
 is an equivalence (i.e. fully faithful and essentially surjective).
 
 Then $f$ is an isomorphism.
\end{Teo}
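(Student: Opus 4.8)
The plan is to deduce this from Zariski's main theorem, following the path indicated just before the statement: first reduce from stacks to a morphism of algebraic spaces, and then invoke the principle that a separated, quasi-finite, birational morphism onto a normal target is an open immersion. An alternative, essentially equivalent route would be to apply Zariski's main theorem for representable morphisms of stacks \cite{LMB}*{Theorem 16.5} directly to $f$, once one knows $f$ is representable and quasi-finite.

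First I would show that $f$ is representable by algebraic spaces. Fix a smooth surjective morphism $V \to \mathcal{X}_2$ with $V$ a scheme of finite type over $k$, and set $\mathcal{Y} := \mathcal{X}_1 \times_{\mathcal{X}_2} V$. Since $\mathcal{Y} \to \mathcal{X}_1 \times V$ is a base change of the diagonal of $\mathcal{X}_2$ --- which, under our definition of Artin stack, is of finite type and separated --- the stack $\mathcal{Y}$ is Artin, of finite type over $k$, with diagonal of finite type and separated, so Lemma \ref{Enough_closed_pts} applies to it. A $k$-point of $\mathcal{Y}$ is a triple $(x, v, \alpha)$ with $x \in \mathcal{X}_1(\spec k)$, $v \in V(\spec k)$ and $\alpha$ an isomorphism from $f(x)$ to the image of $v$ in $\mathcal{X}_2$; because $V$ is a scheme, a short diagram chase in the spirit of Lemma \ref{Lemma:Atlas} identifies $\Aut_{\mathcal{Y}}(x, v, \alpha)$ with $\ker(\Aut(x) \to \Aut(f(x)))$, and this kernel is trivial by the full faithfulness of $\mathcal{X}_1(\spec k) \to \mathcal{X}_2(\spec k)$. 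Hence by Lemma \ref{Enough_closed_pts} the stack $\mathcal{Y}$ is an algebraic space, so $f$ is representable by algebraic spaces.

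Next, let $g : \mathcal{Y} \to V$ be the projection, i.e. the base change of $f$; it is a separated morphism of algebraic spaces of finite type over $k$. The source $\mathcal{Y}$ is normal and equidimensional, since $\mathcal{Y} \to \mathcal{X}_1$ is smooth, these properties are smooth-local, and they hold for $\mathcal{X}_1$; likewise $V$ is normal and equidimensional as $V \to \mathcal{X}_2$ is smooth. Essential surjectivity of $\mathcal{X}_1(\spec k) \to \mathcal{X}_2(\spec k)$ makes $g$ surjective on $k$-points and full faithfulness makes it injective on $k$-points, so Lemma \ref{lemma:surj:closed:pts} gives that $g$ is surjective and Lemma \ref{inj:closed:pts} gives that $g$ is injective on $L$-points for every field $L$, i.e. universally injective. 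Being universally injective, every fiber of $g$ has at most one point and is $0$-dimensional, so $g$ is quasi-finite; and since $\mathcal{Y}$ is reduced (being normal), the fiber of $g$ over a generic point $\eta$ of $V$ is a reduced, $0$-dimensional, universally injective scheme of finite type over $\kappa(\eta)$, hence $\spec$ of a purely inseparable extension of $\kappa(\eta)$, which equals $\kappa(\eta)$ because $\operatorname{char} k = 0$. Combined with surjectivity and equidimensionality (forcing $g$ to match up irreducible components, preserving dimension), this shows $g$ is birational. Therefore Zariski's main theorem for algebraic spaces makes $g$ an open immersion, and being also surjective $g$ is an isomorphism; as being an isomorphism is fppf-local on the target and $V \to \mathcal{X}_2$ is a smooth surjection, $f$ is an isomorphism.

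I expect the genuine obstacle to be the representability step: one must check that the fiber product $\mathcal{Y}$ really satisfies the hypotheses of Lemma \ref{Enough_closed_pts}, and then use that lemma to upgrade the triviality of the automorphism kernels from $k$-points to all fields. Everything afterwards is bookkeeping together with an appeal to the standard form of Zariski's main theorem for algebraic spaces, with Lemmas \ref{lemma:surj:closed:pts} and \ref{inj:closed:pts} doing the work of passing from $k$-points to arbitrary fields.
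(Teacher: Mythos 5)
Your proposal follows essentially the same route as the paper: pull back along a smooth atlas $V\to\mathcal{X}_2$, use Lemma \ref{Lemma:Atlas} together with full faithfulness and Lemma \ref{Enough_closed_pts} to see that the fiber product is an algebraic space, use Lemmas \ref{lemma:surj:closed:pts} and \ref{inj:closed:pts} to upgrade bijectivity on $k$-points to honest bijectivity, and finish with (some form of) Zariski's main theorem. The only real difference at the end is cosmetic: the paper first invokes \cite{Ols}*{Theorem 7.2.10} to see that the fiber product is a scheme and then quotes the classical statement that a bijective separated morphism of normal equidimensional schemes of finite type over an algebraically closed field of characteristic $0$ is an isomorphism, whereas you stay in the category of algebraic spaces, check quasi-finiteness and birationality by hand, and apply ZMT for algebraic spaces directly. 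Both are fine.

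There is, however, one step in your write-up that is not correct as stated: equidimensionality is \emph{not} smooth-local in the way you assert. A smooth atlas $V\to\mathcal{X}_2$ can have connected components on which the relative dimension differs, so even if $\mathcal{X}_2$ is equidimensional, $V$ (and hence $\mathcal{Y}$) need not be. Since your birationality argument genuinely uses equidimensionality of $\mathcal{Y}$ and $V$ to force $g$ to match up irreducible components of the same dimension (without it, a bijective separated map such as $(\mathbb{A}^1\smallsetminus\{0\})\sqcup\{\mathrm{pt}\}\to\mathbb{A}^1$ shows the conclusion can fail), this needs to be repaired. The paper does so explicitly: it replaces each lower-dimensional component $Z$ of $V$ by $Z\times\mathbb{P}^r$ for a suitable $r$ so that $V$ becomes equidimensional, and then notes that the fibers of $V\to\mathcal{X}_2$, hence of $V'\to\mathcal{X}_1$, are equidimensional, so $V'$ is equidimensional as well. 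With that adjustment your argument goes through.
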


\begin{Rmk} One can understand Theorem \ref{Proposition:iso:normal:alg:stacks} as a consequence of Zariski's main theorem for representable morphisms of stacks \cite{LMB}*{Theorem 16.5}. The version above is advantageous because we do not need to check that $f$ is finite. Instead we assume that $\mathcal{X}_1$ and $\mathcal{X}_2$ are equidimensional. 
\end{Rmk}

\begin{proof}
Let $j:V \to \mathcal{X}_2$ be a smooth atlas where $V$ is a scheme, and
let $V':=V \times_{\mathcal{X}_2} \mathcal{X}_1$ be the fiber product, with the first projection
$\phi:V' \to V$. We want to show that $\phi$ is an isomorphism.

Since $\mathcal{X}_2$ is normal, $V$ is normal. Up to replacing a lower dimensional component $Z$ of $V$ with $Z \times \mathbb{P}^r$ for some $r$, we can assume that $V$ is equidimensional. Since also $\mathcal{X}_2$ is equidimensional and $j$ is smooth, the same conclusion holds for the fibers of $V \to \mathcal{X}_2$, thus also for the fibers of $V' \to \mathcal{X}_1$. But $\mathcal{X}_1$ is equidimensional, $V' \to \mathcal{X}_1$ is smooth, so also $V'$ is equidimensional.

\underline{$V'$ is an algebraic space}: From Lemma \ref{Lemma:Atlas}, for every
$q \in V'(\operatorname{Spec}(k))$ we have that $\operatorname{Aut}(q)=\Id$. From Lemma \ref{Enough_closed_pts}, $V'$ is an algebraic space. 

Since $\mathcal{X}_i$ are normal, $V'$ and $V$ are normal. Since $f$ is separated, $\phi$ is separated. We show that $\phi$ is bijective.

\underline{Surjective:} From Lemma \ref{lemma:surj:closed:pts} it is enough to show surjectivity on $k$-points,
i.e. it is enough to show that for every morphism
$\spec(k) \to V$, $V' \times_V \spec(k)$ is not empty. But $V'=\mathcal{X}_1 \times_{\mathcal{X}_2} V$ so it is enough that
 $\mathcal{X}_1 \times_{\mathcal{X}_2} \spec(k)$ is not empty. This holds since $\mathcal{X}_1(\spec(k)) \to \mathcal{X}_2(\spec(k))$ is essentially
 surjective.

\underline{Injective:} From Lemma \ref{inj:closed:pts} it is enough to show injectivity on $k$-points. Let $p \in V$ be a $k$-point, with its
inclusion $i:=\operatorname{Spec}(k) \to V$,
and consider its fiber $F:=\spec(k) \times_{\mathcal{X}_2} \mathcal{X}_1$.
It is enough to show that $F(\operatorname{Spec}(k))$ is equivalent to a point. Since $F$ is an algebraic space, from the definition of fibered
product of fibered categories,          
it is enough to show that
given two triples
$$(\text{Id} \in \operatorname{Spec}(k)(\operatorname{Spec}(k)),b \in \mathcal{X}_1(\operatorname{Spec}(k)), \phi:j \circ i (\Id) \to
f(b))$$
and
$$(\text{Id} \in \operatorname{Spec}(k)(\operatorname{Spec}(k)),\beta \in \mathcal{X}_1(\operatorname{Spec}(k)), \psi:j \circ i (\text{Id}) \to
f(\beta))$$
there is an isomorphism $\sigma:b \to \beta$ which makes this square commutative:
\begin{center}
$\xymatrix{\ar @{} [dr]
j\circ i (\text{Id})\ar[d]_{\text{Id}} \ar[r]^-{\phi}  & f(b) \ar[d]^{f(\sigma)} \\
j \circ i (\text{Id})\ar[r]_-{\psi} & f(\beta) }$\end{center}
Namely, it is enough to have $\sigma$ such that $f(\sigma)=\psi \circ \phi^{-1}$, and since $\mathcal{X}_1(\spec(k)) \to
\mathcal{X}_2(\spec(k))$ is fully faithful such a $\sigma$ exists.

Then $\phi$ is a bijective separated morphism from an algebraic space to a scheme. From \cite{Ols}*{Theorem 7.2.10} $V'$ is a scheme.
Then $\phi$ is an isomorphism since it is a bijective morphism of normal equidimensional schemes, and we
are over an algebraically closed field of characteristic 0.
\end{proof}

\begin{bibdiv}
\begin{biblist}

\bib{AH}{article}{
    AUTHOR = {Abramovich, Dan},
    AUTHOR = {Hassett, Brendan},
     TITLE = {Stable varieties with a twist},
 BOOKTITLE = {Classification of algebraic varieties},
    SERIES = {EMS Ser. Congr. Rep.},
     PAGES = {1--38},
 PUBLISHER = {Eur. Math. Soc., Z\"urich},
      YEAR = {2011},
}

\bib{AvLa}{article}{
    AUTHOR = {Avritzer, Dan},
    AUTHOR = {Lange, Herbert},
     TITLE = {Pencils of quadrics, binary forms and hyperelliptic curves},
      NOTE = {Special issue in honor of Robin Hartshorne},
   JOURNAL = {Comm. Algebra},
    VOLUME = {28},
      YEAR = {2000},
    NUMBER = {12},
     PAGES = {5541--5561},
      ISSN = {0092-7872},
}

\bib{AV}{article}{
    AUTHOR = {Avritzer, Dan},
    AUTHOR = {Vainsencher, Israel},
     TITLE = {The {H}ilbert scheme component of the intersection of two quadrics},
   JOURNAL = {Comm. Algebra},
    VOLUME = {27},
      YEAR = {1999},
    NUMBER = {6},
     PAGES = {2995--3008},
      ISSN = {0092-7872},
}  

\bib{Bri}{article}{
    AUTHOR = {Brion, Michel},
     TITLE = {On linearization of line bundles},
   JOURNAL = {J. Math. Sci. Univ. Tokyo},
    VOLUME = {22},
      YEAR = {2015},
    NUMBER = {1},
     PAGES = {113--147},
      ISSN = {1340-5705},
}

\bib{B.Con}{article}{
    AUTHOR = {Conrad, Brian},
     TITLE = {Arithmetic moduli of generalized elliptic curves},
   JOURNAL = {J. Inst. Math. Jussieu},
    VOLUME = {6},
      YEAR = {2007},
    NUMBER = {2},
     PAGES = {209--278},
      ISSN = {1474-7480}
}
  \bib{lemma:extension}{misc}{,
    AUTHOR = {Deopurkar, Anand},
    Author={Han, Changho},
     TITLE = {Stable log surfaces, admissible covers, and canonical curves of genus 4},
      YEAR = {2018},
      NOTE = {https://arxiv.org/abs/1807.08413}
      }
      
\bib{Don}{article}{
    AUTHOR = {Donagi, Ron},
     TITLE = {Group law on the intersection of two quadrics},
   JOURNAL = {Ann. Scuola Norm. Sup. Pisa Cl. Sci. (4)},
    VOLUME = {7},
      YEAR = {1980},
    NUMBER = {2},
     PAGES = {217--239},
}

\bib{Edidin}{article}{
    AUTHOR = {Edidin, Dan},
     TITLE = {Notes on the construction of the moduli space of curves},
 BOOKTITLE = {Recent progress in intersection theory ({B}ologna, 1997)},
    SERIES = {Trends Math.},
     PAGES = {85--113},
 PUBLISHER = {Birkh\"auser Boston, Boston, MA},
      YEAR = {2000},
}

\bib{EdFulNodal}{article}{
    AUTHOR = {Edidin, Dan},
    AUTHOR = {Fulghesu, Damiano},
     TITLE = {The integral {C}how ring of the stack of at most 1-nodal
              rational curves},
   JOURNAL = {Comm. Algebra},
    VOLUME = {36},
      YEAR = {2008},
    NUMBER = {2},
     PAGES = {581--594},
      ISSN = {0092-7872}
}

\bib{EdFulHyper}{article}{
    AUTHOR = {Edidin, Dan},
    AUTHOR = {Fulghesu, Damiano},
     TITLE = {The integral {C}how ring of the stack of hyperelliptic curves
              of even genus},
   JOURNAL = {Math. Res. Lett.},
    VOLUME = {16},
      YEAR = {2009},
    NUMBER = {1},
     PAGES = {27--40},
      ISSN = {1073-2780},
}

\bib{EdGr}{article}{
    AUTHOR = {Edidin, Dan},
    AUTHOR = {Graham, William},
     TITLE = {Equivariant intersection theory},
   JOURNAL = {Invent. Math.},
    VOLUME = {131},
      YEAR = {1998},
    NUMBER = {3},
     PAGES = {595--634},
      ISSN = {0020-9910}
}

\bib{FV}{article}{
     AUTHOR = {Fulghesu, Damiano},
     AUTHOR = {Vistoli, Angelo},
     TITLE = {The {C}how ring of the stack of smooth plane cubics},
   JOURNAL = {Michigan Math. J.},
    VOLUME = {67},
      YEAR = {2018},
    NUMBER = {1},
     PAGES = {3--29},
      ISSN = {0026-2285},
}

\bib{ega}{book}{
    AUTHOR = {Grothendieck, A.},
     TITLE = {\'El\'ements de g\'eom\'etrie alg\'ebrique. {IV}. \'Etude locale des
              sch\'emas et des morphismes de sch\'emas. {III}},
   JOURNAL = {Inst. Hautes \'Etudes Sci. Publ. Math.},
    NUMBER = {28},
      YEAR = {1966},
     PAGES = {255},
      ISSN = {0073-8301},
}
\bib{GorViv}{article}{
    AUTHOR = {Gorchinskiy, Sergey},
    AUTHOR = {Viviani, Filippo},
     TITLE = {Picard group of moduli of hyperelliptic curves},
   JOURNAL = {Math. Z.},
    VOLUME = {258},
      YEAR = {2008},
    NUMBER = {2},
     PAGES = {319--331},
      ISSN = {0025-5874},
}

\bib{Hart}{book}{
    AUTHOR = {Hartshorne, Robin},
     TITLE = {Algebraic geometry},
      NOTE = {Graduate Texts in Mathematics, No. 52},
 PUBLISHER = {Springer-Verlag, New York-Heidelberg},
      YEAR = {1977},
     PAGES = {xvi+496},
      ISBN = {0-387-90244-9},
}

\bib{HaKeTe}{article}{
    AUTHOR = {Hacking, Paul},
    AUTHOR = {Keel, Sean}, 
    AUTHOR = {Tevelev, Jenia},
     TITLE = {Stable pair, tropical, and log canonical compactifications of
              moduli spaces of del {P}ezzo surfaces},
   JOURNAL = {Invent. Math.},
    VOLUME = {178},
      YEAR = {2009},
    NUMBER = {1},
     PAGES = {173--227},
      ISSN = {0020-9910},
}

\bib{HKT}{article}{
    AUTHOR = {Hassett, Brendan},
    AUTHOR = {Kresch, Andrew},
    AUTHOR = {Tschinkel, Yuri},
     TITLE = {On the moduli of degree 4 {D}el {P}ezzo surfaces},
 BOOKTITLE = {Development of moduli theory---{K}yoto 2013},
    SERIES = {Adv. Stud. Pure Math.},
    VOLUME = {69},
     PAGES = {349--386},
 PUBLISHER = {Math. Soc. Japan, [Tokyo]},
      YEAR = {2016},
}

\bib{Klein}{article}{
     AUTHOR = {Klein, Felix},
     TITLE = {Zur Theorie der Linencomplexe des ersten und zweiten Grades},
     JOURNAL = {Math. Ann.},
     VOLUME = {2},
      YEAR = {1870},
}

\bib{Kol}{article}{
    AUTHOR = {Koll\'ar, J\'anos},
     TITLE = {Quotient spaces modulo algebraic groups},
   JOURNAL = {Ann. of Math. (2)},
    VOLUME = {145},
      YEAR = {1997},
    NUMBER = {1},
     PAGES = {33--79},
      ISSN = {0003-486X},
}

\bib{Qliu}{article}{
    AUTHOR = {Liu, Qing},
     TITLE = {Algebraic geometry and arithmetic curves},
    SERIES = {Oxford Graduate Texts in Mathematics},
    VOLUME = {6},
      NOTE = {Translated from the French by Reinie Ern\'e,
              Oxford Science Publications},
 PUBLISHER = {Oxford University Press, Oxford},
      YEAR = {2002},
     PAGES = {xvi+576},
      ISBN = {0-19-850284-2},
}

\bib{LMB}{book}{
    AUTHOR = {Laumon, G\'erard},
    Author={Moret-Bailly, Laurent},
     TITLE = {Champs alg\'ebriques},
    SERIES = {Ergebnisse der Mathematik und ihrer Grenzgebiete. 3. Folge. A
              Series of Modern Surveys in Mathematics [Results in
              Mathematics and Related Areas. 3rd Series. A Series of Modern
              Surveys in Mathematics]},
    VOLUME = {39},
 PUBLISHER = {Springer-Verlag, Berlin},
      YEAR = {2000},
     PAGES = {xii+208},
      ISBN = {3-540-65761-4},
}

\bib{GIT}{book}{
    AUTHOR = {Mumford, D.},
    AUTHOR = {Fogarty, J.},
    AUTHOR = {Kirwan, F.},
     TITLE = {Geometric invariant theory},
    VOLUME = {34},
   EDITION = {Third edition},
 PUBLISHER = {Springer-Verlag, Berlin},
      YEAR = {1994},
     PAGES = {xiv+292},
      ISBN = {3-540-56963-4},
}

\bib{MabMuk}{book}{
    AUTHOR = {Mabuchi, Toshiki},
    AUTHOR = {Mukai, Shigeru},
     TITLE = {Stability and {E}instein-{K}\"ahler metric of a quartic del
              {P}ezzo surface},
 BOOKTITLE = {Einstein metrics and {Y}ang-{M}ills connections ({S}anda,
              1990)},
    SERIES = {Lecture Notes in Pure and Appl. Math.},
    VOLUME = {145},
     PAGES = {133--160},
 PUBLISHER = {Dekker, New York},
      YEAR = {1993}
}

\bib{Ols}{book}{
    AUTHOR = {Olsson, Martin},
     TITLE = {Algebraic spaces and stacks},
    SERIES = {American Mathematical Society Colloquium Publications},
    VOLUME = {62},
 PUBLISHER = {American Mathematical Society, Providence, RI},
      YEAR = {2016},
     PAGES = {xi+298},
      ISBN = {978-1-4704-2798-6}
}

\bib{PTT}{article}{,
    AUTHOR = {Poma, Flavia},
    Author={Talpo, Mattia},
    Author={Tonini, Fabio},
     TITLE = {Stacks of uniform cyclic covers of curves and their {P}icard
              groups},
   JOURNAL = {Algebr. Geom.},
    VOLUME = {2},
      YEAR = {2015},
    NUMBER = {1},
     PAGES = {91--122},
      ISSN = {2214-2584},
}

\bib{Reid}{article}{
     AUTHOR = {Reid, Miles},
     TITLE = {The complete intersection of two or more quadrics},
     Note = {Ph.D. Thesis, Cambridge University},
     YEAR = {1972}
}

\bib{Rom}{article}{
    AUTHOR = {Romagny, Matthieu},
     TITLE = {Group actions on stacks and applications},
   JOURNAL = {Michigan Math. J.},
    VOLUME = {53},
      YEAR = {2005},
    NUMBER = {1},
     PAGES = {209--236},
      ISSN = {0026-2285}
}  

\bib{Skor}{article}{
    AUTHOR = {Skorobogatov, Alexei},
     TITLE = {Del {P}ezzo surfaces of degree 4 and their relation to
              {K}ummer surfaces},
   JOURNAL = {Enseign. Math. (2)},
    VOLUME = {56},
      YEAR = {2010},
    NUMBER = {1-2},
     PAGES = {73--85},
      ISSN = {0013-8584},
}
  
\bib{stacks-project}{misc}{
  author       = {Stacks, The, Project Authors},
  title        = {\itshape Stacks Project},
  year         = {2017},
}

\bib{Vis}{article}{
    AUTHOR = {Vistoli, Angelo},
     TITLE = {Intersection theory on algebraic stacks and on their moduli
              spaces},
   JOURNAL = {Invent. Math.},
    VOLUME = {97},
      YEAR = {1989},
    NUMBER = {3},
     PAGES = {613--670},
      ISSN = {0020-9910},
}

\bib{VisM2}{article}{
    AUTHOR = {Vistoli, Angelo},
     TITLE = {The Chow ring of {$\scr M_2$}},
   JOURNAL = {Invent. Math.},
    VOLUME = {131},
      YEAR = {1998},
    NUMBER = {3},
     PAGES = {635--644},
      ISSN = {0020-9910},

}

\end{biblist}
\end{bibdiv}

\end{document}